\newcommand{\labelinthm}[1]{%
   \label{temp#1}
   \protected@write \@auxout {}{\string \newlabel{#1}{{\emph{\ref{temp#1}}}{\thepage}{\emph{\ref{temp#1}}}{temp#1}{}} }%
}
\tikzstyle{vertex}=[circle,draw=black,fill=black,inner sep=0,minimum size=5pt,text=white,font=\footnotesize]
\tikzstyle{redvertex}=[circle,draw=red,fill=red,inner sep=0,minimum size=5pt,text=white,font=\footnotesize]
\definecolor{amber}{rgb}{1.0, 0.75, 0.0}
\definecolor{darkgreen}{rgb}{0.18, 0.7, 0.46}
\declaretheorem[name=Theorem,numberwithin=section]{theorem}
\newtheorem{lemma}[theorem]{\bf Lemma}
\newtheorem{claim}[theorem]{\bf Claim}
\newtheorem{proposition}[theorem]{\bf Proposition}
\newtheorem*{theorem*}{\bf Theorem}
\theoremstyle{definition}
\newtheorem{definition}[theorem]{\bf Definition}
\newenvironment{claimproof}{%
  \renewcommand{\qedsymbol}{$\boxdot$}%
  \begin{proof}%
}{%
  \end{proof}%
  \renewcommand{\qedsymbol}{$\square$}%
}
\newcommand\claimproofend{\renewcommand{\qedsymbol}{$\boxdot$}
\end{proof}
\renewcommand{\qedsymbol}{$\square$}
}
\def\eps{\varepsilon}
\def\cN{\mathcal{N}}
\def\bE{\mathbb{E}}
\def\bR{\mathbb{R}}
\def\Pb{\mathbb{P}}
\def\be{\boldsymbol{e}}
\def\bx{\boldsymbol{x}}
\def\by{\boldsymbol{y}}
\def\bz{\boldsymbol{z}}
\DeclareMathOperator{\red}{red}
\DeclareMathOperator{\blue}{blue}
\DeclareMathOperator{\spn}{span}
\title{\vspace{-0.9cm} Gaussian random graphs and Ramsey numbers}
\date{}
\author{Zach Hunter}
\author{Aleksa Milojevi\'c}
\author{Benny Sudakov}
\thanks{Department of Mathematics, ETH Z\"urich, Switzerland. Email: {\tt \{zach.hunter, aleksa.milojevic, benjamin.sudakov\}@math.ethz.ch}. Research supported in part by SNSF grant 200021-228014.}
\begin{document}

\begin{abstract}
We give a simple proof of the recent remarkable exponential improvement for Ramsey lower bounds, obtained by Ma, Shen and Xie. Our key ingredient is an alternative construction based on Gaussian random graphs, which allows us to simplify their analysis significantly. As a consequence of this simpler analysis, we also obtain better quantitative bounds.
\end{abstract}

\maketitle

\section{Introduction}\label{sec:intro}

For positive integers $k, \ell\geq 1$, the \textit{Ramsey number} $R(\ell, k)$ is the smallest integer $n$ such that any edge-coloring of the complete graph $K_n$ using the colors red and blue either contains a copy of $K_\ell$ all of whose edges are red or a copy of $K_k$ all of whose edges are blue. These numbers play a central role in \textit{Ramsey theory}, which is itself an essential branch of combinatorics with a long and rich history, with hundreds of papers published each year. Arguably the most famous problem in Ramsey theory is to determine the growth of Ramsey numbers $R(\ell, k)$ when $k=\ell$, also known as the \textit{diagonal Ramsey numbers}, and more generally when $k=C\ell$. Without attempting to comprehensively survey all of the recent results, let us mention that the last couple of years have brought a series of exciting developments on many long-standing questions related to Ramsey numbers of cliques, see e.g. \cite{BBCGHMST, CGMS, CJMS25, CF21, HHKP25, MSX25, MV24, MV19} and the references therein.

The first quantitative upper bound on $R(\ell, k)$ comes from a classical paper of Erd\H{o}s and Szekeres \cite{ES35} from 1935, and states that $R(\ell, k)\leq \binom{\ell+k-2}{\ell-1}$, thus showing that $R(\ell, \ell)\leq 4^\ell$. After several important improvements by Thomason \cite{Tho88}, Conlon \cite{Con09} and Sah \cite{Sah23}, in a recent breakthrough paper Campos, Griffiths, Morris and Sahasrabudhe \cite{CGMS} showed that $R(\ell, \ell)\leq 4^{(1-\eps)\ell}$ for a positive constant $\eps>0$ (see also \cite{GNWW24} and \cite{BBCGHMST} for further developments).

The study of the lower bounds for the Ramsey numbers had at least equal, if not greater, influence on combinatorics. In the paper which introduced the probabilistic method to combinatorics, Erd\H{o}s \cite{Erd47} in 1947 gave an argument using random graphs which shows $R(\ell, \ell)\geq \frac{\ell}{e\sqrt{2}} 2^{\ell/2}$. Beyond a modest improvement \cite{Spe75} by a factor of $2$ using the Lov\'asz Local Lemma, the progress on this question was extremely slow and Erd\H{o}s' bound stays essentially unchanged to this day. Hence it was a great surprise when Ma, Shen and Xie \cite{MSX25} used a different random graph model to give an exponential improvement to Erd\H{o}s' lower bound for $R(\ell, k)$ when $k=C\ell$ for a constant $C>1$. To obtain this improvement, they used a geometrically defined random graph, whose vertices are random points on a high-dimensional sphere and the colors of the edges are determined by the distances between the points. Because of the complicated geometry of high-dimensional spheres, the analysis of this random graph model is long and technical, spanning many pages.

In this paper, we propose an alternative approach to this problem. Although we draw our inspiration from the work of Ma, Shen and Xie, we use random Gaussian vectors to define our graphs. Of course, in the regime of parameters considered here, the length of the Gaussian random vectors we choose is highly concentrated, so the resulting graph is very similar to the one obtained by placing random points on the sphere. However, the key advantage is that the Gaussian measure is a product measure, that is to say its coordinates are independent. This, along with a couple of further ideas, allows us to significantly simplify and shorten the analysis of such graphs, and to obtain tighter bounds.

In order to state our main theorem, let us briefly recall the classical lower bound on $R(\ell, k)$ coming from Erd\H{o}s' argument. If we color every edge of the graph red with probability $p$ or blue with probability $1-p$, independently of all other edges, then the probability that $\ell$ vertices form a red clique is exactly $p^{\binom{\ell}{2}}$, and the probability that $k$ vertices form a blue clique is $(1-p)^{\binom{k}{2}}$. Thus, the expected number of red $\ell$-cliques is $\binom{n}{\ell}p^{\binom{\ell}{2}}$, and the expected number of blue $k$-cliques is $\binom{n}{k}(1-p)^{\binom{k}{2}}$. If for some positive integer $n$ we have that $\binom{n}{\ell}p^{\binom{\ell}{2}}+\binom{n}{k}(1-p)^{\binom{k}{2}}<1$, then there exists a coloring in which both the number of red $\ell$-cliques and blue $k$-cliques is zero, and therefore we have $R(\ell, k)\geq n$. When $k=C\ell$, the probability $p$ which gives the best lower bound is denoted by $p_C$, and it can be found as the unique solution of the equation $C=\frac{\log p_C}{\log(1-p_C)}$. In this case, the corresponding lower bound is $R(\ell, k)\gtrsim p_C^{-\ell/2}$. In this paper we improve this bound as follows.

\begin{theorem}\label{thm:main}
For any $C>1$, there exists a constant $\eps=\eps(C)>0$ such that, for all sufficiently large $\ell$, we have
{\smaller[0.3]
\[R(\ell, C\ell)\geq (p_C^{-1/2}+\eps)^\ell,\]
}
where $p_C\in (0, 1/2)$ is the unique solution to the equation $C=\frac{\log p_C}{\log (1-p_C)}$.
\end{theorem}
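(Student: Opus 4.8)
The plan is to construct the colouring from Gaussian vectors. Fix a dimension $d$, which will be of order $\ell^{2}$, and a probability $p\in(0,1/2)$ close to $p_C$; both are optimised at the end. Let $X_1,\dots,X_n\in\bR^{d}$ be i.i.d.\ standard Gaussian vectors, pick the threshold $\tau<0$ with $\Pb[\langle X_1,X_2\rangle<\tau]=p$ (since $\langle X_1,X_2\rangle$ is a sum of $d$ independent products it concentrates, and $\tau=-(1+o(1))\sqrt{d}\,\Phi^{-1}(1-p)$), and colour $\{i,j\}$ red when $\langle X_i,X_j\rangle<\tau$ and blue otherwise, so each edge is red with probability exactly $p$. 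By the first moment method it then suffices to choose $n=(p_C^{-1/2}+\eps)^{\ell}$ so that
\[
\binom{n}{\ell}\,\Pb\big[\{1,\dots,\ell\}\text{ red}\big]+\binom{n}{C\ell}\,\Pb\big[\{1,\dots,C\ell\}\text{ blue}\big]<1,
\]
for then with positive probability the colouring has no red $K_\ell$ and no blue $K_{C\ell}$, giving $R(\ell,C\ell)>n$.

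The core of the argument is to estimate these two clique probabilities, and this is exactly where the product structure of the Gaussian measure pays off. For the red clique, the event $\{\langle X_i,X_j\rangle<\tau\ \forall\,i<j\le\ell\}$ forces $\sum_{i<j\le\ell}\langle X_i,X_j\rangle<\binom{\ell}{2}\tau$, a lower-tail event, and applying Markov's inequality to $\exp\!\big(-\lambda\sum_{i<j}\langle X_i,X_j\rangle\big)$ one uses that the relevant moment generating function factorises over the $d$ coordinates: writing $\mathbf{g}\sim N(0,I_\ell)$ and $J_\ell$ for the all-ones matrix,
\[
\Pb\big[\{1,\dots,\ell\}\text{ red}\big]\ \le\ e^{-\lambda\binom{\ell}{2}|\tau|}\Big(\bE\,e^{-\frac{\lambda}{2}\,\mathbf{g}^{\!\top}(J_\ell-I_\ell)\mathbf{g}}\Big)^{d}
= e^{-\lambda\binom{\ell}{2}|\tau|}\big((1+\lambda(\ell-1))(1-\lambda)^{\ell-1}\big)^{-d/2}
\]
for $0<\lambda<1$. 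Optimising in $\lambda$ gives a bound of the shape $\Pb[\{1,\dots,\ell\}\text{ red}]\le\rho_{\mathrm{red}}(p,d)^{\binom{\ell}{2}}$ with $\rho_{\mathrm{red}}(p,d)<p$; in fact, once $d<\big(\Phi^{-1}(1-p)\big)^{2}(\ell-1)^{2}$ the vectors cannot be pairwise that anti-aligned at all (as $\|X_1+\dots+X_\ell\|^{2}\ge0$), so this probability vanishes, and just below that threshold it is super-exponentially small.

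For the blue clique the single-sum Chernoff bound is worthless — the constraint $\langle X_i,X_j\rangle\ge\tau$ is not binding for the sum — so instead I would exploit that the Gram matrix $G=(\langle X_i,X_j\rangle)_{i,j\le C\ell}$ has the Wishart density, proportional to $(\det G)^{(d-C\ell-1)/2}e^{-\operatorname{tr}(G)/2}$, which is far cleaner than the corresponding object on the sphere. Substituting $G=dI+\sqrt{d}\,\Xi$ and expanding $\log\det(I+\Xi/\sqrt d)$ shows that the rescaled off-diagonal entries $\langle X_i,X_j\rangle/\sqrt d$ behave like i.i.d.\ standard Gaussians up to a correction governed by $d^{-1/2}\operatorname{tr}(\Xi^{3})$, which is negligible once $d$ is large compared with $(C\ell)^{2}$ but otherwise inflates the probability. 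Carrying this through (or, alternatively, running a recursion in which the constraints a new vector must satisfy are analysed inside the span of the previous ones) yields $\Pb[\{1,\dots,C\ell\}\text{ blue}]\le\rho_{\mathrm{blue}}(p,d)^{\binom{C\ell}{2}}$ with $\rho_{\mathrm{blue}}(p,d)\ge1-p$.

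It remains to optimise. Writing $d=a\ell^{2}$, the displayed inequality holds once $n\le\min\!\big(\rho_{\mathrm{red}}(p,a)^{-1/2},\,\rho_{\mathrm{blue}}(p,a)^{-C/2}\big)^{\ell(1-o(1))}$; at $p=p_C$ and $a\to\infty$ both terms equal $p_C^{-1/2}$, recovering exactly the Erd\H{o}s bound. The point is then that moving to a finite $a$ strictly lowers $\rho_{\mathrm{red}}$ below $p$ while only mildly raising $\rho_{\mathrm{blue}}$ above $1-p$, and together with a small readjustment of $p$ this strictly increases the above minimum, producing the desired $\eps>0$. I expect the main obstacles to be (i) obtaining the blue-clique estimate with a constant sharp enough to detect the improvement, since there the Gaussian correlations genuinely matter, and (ii) verifying in the two-parameter optimisation that the gain on the red side near the Erd\H{o}s point really does outweigh the loss on the blue side.
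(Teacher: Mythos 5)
Your setup (i.i.d.\ Gaussian vectors, colouring by an inner-product threshold, first moment method) is the same as the paper's, but your red-clique estimate has a genuine flaw that makes the optimisation at the end unworkable. Bounding $\Pb[\text{red clique}]$ by the probability of the single lower-tail event $\sum_{i<j}\langle X_i,X_j\rangle<\binom{\ell}{2}\tau$ throws away far too much: conditionally on the sum sitting at the boundary, the $\binom{\ell}{2}$ individual inner products are spread around $\tau$ rather than all below it, so the conjunction remains unlikely. Quantitatively, optimising your Chernoff bound as $d/\ell^2\to\infty$ gives $\exp\!\big(-\tfrac12 c_p^2\binom{\ell}{2}(1+o(1))\big)$ with $c_p=\Phi^{-1}(1-p)$, whereas $p=\Phi(-c_p)<\phi(c_p)/c_p<e^{-c_p^2/2}$ for every $p\in(0,1/2)$, so $\rho_{\mathrm{red}}\to e^{-c_p^2/2}>p$, not $p$. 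Your claim that ``at $p=p_C$ and $a\to\infty$ both terms equal $p_C^{-1/2}$'' is therefore false on the red side; the min is $e^{c_{p_C}^2/4}<p_C^{-1/2}$, which is strictly worse than Erd\H{o}s before any improvement has even been attempted. The $\binom{\ell}{3}/\sqrt{d}$ correction you extract from the third-order term in the cumulant expansion is of size $O(\ell^2/D)$, which cannot close a $\Theta(\ell^2)$ gap in the exponent once $D$ is chosen large, and if instead you push $d$ down towards the critical dimension $c_p^2\ell^2$ to make the sum bound strong, the blue-clique probability inflates by a constant per pair and the claimed balance ($\rho_{\mathrm{blue}}$ ``only mildly'' above $1-p$) is no longer true, nor is it verified.

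The paper avoids this loss by never passing to the global sum: after a Gram--Schmidt (Bartlett) change of coordinates it conditions one column at a time and tracks each pairwise constraint separately, obtaining $P_{\mathrm{red},\ell}\le p^{\binom{\ell}{2}}\exp\big(-\tfrac{\phi(c_p)^3}{p^3\sqrt d}\binom{\ell}{3}(1+O(1/D))\big)$, which \emph{does} reduce to the Erd\H{o}s benchmark as $D\to\infty$ and strictly beats it at finite $D$. Your one-sentence alternative for the blue side --- ``running a recursion in which the constraints a new vector must satisfy are analysed inside the span of the previous ones'' --- is essentially the paper's mechanism, and that is the route you should take for \emph{both} colours. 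The Wishart-density idea is also reasonable (the Bartlett decomposition is precisely a constructive form of it), but the $\log\det$ expansion, the error control when $d\asymp\ell^2$, and the truncation to ``perfect'' configurations are the substance of the proof and are not carried out here. In short: the key missing idea is a pairwise, rather than aggregate, analysis of the red constraints, and until that is fixed the two-parameter optimisation in your last paragraph starts from the wrong baseline and cannot produce the desired $\eps>0$.
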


Beyond its simplicity, another advantage of this analysis is that it allows us to obtain quantitative improvements over the work of Ma, Shen and Xie. Namely, as $C\to \infty$ and $p_C\to 0$, Ma, Shen and Xie obtain a gain of $\eps$ which goes to $0$ with $C\to \infty$. On the other hand, the argument we present here can be extended to show one can take $\eps=(e^{1/24}-1)p_C^{-1/2}$, thus showing $R(\ell, C\ell)\geq (e^{1/24}p_C^{-1/2})^\ell$ for all sufficiently large $C$. Let us also remark that when $C\to 1$, our proof gives the same quantitative behavior $\eps=\Omega((C-1)^2)$ as \cite{MSX25}. 
To focus on the simplest possible presentation, we present this proof in Appendix~\ref{sec:appendix_2} of this paper.

\subsection{Gaussian random geometric graph \texorpdfstring{$G(n, d, p)$}{G(n,d,p)}.}

Let us now formally define the model of the Gaussian random geometric graph, before saying a couple of words about how to analyze it. Let $n, d\geq 1$ be positive integers, and let $p\in (0, 1/2)$ be a probability. Also, let $c_p>0$ be the unique real number for which $\Pb[Z\leq -c_p]=p$, where $Z\sim \cN(0, 1)$ is the standard one-dimensional Gaussian.

\begin{definition}\label{def:RGG}
The vertices of the Gaussian random geometric graph $G(n, d, p)$ correspond to vectors $\bx_1, \dots, \bx_n$, independently sampled from the $d$-dimensional normal distribution $\cN(0, \frac{1}{d}I_d)$. Further, the vertices $i, j\in [n]$ are connected by an edge if $\langle \bx_i, \bx_j\rangle\geq -\frac{c_p}{\sqrt{d}}$.
\end{definition}

Note that given a graph $G\sim G(n, d, p)$, we can define a red/blue edge-coloring of the complete graph $K_n$, by declaring the edges of $G$ to be blue, and all other edges to be red.

Our notion of random geometric graphs differs slightly from the one commonly used in the literature. There are two main reasons. Firstly, it would be common to define the threshold $c_p$ such that $\langle \bx_i, \bx_j\rangle\geq -\frac{c_p}{\sqrt{d}}$ occurs exactly with probability $1-p$, when $\bx_i, \bx_j\sim \cN(0, \frac{1}{d}I_d)$, since the graph $G(n, d, p)$ would then have expected density \textit{exactly} $1-p$.

However, we opt for a simpler definition of $c_p$, and observe that the graph we define has expected density \textit{very close} to $1-p$ (in fact, the expected density converges to $1-p$ as $d$ increases). The reason is that the length of vectors $\bx_i$ is well concentrated around $1$, and so $\langle \bx_i, \bx_j\rangle$ is essentially the length of the projection of $\bx_j$ onto $\bx_i$, which is a Gaussian random variable of variance $\frac{1}{d}$, and as such exceeds the threshold $-c_p/\sqrt{d}$ with probability $1-p$.

While this first difference is mostly cosmetic, the second one is more substantial. It seems that random geometric graphs of density larger than $1/2$ (i.e. when the threshold is taken to be negative) have not been thoroughly studied in the literature, with most works focusing on the regime of sparse random geometric graphs. However, there is no obvious symmetry relating the two settings, such as taking the complements. Still, as \cite{MSX25} and this work demonstrate, it is worthwhile to study the dense random geometric graphs, especially since some of the techniques from the random geometric graph literature do not work in this setting anymore.

Let us also mention that the random geometric graphs have been thoroughly investigated in their own right. For the early study of random geometric graphs, mostly in low dimension, see Penrose's book \textit{Random Geometric Graphs} \cite{Pen03}. Some 15 years ago, there was a shift and the random geometric graphs in high dimension also started receiving considerable attention. In one of the first such works, Devroye, Gy\"orgy, Lugosi and Udina \cite{DGLU11} studied the clique numbers of random geometric graphs, showing that, for example, when $d\gg (\log n)^3$ then $\omega(G(n, d, p))\leq O(\log n)$ with high probability. They also studied the regime when $d\approx (\log n)^2$ (which is the regime relevant for us, since $\ell\approx \log n$ and $d\approx \ell^2$), although they only provided the upper bound $\omega(G(n, d, p))\leq O((\log n)^3)$ in this case. Thus, their results are not sufficient for the purposes of this paper, and we require a much more precise analysis of the clique numbers when $d$ is as low as $(\log n)^2$.

More generally, one can ask for which $d$ is the distribution of $G(n, d, p)$ close to that of $G(n, p)$ (in total variation distance, say), and this question received a lot of attention over the years (see e.g. \cite{BDER16}). However, this question is not the focus of our article and we will return to it in a future paper.

\subsection{Applying Gaussian random graphs to Ramsey number lower bounds}

Let us now discuss how the model $G(n, d, p)$ is used to prove Theorem~\ref{thm:main}. We define the edge-coloring of $K_n$ based on a sample $G\sim G(n, d, p)$ by coloring all edges present in $G$ blue and all missing edges red. We will choose $n$ to be the intended lower bound for the Ramsey number, i.e. $n=(p_C^{-1/2}+\eps)^\ell$ and $d=D^2\ell^2$, where $D$ is a large constant depending only on $C$ and $\ell \gg D, C$. Finally, the density $p$ of the red graph will be slightly larger than $p_C$, for the reasons we will explain below.

As usual, our goal will be to show that the expected number of red cliques of size $\ell$ and blue cliques of size $C\ell$ in this coloring is less than $1$. By linearity of expectation, this reduces to bounding the probability that a red clique of size $\ell$ or a blue clique of size $C\ell$ arises in the above coloring. Equivalently, the goal is to bound $\Pb[G(C\ell, d, p)\text{ is a clique}]$ and $\Pb[G(\ell, d, p)\text{ is an independent set}]$. Since these probabilities will play the central role in the proofs, let us fix the notation $P_{{\rm red}, r}=\Pb[G(r, d, p)\text{ is an independent set}]$ and $P_{{\rm blue}, r}=\Pb[G(r, d, p)\text{ is a clique}]$, where $r\geq 1$ is a positive integer. The bounds on $P_{\red, \ell}$ and $P_{\blue, r}$ which we obtain are summarized in the following proposition.

\begin{proposition}\label{prop:clique_probabilities}
Let $G\sim G(n, d, p)$ be a Gaussian random geometric graph, and let $a=\frac{e^{-c_p^2/2}}{\sqrt{2\pi}}$. If $d\geq D^2\ell^2$ where $D$ is sufficiently large compared to $C$, then
{\smaller[0.3]
\begin{align}
&P_{\red, \ell}\leq p^{\binom{\ell}{2}}\exp\bigg(-\frac{a^3}{p^3\sqrt{d}}\binom{\ell}{3}\Big(1+O_p\Big(\frac{1}{D}\Big)\Big)\bigg)\label{eq:clique_probabilities_red},\\
&P_{\blue, C\ell}\leq (1-p)^{\binom{C\ell}{2}}\exp\bigg(\frac{a^3}{(1-p)^3\sqrt{d}}\binom{C\ell}{3}\Big(1+O_{p, C}\Big(\frac{1}{D}\Big)\Big)\bigg)\label{eq:clique_probabilities_blue}.
\end{align}
}
\end{proposition}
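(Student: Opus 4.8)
The plan is to estimate $P_{\red,\ell}$ and $P_{\blue,C\ell}$ by conditioning on the Gram matrix of the Gaussian vectors and carefully tracking the ``first-order correction'' coming from the geometry. Write $\bx_1,\dots,\bx_r\sim\cN(0,\tfrac1d I_d)$, and condition on $\bx_1,\dots,\bx_{r-1}$; then the event that $\bx_r$ has the correct adjacency to all of $\bx_1,\dots,\bx_{r-1}$ is a joint lower/upper tail event for the $r-1$ correlated Gaussians $\langle\bx_i,\bx_r\rangle$. The main point is that since $\bx_r$ is Gaussian, conditioning on $\langle\bx_1,\bx_r\rangle,\dots,\langle\bx_{r-1},\bx_r\rangle$ keeps everything jointly Gaussian, so we can build up the clique/independent set probability one vertex at a time and write it as a product $\prod_{r=2}^{\ell}\Pb[\bx_r\text{ correctly joined to }\bx_1,\dots,\bx_{r-1}\mid \bx_1,\dots,\bx_{r-1}]$. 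For a truly independent coloring each factor would be exactly $p^{r-1}$ (resp.\ $(1-p)^{r-1}$); the content of the proposition is that the $r$-th factor is $p^{r-1}$ times $1$ minus a positive quantity of order $\binom{r-1}{2}\cdot \tfrac{a^3}{p^3\sqrt d}$ (up to $1+O(1/D)$), with the analogous statement, but with a $+$ sign and $(1-p)$ in place of $p$, for blue cliques.

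First I would reduce to understanding, for a fixed small correlation $\rho$, the probability that two Gaussians $Z_1,Z_2$ with correlation $\rho$ are both $\le -c_p$ (for red) or both $\ge -c_p$ (for blue). A second-order Taylor expansion in $\rho$ — essentially Price's theorem / the Gaussian correlation expansion — gives $\Pb[Z_1\le -c_p,\ Z_2\le -c_p]=p^2 + a\,\phi?\dots$; more precisely the first derivative in $\rho$ at $\rho=0$ is $\varphi(c_p)^2=a^2$, so $\Pb[Z_1,Z_2\le -c_p]=p^2+a^2\rho+O(\rho^2)$ and likewise for the upper tails. The pairwise correlations here are $\rho_{ij}=\sqrt d\,\langle\bx_i,\bx_j\rangle$ conditioned to lie in the appropriate half-line, so each has conditional mean of order $a/(p\sqrt d)$ (for red; $a/((1-p)\sqrt d)$ for blue) by the Gaussian tail-mean formula $\bE[Z\mid Z\le -c_p]=-a/p$. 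Summing the pairwise corrections over the $\binom{r-1}{2}$ pairs among $\bx_1,\dots,\bx_{r-1}$ and then over $r$ produces the $\binom{\ell}{3}$ (resp.\ $\binom{C\ell}{3}$) factor, and the sign flip between red and blue comes from the fact that the typical pairwise inner product among blue-clique vertices is \emph{positive} while among red-independent-set vertices it is \emph{negative}. Exponentiating $\prod(1-x_r)$ via $1-x\le e^{-x}$ (resp.\ $1+x\le e^{x}$) yields the stated bounds.

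To make this rigorous rather than heuristic, I would set it up as an explicit induction on $r$: let $Q_{\red,r}$ denote $P_{\red,r}/p^{\binom r2}$ and show $Q_{\red,r}\le Q_{\red,r-1}\cdot\big(1-\tfrac{a^3}{p^3\sqrt d}\binom{r-1}{2}(1+O(1/D))\big)$. The conditioning step requires: (i) controlling the event that $\|\bx_i\|$ is within, say, $d^{-1/3}$ of $1$ for all $i$ (Gaussian norm concentration, with the bad event contributing negligibly since $d\gg\ell^2$); (ii) on this event, the Gram matrix is close to the identity so the conditional distribution of $\langle\bx_i,\bx_r\rangle$ given the previous vectors is nearly $\cN(0,1/d)$ with a small mean shift governed by the already-revealed inner products; and (iii) a uniform second-order bound on the Gaussian orthant probability as a function of the correlation matrix, valid when all correlations are $O(1/D)$, which is where the $O_p(1/D)$ and $O_{p,C}(1/D)$ error terms are generated and where we must check the constant in front of $1/D$ depends only on $p$ (and $C$), not on $\ell$.

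The main obstacle I anticipate is step (iii): bounding the $r$-dimensional Gaussian orthant probability uniformly and extracting exactly the right coefficient $a^3/p^3$ (note the \emph{cube}: one factor of $a/p$ from the conditional mean shift of a new inner product, and a factor $a^2$ from the second-order Gaussian correlation expansion, giving $a^3$, with $p^3$ in the denominator since we are dividing out $p^{r-1}$ and each correction is relative). One has to be careful that the cross-terms between different pairs $(i,j)$ and $(i',j')$ — i.e. the genuinely multivariate part of the orthant probability expansion — are of lower order $O(1/d)=O(1/(D^2\ell^2))$ per term and hence contribute only to the $O(1/D)$ error after summing over the $O(\ell^4)$ such quadruples. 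Keeping the blue case honest is slightly more delicate because $C\ell$ vertices means the accumulated correlations are $C$ times larger, so the requirement ``$D$ large compared to $C$'' is exactly what is needed to keep all pairwise correlations $o(1)$ and the Taylor expansion valid; I would track the $C$-dependence explicitly throughout the blue computation.
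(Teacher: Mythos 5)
Your approach — conditioning vertex by vertex, Taylor-expanding the multivariate Gaussian orthant probability in the pairwise correlations, and accumulating $\binom{r-1}{2}$ corrections of size $\tfrac{a^3}{p^3\sqrt d}$ at each step to produce the $\binom{\ell}{3}$ factor — is genuinely different from the paper's route, and the bookkeeping of the leading coefficient is correct: the factor $a^2/p^2$ from the derivative of the bivariate orthant probability and the factor $a/p$ from the truncated-Gaussian mean do multiply to $a^3/p^3$. The paper instead first reduces $P_{\red,r}$ to the ``perfect sequence'' probability $P^*_{\red,r}$ (union-bounding over which indices fail a norm/projection condition), then changes basis via the Bartlett decomposition so that the lower-triangular matrix entries are \emph{independent} Gaussians, and runs a reverse induction over columns in which the conditioning makes each new column a vector of independent truncated Gaussians; the exponential moment of the quadratic form $\sum_{i<j}\by_i(s)\by_j(s)$ is then controlled by a subgaussian lemma.

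There is, however, a real gap in your plan as written, and it is precisely where the Bartlett decomposition does work that your coordinates do not. In your induction you need $\bE\bigl[\rho_{ij}\,\big|\,\text{red clique on }[r-1]\bigr]$ (and, more honestly, the expectation of the \emph{exponential} of $\sum_{i<j}\rho_{ij}$), but you justify the value $-a/(p\sqrt d)$ ``by the Gaussian tail-mean formula,'' i.e.\ by conditioning only on the single event $\{\rho_{ij}\le -c_p/\sqrt d\}$. The actual conditioning is the joint event over all $\binom{r-1}{2}$ pairs, and the pairwise inner products $\langle\bx_k,\bx_l\rangle$ are not independent across pairs; the joint conditioning can shift each $\rho_{ij}$ by amounts that the marginal truncation formula does not capture. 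Moreover, since you need to bound $\bE\bigl[e^{\lambda\sum_{i<j}\rho_{ij}}\mathbf 1[\text{red clique}]\bigr]$ and not merely the mean, you also need some form of exponential-moment control for a correlated family of near-truncated variables. These are exactly the two difficulties the paper's change of basis dissolves: after revealing columns $1,\dots,s-1$ of the Bartlett matrix, the entries $\by_{s+1}(s),\dots,\by_r(s)$ in the new column are \emph{independent} one-dimensional Gaussians, so conditioning on the adjacency pattern of vertex $s$ makes them independent truncated Gaussians, and Lemma~\ref{lemma:exponential_moment_special_case} then gives the needed exponential-moment bound for $\sum_{i<j}\by_i(s)\by_j(s)$ directly. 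Without an analogue of these two ingredients, steps (ii) and (iii) of your outline are not yet a proof; you would either have to prove a uniform multivariate orthant expansion that also handles the conditioned distribution of the Gram matrix (harder than the bivariate Price-theorem step you cite), or pass to a coordinate system where the conditioning decouples, which is essentially reinventing the paper's approach.
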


In the above proposition, the notation $O_p(1/D)$ and $O_{p, C}(1/D)$ hides constants depending on $p$ and $C$, but not on $\ell$ or $d$. To avoid cluttering the formulas, we will often omit the subscripts $p, C$ and simply write $O(\cdot)$ to mean $O_{p, C}(\cdot)$. This is justified since we are assuming that $D$ is sufficiently large compared to $p, C$ (note that we will assume a different convention in the appendix, where the obtaining the precise dependence of $D$ on $p, C$ will be crucial). At this point, it is good to say a couple of words about the intuition behind the shape of these bounds. In the first order, the terms $p^{\binom{\ell}{2}}$ and $(1-p)^{\binom{C\ell}{2}}$ correspond to the behavior expected in an Erd\H{o}s-R\'enyi random graph which we already discussed above.

The corrections in formulas (\ref{eq:clique_probabilities_red}), (\ref{eq:clique_probabilities_blue}) account for the correlations between the edges of $G$. Intuitively, if the edges $ij$, $jk$ are colored blue, that means that the pairs of vectors $\bx_i, \bx_j$ and $\bx_j, \bx_k$ are close, and so $\bx_i$ and $\bx_k$ are more likely to be close as well, thus making the edge $ik$ slightly more likely. Hence, this positive correlation between neighboring blue edges is manifested in the fact that the correction in formula (\ref{eq:clique_probabilities_blue}) is larger than $1$, i.e. that the probability of $G(C\ell, d, p)$ being a clique is higher than in the Erd\H{o}s-R\'enyi random graph.

Let us make this vague intuition quantitative. If $\bx_i, \bx_j, \bx_k\sim \cN(0, \frac{1}{d}I_d)$ are independent, let
{\smaller[0.3]
\[\delta_b=\frac{\Pb[jk \text{ is blue}|ij, ik \text{ are blue}]}{\Pb[jk \text{ is blue}]}>1.\]
}
In our particular model, it is not very hard to calculate $\delta_b\approx 1+\frac{a^3}{(1-p)^3\sqrt{d}}$. With this definition, the probability of a triangle is in fact $(1-p)^3\delta_b$ instead of just $(1-p)^3$. Moreover, we should expect that
{\smaller[0.3]
\[P_{\blue, C\ell}\approx (1-p)^{\binom{C\ell}{2}}\delta_b^{\binom{C\ell}{3}},\]
}
where the factor $\delta_b$ appears once for every triangle $i j k$. Proving this formula precisely is the main content of Proposition~\ref{prop:clique_probabilities}. 

On the other hand, the real gain comes from the red cliques. Namely, if $ij, jk$ are red edges, then the vectors $\bx_i, \bx_k$ have very negative inner products with $\bx_j$. In other words, the vectors $\bx_i, \bx_k$ are very far from $\bx_j$, and as an exaggeration, one can imagine that they point in an almost opposite direction from $\bx_j$. Then, $\bx_i, \bx_k$ are most likely close together, making it unlikely that $ik$ is not an edge. Thus, having independent sets in Gaussian random geometric graphs is much less likely than in binomial random graph. This intuition can be made quantitative by defining $\delta_r={\Pb[jk \text{ is red}|ij, ik \text{ are red}]}/{\Pb[jk \text{ is red}]}$ and calculating $\delta_r=1-\frac{a^3}{p^3\sqrt{d}}$ and $P_{\red, \ell}\approx p^{\binom{\ell}{2}}\delta_r^{\binom{\ell}{3}}$ in a way paralleling the discussion above.

At this point, one might reasonably complain - the red cliques are less likely in our random coloring compared to the binomial random graph, but this comes at a price of blue cliques being more likely, and our goal is to avoid \textit{both} red and blue cliques. The final observation is that one might increase $p$ slightly (and consequently decrease the density blue edges), thus lowering the probability of blue cliques and raising the probability of red cliques. If one gets the balance just right, the gain coming from (\ref{eq:clique_probabilities_red}) outweighs the losses in (\ref{eq:clique_probabilities_blue}), and one obtains the graph in which both blue and red cliques are less likely compared to $G(n, 1-p_C)$. Let us formalize the last paragraph in the form of the following lemma.

\begin{lemma}\label{lemma:bookkeeping}
Let $C>1$ be a constant and let $p_C\in (0, 1/2)$ be the unique solution to $C=\frac{\log p_C}{\log (1-p_C)}$. If $D$ is sufficiently larger than $C$, there exist $\eps_1=\eps_1(C, D)>0$ and $p\in (0, 1/2)$ such that the clique probabilities in $G(n, d, p)$  with $d=D^2\ell^2$ are bounded by
{\smaller[0.3]
\[P_{{\rm red}, \ell}\leq \Big(p_C-\eps_1\Big)^{\binom{\ell}{2}} \text{ and } P_{{\rm blue}, C\ell}\leq \Big(1-p_C-\eps_1 \Big)^{\binom{C\ell}{2}}.\]
}
\end{lemma}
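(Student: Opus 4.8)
The plan is to treat Proposition~\ref{prop:clique_probabilities} as a black box and perform a one-variable optimization over $p$ near $p_C$. Write the two bounds as exponentials of their logarithms. For the red side,
\[
\log P_{\red,\ell}\le \binom{\ell}{2}\log p-\frac{a^3}{p^3\sqrt d}\binom{\ell}{3}\bigl(1+O(1/D)\bigr),
\]
and similarly for the blue side with $p\mapsto 1-p$, $\ell\mapsto C\ell$ and a $+$ sign in front of the cubic term. Since $d=D^2\ell^2$ we have $\sqrt d=D\ell$, and $\binom{\ell}{3}/\sqrt d=\ell^3/(6D\ell)(1+o(1))=\frac{\ell^2}{6D}(1+o(1))$, which is of the same order $\ell^2$ as $\binom{\ell}{2}$. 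So after dividing by $\binom{\ell}{2}$ both exponents become, up to $1+o(1)$ factors in $\ell$,
\[
\log p-\frac{a^3}{p^3}\cdot\frac{1}{3D}\bigl(1+O(1/D)\bigr)\quad\text{and}\quad \log(1-p)+\frac{a^3 C^3}{(1-p)^3}\cdot\frac{1}{3D}\bigl(1+O(1/D)\bigr),
\]
using $\binom{C\ell}{2}=C^2\binom{\ell}{2}(1+o(1))$ and $\binom{C\ell}{3}=C^3\binom{\ell}{3}(1+o(1))$. Here $a=a(p)=e^{-c_p^2/2}/\sqrt{2\pi}$ is a smooth positive function of $p$.

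Next I would set $p=p_C+t$ for small $t>0$ and Taylor-expand. At $t=0$ the leading terms are exactly $\log p_C$ and $\log(1-p_C)$, which by definition of $p_C$ satisfy $\log p_C = C\log(1-p_C)$; this is precisely the Erdős balance point. Increasing $p$ by $t$ changes $\log p$ by $\frac{t}{p_C}+O(t^2)$ and $\log(1-p)$ by $-\frac{t}{1-p_C}+O(t^2)$ — so the red exponent goes \emph{up} by $\Theta(t)$ and the blue exponent goes \emph{down} by $\Theta(t)$, which is the wrong direction on the red side. The point is that the geometric correction supplies a fixed negative shift of size $\Theta(1/D)$ to the red exponent and a fixed positive shift of size $\Theta(1/D)$ to the blue exponent. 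So I choose $t$ small enough (comparable to $1/D$, or even a fixed small constant times $1/D$) that on the red side the $\Theta(t)$ increase is strictly smaller in absolute value than the $\Theta(1/D)$ geometric gain, leaving the red exponent strictly below $\log p_C$; and simultaneously, since on the blue side \emph{both} the $\Theta(t)$ term and we need the geometric loss to not overwhelm it — here we use that the blue geometric term $\tfrac{a^3C^3}{3D(1-p)^3}$ is a bounded constant (depending on $C$) times $1/D$, so taking $D$ large makes it as small as we like, while $t$ can be chosen of order $1/D$ too, say $t = \kappa/D$ for a suitable small $\kappa=\kappa(C)$, so that $-t/(1-p_C)$ dominates $+\tfrac{a^3C^3}{3D(1-p)^3}$ and the blue exponent is also strictly below $\log(1-p_C)$. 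Concretely, for $D$ large one shows
\[
\log p-\tfrac{a(p)^3}{3Dp^3}(1+O(1/D))<\log p_C\quad\text{and}\quad \log(1-p)+\tfrac{a(p)^3C^3}{3D(1-p)^3}(1+O(1/D))<\log(1-p_C)
\]
simultaneously, which after exponentiating and raising to $\binom{\ell}{2}$ (resp. $\binom{C\ell}{2}$) gives the claimed bounds with some $\eps_1=\eps_1(C,D)>0$ (absorbing the $o(1)$-in-$\ell$ factors into a slightly smaller $\eps_1$, valid for $\ell$ large — the lemma is implicitly asymptotic in $\ell$, consistent with Proposition~\ref{prop:clique_probabilities}).

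The main obstacle is making the two inequalities hold with a \emph{single} choice of $p$: the red side wants $t$ not too large (so that the linear loss doesn't eat the geometric gain), while the blue side wants $t$ not too small (so that the linear gain beats the geometric loss). The resolution is a matter of matching constants: since the red geometric gain is $\Theta(1/D)$ and the blue geometric loss is also $\Theta(1/D)$ (with a $C$-dependent constant), choosing $t$ of exact order $1/D$ with a small enough but fixed implied constant, and then $D$ large, threads the needle; one must check that $a(p)=a(p_C)+O(t)$ so perturbing $p$ does not disturb the $\Theta(1/D)$ estimates, and that the $O(1/D^2)$ corrections from Taylor expansion and from the $O(1/D)$ error in Proposition~\ref{prop:clique_probabilities} are genuinely lower order. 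All of this is elementary calculus once the leading behavior above is isolated; I would present it as: fix $C$, expand, choose $t=\kappa(C)/D$, then choose $D=D(C)$ large.
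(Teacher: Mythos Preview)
Your outline has the right shape---take $p=p_C+t$ with $t\asymp 1/D$, Taylor expand, and show both normalized exponents drop below $\log p_C$ and $\log(1-p_C)$---and this is exactly what the paper does. But there is a genuine gap at the ``threading the needle'' step, and it is precisely the point where the paper does the real work.

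After dividing the blue log-bound by $\binom{C\ell}{2}$ (not $\binom{\ell}{2}$), the geometric correction is $\frac{a^3C}{3D(1-p)^3}$, with a single power of $C$, not $C^3$: you get $\binom{C\ell}{3}/\binom{C\ell}{2}\approx C\ell/3$ and $\sqrt d=D\ell$. More importantly, once you write $p=p_C+\kappa/D$, the two constraints become (to leading order in $1/D$)
\[
\text{red: }\ \frac{\kappa}{p_C}<\frac{a^3}{3p_C^3},\qquad
\text{blue: }\ \frac{\kappa}{1-p_C}>\frac{a^3C}{3(1-p_C)^3},
\]
i.e.\ $\kappa$ must lie in the interval $\bigl(\tfrac{a^3C}{3(1-p_C)^2},\,\tfrac{a^3}{3p_C^2}\bigr)$. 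Both endpoints are fixed constants depending only on $C$; taking $D$ large does \emph{nothing} to this interval, since every term in sight scales as $1/D$ and the factor cancels. So your sentence ``taking $D$ large makes it as small as we like\ldots threads the needle'' is not a valid argument. The entire content of the lemma is that this interval is nonempty, i.e.\ that
\[
\frac{a^3}{3p_C^2}>\frac{a^3C}{3(1-p_C)^2},\quad\text{equivalently}\quad (1-p_C)^2>Cp_C^2,
\]
and this is a nontrivial inequality that uses the defining relation $C=\log p_C/\log(1-p_C)$ (the paper proves it via concavity of $t\mapsto(1-t)^2\log\tfrac{1}{1-t}-t^2\log\tfrac{1}{t}$ on $(0,1/2)$). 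You never state or verify it, so as written your argument does not close: you have correctly identified that red wants $\kappa$ small and blue wants $\kappa$ large, but not that the two demands are compatible. Once you supply this inequality, the rest of your outline (choose $\kappa$ in the interval, then take $D$ large to kill the $O(1/D)$ errors from the proposition and from Taylor expansion) goes through, and matches the paper's proof.
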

\begin{proof}[Proof of Lemma~\ref{lemma:bookkeeping} assuming Proposition~\ref{prop:clique_probabilities}.]
As we have indicated in the discussion preceding the lemma, in bounds (\ref{eq:clique_probabilities_red}), (\ref{eq:clique_probabilities_blue}) we have gained more in the probability $P_{{\rm red}, \ell}$ than we have lost in the bound on $P_{{\rm blue}, C\ell}$. This is justified in the following claim.
\begin{claim}\label{claim:inequality}
{\smaller[0.3]
\[\frac{a^3}{3p_C^2}>\frac{a^3C}{3(1-p_C)^2}.\]
}
\end{claim}
\begin{claimproof}
By canceling out $a^3/3$, we reduce the inequality to $1/p_C^{2}>C/(1-p_C)^2$, i.e. to showing that $(1-p_C)^2>Cp_C^2$. If we recall that $C=\frac{\log p_C}{\log (1-p_C)}=\frac{\log_2 1/p_C}{\log_2 1/(1-p_C)}$ and further rearrange the inequality, it reduces to
{\smaller[0.3]
\[(1-p_C)^2\log_2 \frac{1}{1-p_C}>p_C^2\log_2 \frac{1}{p_C}.\]
}
Thus, the goal is to show that the function $f(t)=(1-t)^2 \log_2 \frac{1}{1-t}-t^2 \log_2 \frac{1}{t}$ is positive for all $t\in (0, 1/2)$. It is not hard to see that $\lim_{t\to 0^+}f(t)=\lim_{t\to 1/2^-}f(t)=0$. Moreover, a simple calculation shows that the derivative of $f(t)$ is $f'(t)=\frac{1}{\ln 2}-2H(t)$, where $H(t)=-t\log_2 t-(1-t)\log_2(1-t)$ is the binary entropy function. Thus, $f''(t)=-2H'(t)$, and since $H$ is a strictly increasing function on $t\in (0, 1/2)$, we have $f''(t)<0$. Hence, $f$ is strictly concave and tends to $0$ at the endpoints of the interval $(0, 1/2)$, so it must be positive inside the interval $(0, 1/2)$.
\end{claimproof}

By continuity and since $p_C<1/2$, there exists an open neighborhood $(p_C-\eta, p_C+\eta)\subset (0, 1/2)$ of $p_C$ and a constant $\eps_0$ (where $\eta$ and $\eps_0$ depend only on $p_C$) such that
{\smaller[0.3]
\[\underbrace{\inf_{|p-p_C|< \eta}\frac{a^3}{3p^2}}_{\alpha}>\eps_0+\underbrace{\sup_{|p-p_C|< \eta}\frac{a^3C}{3(1-p)^2}}_{\beta}.\]
}

If we denote the above infimum and supremum by $\alpha$ and $\beta$, we note that both $\alpha$ and $\beta$ depend only on $p_C$ (and thus only on $C$). We also have that $\alpha>\eps_0+\beta$. We can use the Proposition~\ref{prop:clique_probabilities} to write
{\smaller[0.3]
\begin{align*}
    &P_{{\rm red}, \ell}^{1/\binom{\ell}{2}} \leq p \exp\Big(-\big(1+O(D^{-1})\big)\frac{\alpha \ell}{p\sqrt{d}}\Big)\leq p \exp\Big(-\big(1+O(D^{-1})\big)\frac{\alpha}{pD}\Big), \text{ and } \\
    &P_{{\rm blue}, C\ell}^{1/\binom{C\ell}{2}}\leq (1-p) \exp\Big(\big(1+O(D^{-1})\big)\frac{\beta \ell}{(1-p)\sqrt{d}}\Big)\leq (1-p) \exp\Big(\big(1+O(D^{-1})\big)\frac{\beta}{(1-p)D}\Big)
\end{align*}
}
Since $D$ is very large compared to $C$, we choose $|p-p_C|<\eta$, and so by using $e^t=1+t+O(t^2)$ for $|t|\leq 1$ we can write
{\smaller[0.3]
\begin{align*}
&P_{{\rm red}, \ell}^{1/\binom{\ell}{2}} \leq p\bigg(1-\frac{\alpha}{pD}+O(D^{-2})\bigg)=p-\frac{\alpha}{D}+O(D^{-2}), \text{ and }\\
&P_{{\rm blue}, C\ell}^{1/\binom{C\ell}{2}} \leq (1-p)\bigg(1+\frac{\beta}{(1-p)D}+O(D^{-2})\bigg)=1-p+\frac{\beta}{D}+O(D^{-2}).
\end{align*}
}
Hence, if we set $p=p_C+\frac{\alpha+\beta}{2D}$ (recall that $\alpha, \beta$ depend only on $p_C$) and let $D$ be sufficiently larger than $C$, we obtain
{\smaller[0.3]
\[P_{{\rm red}, \ell}\leq \Big(p_C-\frac{\alpha-\beta}{2D}+O(D^{-2})\Big)^{\binom{\ell}{2}} \text{ and } P_{{\rm blue}, C\ell}\leq \Big(1-p_C-\frac{\alpha-\beta}{2D}+O(D^{-2})\Big)^{\binom{C\ell}{2}}.\]
}
By setting $\eps_1<\eps_0/2D$ and letting $D$ be sufficiently large, we obtain precisely what we wanted.
\end{proof}

\begin{proof}[Proof of Theorem~\ref{thm:main} assuming Proposition~\ref{prop:clique_probabilities}.]
By Lemma~\ref{lemma:bookkeeping}, there exists some $\eps_1>0$ and $p\in (0, 1/2)$ for which the clique probabilities in $G(n, d, p)$ (where $d=D^2\ell^2$) satisfy
{\smaller[0.3]
\[P_{{\rm red}, \ell}\leq \Big(p_C-\eps_1\Big)^{\binom{\ell}{2}} \text{ and } P_{{\rm blue}, C\ell}\leq \Big(1-p_C-\eps_1 \Big)^{\binom{C\ell}{2}}.\]
}
Also, set $n=(p_C^{-1/2}+\eps)^\ell$ for some $\eps\ll \eps_1$, and let us show that the red/blue coloring of $K_n$ defined using the Gaussian random geometric graph has no red $\ell$-clique and no blue $C\ell$-clique with high probability.

By the union bound the probability that this coloring has a red $\ell$-clique is at most
{\smaller[0.3]
\[\binom{n}{\ell}\cdot P_{\red, \ell}\leq \frac{n^{\ell}}{\ell!}(p_C-\eps_1)^{\frac{\ell(\ell-1)}{2}}\leq \frac{1}{\ell!}\Big(n(p_C-\eps_1)^{\frac{\ell-1}{2}}\Big)^\ell=\frac{1}{\ell!}\Big((p_C^{-1/2}+\eps)^{\ell}(p_C-\eps_1)^{\frac{\ell-1}{2}}\Big)^\ell.\]
}
The crucial observation is that, inside the parentheses, we have an exponential term in $\ell$ with the base of the exponent being $(p_C^{-1/2}+\eps)(p_C-\eps_1)^{1/2}<1$. Also, note that the additive constant $-1/2$ in the exponent $\frac{\ell-1}{2}$ does not affect the calculation, since $\ell$ is very large compared to $p_C$. Hence, the probability that there exists a red $\ell$-clique is at most $\frac{1}{\ell!}$.

Similarly, the probability that the coloring contains a blue $C\ell$-clique is at most
{\smaller[0.3]
\[\binom{n}{C\ell}\cdot P_{\blue, C\ell}\leq \frac{n^{C\ell}}{(C\ell)!}(1-p_C-\eps_1)^{\frac{C\ell(C\ell-1)}{2}}\leq \frac{1}{(C\ell)!}\Big(n(1-p_C-\eps_1)^{\frac{C\ell-1}{2}}\Big)^{C\ell}.\]
}
Again, the base of the exponential term inside the parentheses is $(p_C^{-1/2}+\eps)(1-p_C-\eps_1)^{C/2}<1$ (which follows from $(1-p_C)^C=p_C$). Hence, the probability that there exists a blue $C\ell$-clique is at most $\frac{1}{(C\ell)!}$. This shows that there is an outcome without red $\ell$-cliques or blue $C\ell$-cliques, completing the proof.
\end{proof}

\paragraph{Organization.} In Section~\ref{sec:prelims}, we collect a number of well-known results about the Gaussian distribution, all of whose proofs are rather standard. Although we provide the complete proofs of these results in Appendix~\ref{sec:appendix_1}, we suggest the reader to skim this section by only reading the statements, since the heart of the paper lies in Sections~\ref{sec:perfect} and~\ref{sec:induction}.
In Section~\ref{sec:perfect}, we reduce bounding the probabilities $P_{\red, r}, P_{\blue, r}$ in Proposition~\ref{prop:clique_probabilities} to an easier problem of bounding the probability that random Gaussian vectors $\bx_1, \dots, \bx_r$ both form a monochromatic clique and have the inner products of expected size (i.e. that they form the so-called perfect sequence). Finally, in Section~\ref{sec:induction} we present the most important part of our argument, in which we give the remaining ingredients needed to prove Proposition~\ref{prop:clique_probabilities}.
We optimize our methods and give the proof of $R(\ell, C\ell)\geq ((1+\eps)p_C^{-1/2})^{\ell}$ in Appendix~\ref{sec:appendix_2}. 

\section{Preliminaries}\label{sec:prelims}

In this section, we will recall a collection of rather standard facts related to Gaussian random variables, which we mostly provide for reference.
We will begin by proving two concentration inequalities. Then, we will discuss truncated Gaussian random variables, and some of their properties. Finally, we conclude by bounding exponential moments of a certain quadratic function of truncated Gaussians, which will play an important role in our proofs later on. The first result we present is the concentration inequality for norms of high-dimensional Gaussian vectors.

\begin{lemma}\label{lemma:norm_concentration}
If $\bx\sim \cN(0, \frac{1}{d}I_d)$ and $\delta\in (0, 1)$, then $\Pb\big[\|\bx\|_2\in (1-\delta, 1+\delta)\big]\geq 1-2\exp(-\delta^2 d/10)$.
\end{lemma}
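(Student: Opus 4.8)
The plan is to write $\|\bx\|_2^2 = \frac{1}{d}\sum_{i=1}^d Z_i^2$ where $Z_1,\dots,Z_d \sim \mathcal{N}(0,1)$ are independent standard Gaussians, so that $S := d\|\bx\|_2^2 = \sum_i Z_i^2$ is a chi-squared random variable with $d$ degrees of freedom, and $\mathbb{E}[S] = d$. The event $\|\bx\|_2 \in (1-\delta, 1+\delta)$ is implied by $\|\bx\|_2^2 \in (1-\delta, 1+\delta)$ (since $1-\delta < (1-\delta)^2$ is false — careful: $(1-\delta)^2 < 1-\delta$, so I should instead note that $\|\bx\|_2^2 \in ((1-\delta)^2, (1+\delta)^2)$ is what we want, and it suffices to prove $\|\bx\|_2^2 \in (1-\delta, 1+\delta)$ since $(1-\delta)^2 \le 1-\delta$ and $1+\delta \le (1+\delta)^2$). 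Hence it is enough to bound $\Pb[S \le (1-\delta)d]$ and $\Pb[S \ge (1+\delta)d]$.

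The core step is a standard Chernoff/Bernstein bound for chi-squared concentration. Using the moment generating function $\mathbb{E}[e^{tZ_i^2}] = (1-2t)^{-1/2}$ for $t < 1/2$, a Markov-inequality argument on $e^{tS}$ (and on $e^{-tS}$) gives, after optimizing over $t$, the Laurent–Massart-type bounds
{\smaller[0.3]
\[\Pb[S \ge d + 2\sqrt{dx} + 2x] \le e^{-x}, \qquad \Pb[S \le d - 2\sqrt{dx}] \le e^{-x}.\]
}
I would then choose $x = \delta^2 d / c$ for a suitable absolute constant $c$ so that both deviation terms $2\sqrt{dx} + 2x$ and $2\sqrt{dx}$ are at most $\delta d$; since $\delta \in (0,1)$, the quadratic term $2x = 2\delta^2 d/c$ is dominated by the linear-in-$\delta$ term, and taking $c = 10$ comfortably suffices to make both tail events contained in the complement of $\{S \in ((1-\delta)d, (1+\delta)d)\}$. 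A union bound over the two tails then yields $\Pb[\|\bx\|_2^2 \notin (1-\delta, 1+\delta)] \le 2e^{-\delta^2 d/10}$, which is exactly the claimed inequality.

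Alternatively — and perhaps cleaner to write up — I would avoid the two-sided optimization and just directly bound $\Pb[e^{tS} \ge e^{t(1+\delta)d}]$ and $\Pb[e^{-tS} \ge e^{-t(1-\delta)d}]$ with a single convenient choice such as $t = \delta/4$, expand $-\tfrac12\log(1 - 2t) = t + t^2 + O(t^3)$ (valid since $2t = \delta/2 < 1$), and check that the exponent in each case is at most $-\delta^2 d/10$ using $\delta < 1$. The only mild obstacle is purely bookkeeping: making sure the constant in the exponent really comes out as $10$ (rather than something larger) and that the step from the squared-norm event to the norm event goes in the correct direction; both are routine once the inequalities $(1-\delta)^2 \le 1 - \delta$ and $(1+\delta)^2 \ge 1+\delta$ are invoked. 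No step here presents a genuine difficulty — this is a textbook concentration estimate — so the write-up should be short.
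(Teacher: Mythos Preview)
Your proposal is correct and follows essentially the same route as the paper: reduce from $\|\bx\|_2$ to $\|\bx\|_2^2$ via $(1-\delta)^2\le 1-\delta$ and $1+\delta\le(1+\delta)^2$, recognize $d\|\bx\|_2^2\sim\chi_d^2$, apply the Laurent--Massart tail bounds with $x=\delta^2 d/10$, and check $2\sqrt{dx}+2x\le\delta d$ using $\delta<1$. The paper does exactly this (quoting Laurent--Massart as a black box rather than rederiving it from the MGF), so your alternative direct-MGF computation is unnecessary here.
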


\noindent
Observe that $d\|\bx\|_2^2$ is a sum of squares of $d$ independent standard one-dimensional Gaussians, and therefore $d\|\bx\|_2^2$ follows the so-called $\chi_d^2$ distribution with $d$ degrees of freedom. Hence, Lemma~\ref{lemma:norm_concentration} follows as a simple application of the famous Laurent-Massart concentration inequality for $\chi^2_d$.

\begin{theorem}[Lemma 1 in \cite{LM}]\label{thm:laurent_massart}
Let $Y\sim\chi^2_d$ and let $t\ge0$. Then
{\smaller[0.3]
\[ \Pb\big[Y-d\ge 2\sqrt{dt}+2t\big]\le e^{-t} \quad\text{and}\quad \Pb\big[d-Y\ge 2\sqrt{dt}\big]\le e^{-t}. \]
}
\end{theorem}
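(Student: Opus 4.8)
The plan is to prove this by the classical exponential-moment (Chernoff--Bernstein) method, handling the two tails separately. Write $Y=\sum_{i=1}^d Z_i^2$ with $Z_1,\dots,Z_d$ independent standard Gaussians; the one genuinely necessary input is the identity $\bE[e^{\lambda Z^2}]=(1-2\lambda)^{-1/2}$ for $\lambda<1/2$ (obtained by completing the square in the Gaussian integral), so that by independence $\bE[e^{\lambda(Y-d)}]=e^{-\lambda d}(1-2\lambda)^{-d/2}$ for $0\le\lambda<1/2$, and $\bE[e^{-\lambda(Y-d)}]=e^{\lambda d}(1+2\lambda)^{-d/2}$ for every $\lambda\ge 0$. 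In both cases Markov's inequality gives $\Pb[\pm(Y-d)\ge x]\le e^{-\lambda x}\,\bE[e^{\pm\lambda(Y-d)}]$, so it remains to bound the cumulant generating functions and to optimize over $\lambda$.

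For the upper tail I would first record the elementary inequality
\[ -\lambda-\tfrac12\log(1-2\lambda)\ \le\ \frac{\lambda^2}{1-2\lambda}\qquad(0\le\lambda<1/2), \]
which follows because the difference of the two sides vanishes at $\lambda=0$ and has derivative $2\lambda^2/(1-2\lambda)^2\ge 0$. Hence $\log\bE[e^{\lambda(Y-d)}]\le d\lambda^2/(1-2\lambda)$ and $\Pb[Y-d\ge x]\le \exp\!\big(-\lambda x+ d\lambda^2/(1-2\lambda)\big)$. The cleanest way to optimize is the substitution $\mu=\lambda/(1-2\lambda)$ (equivalently $\lambda=\mu/(1+2\mu)$), which turns the exponent into $\mu(d\mu-x)/(1+2\mu)$; taking $\mu=\sqrt{t/d}$ and $x=2\sqrt{dt}+2t$ makes this equal to exactly $-t$. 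Concretely this is $\lambda=\sqrt{t}/(\sqrt d+2\sqrt t)\in[0,1/2)$, which gives the first claimed bound.

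For the lower tail the argument is the same but simpler: from $\lambda-\tfrac12\log(1+2\lambda)\le\lambda^2$ for $\lambda\ge0$ (again equality at $0$, and derivative $4\lambda^2/(1+2\lambda)\ge 0$) we obtain $\log\bE[e^{-\lambda(Y-d)}]\le d\lambda^2$, so $\Pb[d-Y\ge x]\le\exp(-\lambda x+d\lambda^2)$; choosing $\lambda=x/(2d)$ yields $\exp(-x^2/(4d))$, and $x=2\sqrt{dt}$ gives $e^{-t}$.

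There is no real obstacle: once the moment generating function is in hand the proof is entirely mechanical. The only two places needing a touch of care are the two one-variable inequalities above (each dispatched by a single differentiation) and the choice of the optimal $\lambda$, which is precisely what the substitution $\mu=\lambda/(1-2\lambda)$ is designed to make transparent; one could instead verify the stated value of $\lambda$ by direct substitution, at the cost of a slightly more opaque computation.
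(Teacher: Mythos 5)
Your proof is correct. Note, however, that the paper does not prove this lemma at all: it is quoted as ``Lemma 1 in \cite{LM}'' (Laurent--Massart), with no proof supplied, so there is no in-paper argument to compare against. What you have written is, in fact, essentially the original Laurent--Massart proof: the Chernoff/Cram\'er method applied to $Y-d$, using the explicit $\chi^2$ moment generating function, the log-MGF bound $-\lambda-\tfrac12\log(1-2\lambda)\le\lambda^2/(1-2\lambda)$ for the upper tail, and the cleaner bound $\lambda-\tfrac12\log(1+2\lambda)\le\lambda^2$ for the lower tail, followed by optimization in $\lambda$ (which your substitution $\mu=\lambda/(1-2\lambda)$ makes pleasantly transparent). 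All the elementary inequalities you invoke check out (the derivatives of the differences are $2\lambda^2/(1-2\lambda)^2$ and $4\lambda^2/(1+2\lambda)$ respectively, both nonnegative), and the final choices $\lambda=\sqrt t/(\sqrt d+2\sqrt t)$ and $\lambda=x/(2d)$ give exactly $e^{-t}$. So: correct, complete, and standard — the only remark worth making is that the paper leaves this as a citation.
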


Next, we give a one-sided tail bound for the length of a projection of a Gaussian vector to a lower-dimensional subspace. For a vector $\bx\in \bR^d$ and a subspace $W\subseteq \bR^d$, let $\pi_W(\bx)$ be the projection of $\bx$ to $W$.

\begin{lemma}\label{lemma:norm_projection}
Let $1\leq s\leq C\ell$ and let $W$ be any $s$-dimensional subspace of $\bR^d$. If $\bx\sim \cN(0, \frac{1}{d}I_d)$ is a random Gaussian vector and $\alpha\in (1, \infty)$ such that $\alpha^2\ell/8\geq s$, then
{\smaller[0.3]
\[\Pb\Big[\|\pi_W(\bx)\|_2\geq \frac{\alpha\sqrt{\ell}}{\sqrt{d}}\,\,\Big]\leq \exp(-\alpha^2\ell/8).\]
}
\end{lemma}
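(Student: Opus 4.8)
The plan is to reduce the statement to the Laurent--Massart inequality (Theorem~\ref{thm:laurent_massart}) after identifying the distribution of $\|\pi_W(\bx)\|_2^2$. Since $\bx\sim\cN(0,\frac1d I_d)$ and the Gaussian measure is rotationally invariant, for any fixed $s$-dimensional subspace $W$ the projection $\pi_W(\bx)$ has the same distribution as the projection onto the coordinate subspace spanned by the first $s$ standard basis vectors; hence $d\|\pi_W(\bx)\|_2^2$ is a sum of $s$ squares of independent standard one-dimensional Gaussians, i.e. $d\|\pi_W(\bx)\|_2^2\sim\chi^2_s$. So the event in question is $\{Y\ge \alpha^2\ell\}$ where $Y\sim\chi^2_s$ with $s\le C\ell$.

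Next I would apply the first tail bound of Theorem~\ref{thm:laurent_massart} with $d$ replaced by $s$: for any $t\ge0$, $\Pb[Y\ge s+2\sqrt{st}+2t]\le e^{-t}$. Using $s\le C\ell$ and $s+2\sqrt{st}+2t\le (\sqrt{s}+\sqrt{2t})^2+s$ — or more simply the crude bound $s+2\sqrt{st}+2t\le 2s+3t\le 2C\ell+3t$ valid for $t\ge s$ — it suffices to choose $t$ so that $2C\ell+3t\le \alpha^2\ell$ and $e^{-t}\le (p/10)^{10C\ell}$. The second condition reads $t\ge 10C\ell\log(10/p)$. With $\alpha=100C\log(10/p)$ we have $\alpha^2\ell=10^4C^2\log^2(10/p)\,\ell$, which comfortably dominates $2C\ell+3\cdot 10C\ell\log(10/p)$ since $\log(10/p)\ge 1$ (as $p<1/2<10$), so the choice $t=10C\ell\log(10/p)$ works and also satisfies $t\ge s$ because $s\le C\ell\le t$. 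Plugging this $t$ into the Laurent--Massart bound gives $\Pb[\|\pi_W(\bx)\|_2\ge \alpha\sqrt{\ell}/\sqrt d]\le e^{-t}=(p/10)^{10C\ell}$, as desired.

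The only mild subtlety — and the one place to be slightly careful rather than an actual obstacle — is the reduction showing $d\|\pi_W(\bx)\|_2^2\sim\chi^2_s$ for an arbitrary (not axis-aligned) subspace $W$; this follows from the orthogonal invariance of the isotropic Gaussian, or equivalently by writing $\pi_W(\bx)$ in an orthonormal basis of $W$ and noting the coordinates are i.i.d.\ $\cN(0,1/d)$. Everything else is a routine verification that the chosen $t$ makes both inequalities hold, using only $\log(10/p)\ge 1$ and $s\le C\ell$; I would keep the constants deliberately loose since the lemma only needs some admissible $\alpha$, and $\alpha=100C\log(10/p)$ leaves ample room.
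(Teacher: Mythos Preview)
Your proposal is correct and follows essentially the same route as the paper: reduce to $d\|\pi_W(\bx)\|_2^2\sim\chi_s^2$ via rotational invariance, then apply the Laurent--Massart bound with $t$ a fixed multiple of $\alpha\ell$ (you take $t=\alpha\ell/10$, the paper takes $t=\alpha\ell/8$). The only cosmetic difference is that the paper first weakens $\{\chi_s^2\ge\alpha^2\ell\}$ to $\{\chi_s^2\ge\alpha\ell\}$ before applying Laurent--Massart, whereas you work directly with $\alpha^2\ell$ and use the AM--GM bound $2\sqrt{st}\le s+t$; both are routine. One tiny quibble: your parenthetical ``as $p<1/2<10$'' is not quite the right justification for $\log(10/p)\ge1$ --- the relevant fact is simply $p<1/2\Rightarrow 10/p>20\Rightarrow\log(10/p)>\log 20>1$.
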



Since Gaussian random variables are the crucial players in this paper, let us denote by $\phi, \Phi$ the probability density function and the cumulative distribution function of the standard normal $\cN(0, 1)$. Among their many useful properties, we will often use the following bound on the Mills ratio: for every negative $t<0$ we have
{\smaller[0.3]
\begin{equation}\label{eqn:mills_ratio}
|t|\leq \frac{\phi(t)}{\Phi(t)}\leq |t|+\frac{1}{|t|}.
\end{equation}
}
This follows from a simple integration by parts, see e.g. \cite{Gor41}. Another standard fact related to the Gaussian distribution is that the function $\Phi(t)$ is log-concave (i.e. $\log \Phi(t)$ is a concave function). This will be quite useful, since it allows us to derive that $\log \Phi(t+\eps)\leq \log \Phi(t)+\eps (\log \Phi(t))'$, which can also be read as
{\smaller[0.3]
\begin{equation}\label{eqn:log_concavity}
    \Phi(t+\eps)\leq \Phi(t)e^{\eps\phi(t)/\Phi(t)}.
\end{equation}
}

Truncated Gaussian random variables will also play an important role in the paper. If $Z\sim \cN(0, \sigma^2)$ is a one-dimensional Gaussian and $(a, b)\subseteq \bR$ an interval, the \textit{truncated Gaussian} is the distribution of $Z$ conditioned on $Z\in (a, b)$. Our distribution will be truncated only on one side, i.e. we will only consider cases where $a=-\infty$ or $b=+\infty$. If $a\neq -\infty$, we have a lower truncated variable, and if $b\neq+\infty$ we have an upper truncated variable. We begin our discussion of truncated Gaussians by computing their expectation.

\begin{lemma}\label{lemma:truncated_expectation}
Let $b, \eps$ be real numbers and let $X\sim \cN(0, \frac{1}{d})$. Then, we have the following four formulas
{\smaller[0.3]
\begin{align*}
    &\bE\bigg[X\Big| X\leq \frac{b}{\sqrt{d}}\bigg]=-\frac{\phi(b)}{\Phi(b)\sqrt{d}} \quad \,\,\,\,\text{ and }\bE\bigg[X\Big| X\leq \frac{b+\eps}{\sqrt{d}}\bigg]=-\frac{\phi(b)}{\Phi(b)\sqrt{d}}+O\bigg(\frac{\eps}{\sqrt{d}}\bigg),\\
    &\bE\bigg[X\Big| X\geq \frac{b}{\sqrt{d}}\bigg]=\frac{\phi(b)}{(1-\Phi(b))\sqrt{d}}\text{ and } \bE\bigg[X\Big| X\geq \frac{b+\eps}{\sqrt{d}}\bigg]=\frac{\phi(b)}{(1-\Phi(b))\sqrt{d}}+O\bigg(\frac{\eps}{\sqrt{d}}\bigg).
\end{align*}
}
\end{lemma}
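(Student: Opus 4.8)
The plan is to compute all four conditional expectations by a direct integration, deriving the first formula in each pair explicitly and then obtaining the ``$+\eps$'' versions by a perturbation argument using the Mills ratio bound~\eqref{eqn:mills_ratio}. For the first formula, note that if $X\sim\cN(0,\tfrac1d)$ then $X=\tfrac{Z}{\sqrt d}$ with $Z\sim\cN(0,1)$, so $\bE[X\mid X\leq \tfrac{b}{\sqrt d}]=\tfrac{1}{\sqrt d}\,\bE[Z\mid Z\leq b]$. Now
{\smaller[0.3]
\[\bE[Z\mid Z\leq b]=\frac{1}{\Phi(b)}\int_{-\infty}^{b} t\,\phi(t)\,dt=\frac{1}{\Phi(b)}\big[-\phi(t)\big]_{-\infty}^{b}=-\frac{\phi(b)}{\Phi(b)},\]
}
using $\phi'(t)=-t\phi(t)$, which gives the first identity. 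The identity $\bE[Z\mid Z\geq b]=\tfrac{\phi(b)}{1-\Phi(b)}$ follows the same way, or by symmetry $Z\mapsto -Z$.

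Next I would handle the perturbed versions. Writing $g(b)=\bE[Z\mid Z\leq b]=-\phi(b)/\Phi(b)$, the goal is to show $g(b+\eps)=g(b)+O(\eps)$, i.e. that $g$ has bounded derivative. A short computation gives
{\smaller[0.3]
\[g'(b)=-\frac{\phi'(b)\Phi(b)-\phi(b)^2}{\Phi(b)^2}=\frac{b\phi(b)}{\Phi(b)}+\Big(\frac{\phi(b)}{\Phi(b)}\Big)^2,\]
}
and I want to bound this uniformly in $b$. For $b\geq 0$ this is immediate since $\phi(b)/\Phi(b)\leq \phi(0)/\Phi(0)=\sqrt{2/\pi}$ and $b\phi(b)$ is bounded. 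For $b<0$ the Mills ratio bound~\eqref{eqn:mills_ratio} gives $|b|\leq \phi(b)/\Phi(b)\leq |b|+1/|b|$, so $g'(b)=-|b|\cdot\tfrac{\phi(b)}{\Phi(b)}+(\tfrac{\phi(b)}{\Phi(b)})^2$, and substituting the two-sided bound, the leading $b^2$ terms cancel and one is left with a quantity bounded by an absolute constant (the cross terms contribute $O(1)$). Hence $|g'|=O(1)$ everywhere, and by the mean value theorem $|g(b+\eps)-g(b)|=O(\eps)$, which after dividing by $\sqrt d$ yields the second formula. The lower-truncated case $\bE[Z\mid Z\geq b+\eps]$ is identical after the substitution $Z\mapsto -Z$.

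The only real subtlety — and hence the main obstacle, though it is a mild one — is justifying that the $O(\eps)$ is \emph{uniform}, i.e. that the implied constant in $g'(b)=O(1)$ does not blow up as $b\to -\infty$. This is precisely where the upper Mills bound $\phi(b)/\Phi(b)\leq |b|+1/|b|$ does the work: without it, one would only know $g'(b)\geq 0$ and the cancellation in the $b^2$ terms would not be visible. One should also note that the statement implicitly treats $\eps$ as $O(1)$ (or at least bounded), so that the mean value estimate over the interval between $b$ and $b+\eps$ is legitimate; this matches how the lemma is applied later. With that observed, all four formulas follow from the two explicit computations plus the derivative bound.
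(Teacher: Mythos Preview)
Your proof is correct and follows essentially the same route as the paper's: rescale to the standard Gaussian $Z$, compute $\int_{-\infty}^b t\phi(t)\,dt=-\phi(b)$ for the exact formulas, then show the derivative of $g(b)=-\phi(b)/\Phi(b)$ is uniformly bounded via the Mills ratio and conclude by the mean value theorem. One small point to tighten: for $b<0$ your Mills-ratio substitution actually yields $g'(b)=m(m-|b|)\le (|b|+1/|b|)\cdot(1/|b|)=1+1/b^2$, which is not $O(1)$ as $b\to 0^-$; the paper avoids this by invoking the Mills bound only for $b\le -1$ and using continuity of $g'$ on the remaining compact range, which you should make explicit.
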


Let us close this section by showing how to bound the exponential moments of a certain quadratic sum of truncated Gaussian random variables. Let us begin by recalling the notion of subgaussian random variables. They will be important to us since they provide a good framework for bounding exponential moments, and since truncated Gaussians are subgaussian.

A random variable $X$ is called \textit{subgaussian} if there exists a positive number $\sigma^2$ such that
{\smaller[0.3]
\[\bE\big[\exp\big(\lambda (X-\bE X)\big)\big]\leq \exp\big(\sigma^2\lambda^2/2\big) \text{ for all }\lambda\in \bR.\]
}
The smallest such constant $\sigma^2$ is called the \textit{variance proxy}. The paper \cite{BMA24} shows that the optimal variance proxy of a truncated Gaussian is at most the variance of the corresponding Gaussian.

Subgaussian random variables possess many useful properties, and here we will survey only those which we will need in the paper. For a more thorough treatment, we suggest to consult Vershynin's book \cite{Ver18}. A basic observation, which follows directly from the definition, is that if $X_1, \dots, X_k$ are independent subgaussians with variance proxies $\sigma_1^2, \dots, \sigma_k^2$, then $X_1+\dots+X_k$ is also subgaussian with variance proxy $\sigma_1^2+\dots+\sigma_k^2$. The most important property of subgaussians will be the following bound on the exponential moments of their squares.

\begin{lemma}\label{lemma:exponential_square_moments}
Let $X$ be a subgaussian of variance proxy $\sigma^2$ and with $\bE[X]=0$, and let $0\leq \lambda< \frac{1}{2\sigma^2}$. Then
{\smaller[0.3]
\[\bE[e^{\lambda X^2}]\leq 1+\frac{4\lambda \sigma^2}{1-2\lambda\sigma^2}.\]
}
\end{lemma}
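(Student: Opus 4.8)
The plan is to bound $\bE[e^{\lambda X^2}]$ by expanding the exponential as a power series and controlling the even moments of $X$ using the subgaussian hypothesis. First I would recall the standard moment estimate for centered subgaussians: if $X$ has variance proxy $\sigma^2$ and $\bE[X]=0$, then $\bE[X^{2k}]\leq k! \,(4\sigma^2)^k$ for all integers $k\geq 1$ (this follows from integrating the subgaussian tail bound $\Pb[|X|\geq t]\leq 2e^{-t^2/(2\sigma^2)}$, which in turn follows from the Markov inequality applied to $e^{\lambda X}$ after optimizing over $\lambda$; alternatively it can be extracted directly from the moment generating function bound). Since this is exactly the kind of standard fact the paper is happy to quote from Vershynin's book, I would simply cite it rather than prove it.

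With the moment bound in hand, the core computation is
{\smaller[0.3]
\[
\bE[e^{\lambda X^2}]=1+\sum_{k\geq 1}\frac{\lambda^k}{k!}\bE[X^{2k}]\leq 1+\sum_{k\geq 1}\frac{\lambda^k}{k!}\,k!\,(4\sigma^2)^k=1+\sum_{k\geq 1}(4\lambda\sigma^2)^k.
\]
}
The geometric series converges precisely when $4\lambda\sigma^2<1$, and the hypothesis $\lambda<\frac{1}{2\sigma^2}$ gives $4\lambda\sigma^2<2$, which is not quite enough, so I would need to be slightly more careful: either use the sharper moment bound $\bE[X^{2k}]\leq 2\cdot k!\,(2\sigma^2)^k$ (again standard), which yields $\bE[e^{\lambda X^2}]\leq 1+2\sum_{k\geq 1}(2\lambda\sigma^2)^k=1+\frac{4\lambda\sigma^2}{1-2\lambda\sigma^2}$ and matches the claimed bound exactly, or carry the constant $4$ and adjust. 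Summing the geometric series $\sum_{k\geq 1}r^k=\frac{r}{1-r}$ with $r=2\lambda\sigma^2\in[0,1)$ then gives the stated inequality $\bE[e^{\lambda X^2}]\leq 1+\frac{4\lambda\sigma^2}{1-2\lambda\sigma^2}$.

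The only genuine subtlety — and the step I'd flag as the place to be careful rather than a real obstacle — is getting the right numerical constants in the moment bound so that they propagate to exactly the constants $4$ and $2$ appearing in the statement; the cleanest route is to prove $\bE[X^{2k}]\leq 2\,k!\,(2\sigma^2)^k$ from the tail bound $\Pb[|X|> t]\le 2e^{-t^2/(2\sigma^2)}$ via $\bE[X^{2k}]=\int_0^\infty 2kt^{2k-1}\Pb[|X|>t]\,dt$ and the Gamma-function identity $\int_0^\infty t^{2k-1}e^{-t^2/(2\sigma^2)}\,dt=\tfrac12(2\sigma^2)^k\,(k-1)!$. Everything else is the routine series manipulation above, plus noting that the interchange of expectation and sum is justified by monotone convergence since all terms are nonnegative.
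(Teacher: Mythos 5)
Your proof is correct, and it takes a genuinely different computational route from the paper's, although both ultimately rest on the same tail bound $\Pb[|X|\geq u]\leq 2e^{-u^2/(2\sigma^2)}$ (which the paper derives in-line from Markov applied to $e^{\theta X}$, and which you quote). After that point the two arguments diverge. The paper applies the layer-cake formula directly to $e^{\lambda X^2}$: it writes $\bE[e^{\lambda X^2}] = 1 + \int_1^\infty \Pb[e^{\lambda X^2}\geq t]\,dt$, substitutes the tail bound, and evaluates a single power-law integral $\int_1^\infty t^{-1/(2\lambda\sigma^2)}\,dt$, which converges exactly when $2\lambda\sigma^2<1$ and yields the stated constant in one step. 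You instead expand the exponential as a power series, bound each even moment via $\bE[X^{2k}]\leq 2\,k!\,(2\sigma^2)^k$ (which you correctly derive from the same tail bound and the Gamma integral $\int_0^\infty t^{2k-1}e^{-t^2/(2\sigma^2)}\,dt=\tfrac12(2\sigma^2)^k(k-1)!$), and sum a geometric series. You were right to flag and then discard the weaker bound $k!(4\sigma^2)^k$ — it would only cover $\lambda<1/(4\sigma^2)$, half the required range — and the sharper constant $2\,k!\,(2\sigma^2)^k$ is precisely what makes the geometric series match $1+\frac{4\lambda\sigma^2}{1-2\lambda\sigma^2}$ with the correct radius of convergence. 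The paper's route is a bit shorter because it skips the moment-by-moment step and the interchange-of-sum-and-integral justification; yours has the minor advantage of producing the reusable moment estimate $\bE[X^{2k}]\leq 2\,k!\,(2\sigma^2)^k$ along the way. Both are standard and both give the exact claimed constants.
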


Finally, the reason we introduced subgaussian random variables is to prove the following lemma, which says that the exponential moments of a certain quadratic sum of subgaussian random variables are essentially controlled by the expectation of this sum.

\begin{lemma}\label{lemma:exponential_moment_special_case}
Let $X_1, \dots, X_k$ be independent subgaussian random variables with variance proxy $1/d$, and let $\lambda\in (-\infty, \infty)$ such that $d\geq 4|\lambda| k$. If $S=\sum_{1\leq i<j\leq k}X_iX_j$, then
{\smaller[0.3]
\[\bE[e^{\lambda S}]\leq \exp\bigg( \lambda \bE[S] + \frac{\lambda^2k^2}{d}\sum_{j=1}^k (\bE X_j)^2 +\frac{4|\lambda| k}{d}\bigg).\]
}
\end{lemma}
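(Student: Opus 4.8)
The plan is to first isolate the contribution of the means and then control the remaining fluctuation by Cauchy--Schwarz together with the estimates of Lemma~\ref{lemma:exponential_square_moments}. Write $X_j = \mu_j + Y_j$ with $\mu_j = \bE[X_j]$, so that each $Y_j$ is a centered subgaussian of variance proxy $1/d$ and the $Y_j$ are independent (we use that the $X_j$, hence the $Y_j$, are independent); set $M = \sum_{j=1}^k \mu_j$ and $N = \sum_{j=1}^k Y_j$. Expanding $S = \sum_{i<j} X_i X_j$ and using $\sum_{i<j} a_i a_j = \tfrac{1}{2}\big((\sum_i a_i)^2 - \sum_i a_i^2\big)$ twice, one obtains
\[
S = \tfrac{1}{2}\Big(M^2 - \sum_{j} \mu_j^2\Big) + W + \tfrac{1}{2}\Big(N^2 - \sum_{j} Y_j^2\Big), \qquad W := \sum_{j=1}^k (M - \mu_j) Y_j .
\]
By independence $\bE[X_i X_j] = \mu_i \mu_j$ for $i \neq j$, so $\bE[S] = \tfrac{1}{2}(M^2 - \sum_j \mu_j^2)$, whence $S - \bE[S] = W + \tfrac{1}{2}(N^2 - \sum_j Y_j^2)$. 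The gain from this rearrangement is that the a priori uncontrolled quantity $\sum_j \mu_j^2$ has been absorbed entirely into $\bE[S]$: what remains, $W$ and $N$, are honest subgaussians. Indeed $W$ is a linear combination of independent centered subgaussians, hence subgaussian with variance proxy $\tau^2 := \tfrac{1}{d}\sum_j (M - \mu_j)^2$; a short computation expanding the square and using $M^2 = (\sum_j \mu_j)^2 \le k \sum_j \mu_j^2$ gives $\tau^2 \le \tfrac{k^2}{d}\sum_j \mu_j^2$. Likewise $N$ is subgaussian with variance proxy $k/d$.

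It then suffices to show $\bE\big[e^{\lambda(S - \bE[S])}\big] \le \exp\big(\tfrac{\lambda^2 k^2}{d}\sum_j \mu_j^2 + \tfrac{4|\lambda|k}{d}\big)$, and here I would split on the sign of $\lambda$, in each case discarding the term of the favorable sign. If $\lambda \ge 0$, then $\sum_j Y_j^2 \ge 0$ gives $e^{\lambda(S - \bE[S])} \le e^{\lambda W + \frac{\lambda}{2} N^2}$, and Cauchy--Schwarz bounds the expectation of the right-hand side by $\sqrt{\bE[e^{2\lambda W}]}\,\sqrt{\bE[e^{\lambda N^2}]}$. The first factor is at most $e^{2\lambda^2 \tau^2} \le e^{2\lambda^2 k^2 \sum_j \mu_j^2 / d}$ straight from subgaussianity of $W$. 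For the second, the hypothesis $d \ge 4|\lambda| k$ ensures $\lambda < \tfrac{d}{2k}$, so Lemma~\ref{lemma:exponential_square_moments} applied to $N$ gives $\bE[e^{\lambda N^2}] \le 1 + \tfrac{4\lambda k/d}{1 - 2\lambda k/d} \le 1 + 8\lambda k/d \le e^{8\lambda k/d}$, using $2\lambda k/d \le \tfrac12$. Taking square roots and multiplying yields precisely the claimed bound.

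If $\lambda < 0$, set $\mu = -\lambda$; now $N^2 \ge 0$ gives $e^{\lambda(S - \bE[S])} \le e^{-\mu W + \frac{\mu}{2}\sum_j Y_j^2}$, and Cauchy--Schwarz bounds the expectation by $\sqrt{\bE[e^{-2\mu W}]}\,\sqrt{\bE[e^{\mu \sum_j Y_j^2}]}$. The first factor is again at most $e^{2\mu^2 \tau^2} \le e^{2\mu^2 k^2 \sum_j \mu_j^2/d}$. For the second, independence gives $\bE[e^{\mu \sum_j Y_j^2}] = \prod_j \bE[e^{\mu Y_j^2}]$, and since $d \ge 4\mu k$ forces $\mu < \tfrac{d}{2}$, Lemma~\ref{lemma:exponential_square_moments} applied to each $Y_j$ gives $\bE[e^{\mu Y_j^2}] \le 1 + \tfrac{4\mu/d}{1 - 2\mu/d} \le e^{8\mu/d}$, so the product is at most $e^{8\mu k/d}$. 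Assembling the factors gives $\bE[e^{\lambda S}] \le e^{\lambda \bE[S]} e^{\mu^2 k^2 \sum_j \mu_j^2/d} e^{4\mu k/d}$, which is the desired inequality since $\mu^2 = \lambda^2$ and $\mu = |\lambda|$. The only point requiring care is the opening rearrangement — keeping the bookkeeping straight so that $\bE[S]$ cancels the $\sum_j \mu_j^2$ term exactly; once past that, the argument is just Cauchy--Schwarz plus the two standard subgaussian estimates already recorded above.
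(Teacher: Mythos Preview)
Your proof is correct and follows essentially the same approach as the paper: the decomposition $S-\bE[S]=W+\tfrac12(N^2-\sum_j Y_j^2)$ is exactly the paper's $\Sigma_1+\Sigma_2$ written in scalar rather than matrix notation, and the subsequent Cauchy--Schwarz split together with the sign-dependent treatment of the quadratic term (using $N^2$ when $\lambda\ge 0$ and $\sum_j Y_j^2$ when $\lambda<0$) mirrors the paper's argument precisely. The only cosmetic difference is that you drop the favorable-sign term before applying Cauchy--Schwarz whereas the paper applies Cauchy--Schwarz first; the resulting bounds are identical.
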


\section{Perfect sequences}\label{sec:perfect}

In this section, we will reduce the problem of bounding $P_{\red, \ell}, P_{\blue, C\ell}$ to a simpler problem. Recall that in general, for $1\leq r\leq C\ell$, $P_{\red, r}$ denotes the probability that $r$ independent Gaussian vectors $\bx_1, \dots, \bx_r$ form a red clique. Observe that such random vectors are typically almost orthogonal to each other, because of Lemma~\ref{lemma:norm_projection}.

It turns out that it is much simpler to analyze the behavior of the sequence $\bx_1, \dots, \bx_r$ if indeed \textit{all} inner products $\langle \bx_i, \bx_j\rangle$ are small in absolute value. In other words, it is much easier to analyze the probability that a \textit{typical} sequence $\bx_1, \dots, \bx_r$ forms a red clique than a general one, and in both cases we essentially get the same answer.

Following Ma, Shen and Xie \cite{MSX25}, who used a very similar notion, we say that $\bx_1, \dots, \bx_r$ is a \textit{perfect sequence} if for all $1\leq i\leq r$ we have:
{\smaller[0.3]
\[\|\bx_i\|_2\in (1-\delta, 1+\delta) \quad\text{ and }\quad \|\pi_{{\rm span} \{\bx_1, \dots, \bx_{i-1}\}} (\bx_i)\|_2\leq \frac{\alpha\sqrt{\ell}}{\sqrt{d}},\]
}
where $\alpha=10\sqrt{C\log(10/p)}$ and $\delta=\alpha d^{-1/4}$.

Let $P_{\red, r}^*$ be the probability that $\bx_1, \dots, \bx_r$ form a red clique and a perfect sequence, and let $P_{\blue, r}^*$ be defined analogously.
In light of this definition, we now have two tasks - we need to bound $P_{\red, r}^*$ and we need to show that $P_{\red, r}$ and $P_{\red, r}^*$ are essentially the same. The first task will be performed via the following proposition.

\begin{proposition}\label{prop:perfect_probabilities}
Let $C>1, p\in (0, 1/2)$ be real numbers and $\ell, d=D^2\ell^2$ be large positive integers, where $D$ is sufficiently larger than $C$. For every integer $3\leq r\leq C\ell$, we have
{\smaller[0.3]
\begin{align}
    &P_{\red, r}^*\leq p^{\binom{r}{2}}\exp\bigg(-\big(1+O_p(D^{-1})\big)\frac{a^3}{p^3\sqrt{d}}\binom{r}{3}\bigg)\label{eq:perfect_probabilities_red}\\
    &P_{\blue, r}^*\leq (1-p)^{\binom{r}{2}}\exp\bigg(\big(1+O_p(D^{-1})\big)\frac{a^3}{(1-p)^3\sqrt{d}}\binom{r}{3}\bigg),\label{eq:perfect_probabilities_blue}
\end{align}
}
where $a=\phi(c_p)=\frac{e^{-c_p^2/2}}{\sqrt{2\pi}}$.
\end{proposition}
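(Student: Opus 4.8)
The plan is to reveal the vectors $\bx_1,\dots,\bx_r$ one at a time and exploit that the Gaussian measure is a product measure. Let $u_1,\dots,u_r$ be the orthonormal vectors produced by Gram--Schmidt on $\bx_1,\dots,\bx_r$, write $W_i=\spn\{\bx_1,\dots,\bx_i\}$, and set $G^{(i)}_m:=\sqrt d\,\langle u_m,\bx_i\rangle$ for $m<i$ and $\nu_i:=\|\pi_{W_{i-1}^\perp}\bx_i\|$. Then the $G^{(i)}_m$ are i.i.d.\ $\cN(0,1)$, independent of the $\nu_i$, and for $j<i$ one computes $\langle\bx_i,\bx_j\rangle=\tfrac{\nu_j}{\sqrt d}\,G^{(i)}_j+\tfrac1d\sum_{m<j}G^{(j)}_mG^{(i)}_m$. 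Being a perfect sequence forces $\nu_i=1+O(\delta)$ and $\sum_{m<i}(G^{(i)}_m)^2\le\alpha^2\ell$ for all $i$. Hence the event that $\bx_1,\dots,\bx_r$ forms a red clique is exactly the family of coordinatewise constraints $G^{(i)}_j<b^{(i)}_j$ $(j<i)$ with $b^{(i)}_j:=-\tfrac{c_p}{\nu_j}-\tfrac1{\nu_j\sqrt d}\sum_{m<j}G^{(j)}_mG^{(i)}_m$, a function of the entries revealed before $G^{(i)}_j$ (and of $\nu_j$); the blue clique corresponds to the reversed constraints $G^{(i)}_j\ge b^{(i)}_j$.

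Revealing the entries in a consistent order, the conditional probability of the $j$-th red constraint is $\Phi(b^{(i)}_j)$, so the standard change of measure to the sequentially truncated law $\Pb'$ — under which each $G^{(i)}_j$, given the past, is a standard Gaussian conditioned below $b^{(i)}_j$ — gives $P_{\red,r}^*\le\bE_{\Pb'}\big[\mathbf 1_{\{\mathrm{perfect}\}}\prod_{j<i}\Phi(b^{(i)}_j)\big]$. By the log-concavity of $\Phi$ (inequality \eqref{eqn:log_concavity}), $\Phi(b^{(i)}_j)\le\Phi(-c_p)e^{\frac ap(b^{(i)}_j+c_p)}=p\,e^{\frac ap(b^{(i)}_j+c_p)}$ since $\phi(-c_p)/\Phi(-c_p)=a/p$, and a rearrangement gives $\sum_{j<i}(b^{(i)}_j+c_p)=-\tfrac1{\sqrt d}\sum_m S_m+(\text{error controlled on }\{\mathrm{perfect}\})$, where $S_m:=\sum_{m<j<i\le r}G^{(j)}_mG^{(i)}_m$ is a sum of products of pairs drawn from the $m$-th \emph{column block} $\{G^{(t)}_m:t>m\}$. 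Thus $P_{\red,r}^*\le p^{\binom r2}\,\bE_{\Pb'}\big[\mathbf 1_{\{\mathrm{perfect}\}}\exp(-\tfrac a{p\sqrt d}\sum_mS_m+O(\mathrm{err}))\big]$. The key point is that $\Pb'$ may be realised by revealing the array one column block at a time (all of block $m$ before block $m+1$): the truncation points $b^{(t)}_m$ depend only on earlier blocks, so conditionally on those the entries of block $m$ are \emph{independent} truncated Gaussians. Peeling the blocks off, the contribution of block $m$ is $\bE[e^{\lambda S}]$ for $S=\sum_{t<t'}X_tX_{t'}$ with $X_t=G^{(t)}_m/\sqrt d$ (a subgaussian of variance proxy $\le1/d$, by \cite{BMA24}), $k=r-m$, and $\lambda=-\tfrac{a\sqrt d}p$; Lemma~\ref{lemma:exponential_moment_special_case} bounds this by $\exp(\lambda\bE S+(\text{error}))$, and because on $\{\mathrm{perfect}\}$ each $b^{(t)}_m$ lies within $O(\delta+\alpha^2/D)$ of $-c_p$, Lemma~\ref{lemma:truncated_expectation} gives $\bE_{\Pb'}[X_t]=-\tfrac a{p\sqrt d}\,(1+O(D^{-1}))$, so $\lambda\bE S=-\binom{r-m}{2}\tfrac{a^3}{p^3\sqrt d}(1+O(D^{-1}))$. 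Summing over $m$ via the hockey-stick identity $\sum_m\binom{r-m}{2}=\binom r3$ yields $P_{\red,r}^*\le p^{\binom r2}\exp\!\big(-(1+O_p(D^{-1}))\tfrac{a^3}{p^3\sqrt d}\binom r3\big)$. (To make the peeling telescope one keeps the partial-column-sum indicators implied by $\mathbf 1_{\{\mathrm{perfect}\}}$ and reveals them interleaved with the blocks, so that at block $m$ one already knows $\sum_{l<m}(G^{(i)}_l)^2\le\alpha^2\ell$ for all $i\ge m$ and the per-block bound is uniform in the outer variables.)

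The blue case is entirely parallel: now the relevant factor is $1-\Phi(b^{(i)}_j)=\Phi(-b^{(i)}_j)\le\Phi(c_p)e^{\frac a{1-p}(-b^{(i)}_j-c_p)}=(1-p)\,e^{\frac a{1-p}(-b^{(i)}_j-c_p)}$, and under the corresponding $\Pb'$ each $G^{(t)}_m$ is conditioned from \emph{below} near $-c_p$, so $\bE_{\Pb'}[X_t]=+\tfrac a{(1-p)\sqrt d}(1+O(D^{-1}))$ and the sign of $\lambda\bE S$ flips, producing $P_{\blue,r}^*\le(1-p)^{\binom r2}\exp\!\big((1+O_p(D^{-1}))\tfrac{a^3}{(1-p)^3\sqrt d}\binom r3\big)$; this reflects the heuristic that cliques are slightly more likely (and independent sets slightly less likely) than in the binomial model. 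I expect the only real difficulty to be the error accounting: one must verify that all the corrections — from $\nu_j\ne1$, from the gap between $b^{(i)}_j$ and $-c_p$, from replacing $\phi(b^{(i)}_j)/\Phi(b^{(i)}_j)$ by $a/p$, from the $\tfrac{\lambda^2k^2}{d}\sum(\bE X_t)^2$ and $\tfrac{4|\lambda|k}{d}$ terms in Lemma~\ref{lemma:exponential_moment_special_case}, and from the exponentially unlikely failure of the perfect conditions — accumulate, over the $\binom r3$ relevant triples, to at most an $O_p(D^{-1})$ fraction of the main term $\tfrac{a^3}{p^3\sqrt d}\binom r3$. This is where one uses that $D$ is large compared to $C$ and $a/p$, that $\ell$ is large compared to $D$, and that $\delta=\alpha d^{-1/4}$ carries an extra factor $\ell^{-1/2}$ beyond the $O(D^{-1})$ scale.
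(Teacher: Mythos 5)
Your proposal is correct and follows essentially the same route as the paper. You perform the same coordinate change (Bartlett decomposition), bound each edge probability via log-concavity of $\Phi$, recast the conditional law as independent truncated Gaussians column by column, and apply Lemma~\ref{lemma:exponential_moment_special_case} together with Lemma~\ref{lemma:truncated_expectation} to the quadratic sum $S_m$; the paper phrases the column-by-column peeling as a reverse induction on $s$ (Proposition~\ref{prop:induction_1}) with Claim~\ref{claim:auxiliary_1} doing the log-concavity step and Claim~\ref{claim:auxiliary_2} doing the exponential-moment step, while you unroll that induction into an explicit change of measure and a product over column blocks, but the underlying computation, including the hockey-stick summation $\sum_m\binom{r-m}{2}=\binom{r}{3}$ and the point about the partial-column norm bounds being measurable with respect to earlier columns, is the same.
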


The proof of Proposition~\ref{prop:perfect_probabilities} is the heart of our argument, and it will be presented in the next section. In this section, however, we will show how to get the bounds on $P_{\red, r}, P_{\blue, r}$ from (\ref{eq:perfect_probabilities_red}), (\ref{eq:perfect_probabilities_blue}).

\begin{proof}[Proof of Proposition~\ref{prop:clique_probabilities} based on Proposition~\ref{prop:perfect_probabilities}.]
We will present a bound only for $P_{\red, r}$, since the bound on $P_{\blue, r}$ can be derived in a completely analogous fashion. Recall, we aim to bound $P_{\red, r}$, the probability that $\bx_1, \dots, \bx_r$ form a red clique.

Let us begin the proof by describing a procedure which extracts a perfect sequence $(\by_1, \dots, \by_t)$ from $(\bx_1, \dots, \bx_r)$. To do this, we go through the sequence $(\bx_1, \dots, \bx_r)$ in order. In the $i$-th step, we consider the element $\bx_i$, and assuming that we have defined $\by_1,\dots, \by_j$ so far, we check whether $\|\bx_i\|_2\in (1-\delta, 1+\delta)$ and $\|\pi_{{\rm span}\{\by_1, \dots, \by_j\}} (\bx_i)\|_2\leq \frac{\alpha \sqrt{\ell}}{\sqrt{d}}$. If both inequalities hold, we set $\by_{j+1}=\bx_i$, and otherwise we simply throw $\bx_i$ away.

Let $I$ be the set of indices $i$ for which $\bx_i$ was included in the sequence $(\by_1, \dots, \by_t)$. By definition, $(\by_1, \dots, \by_t)$ is a perfect sequence. Also, observe that $(\bx_1, \dots, \bx_r)$ is perfect if and only if $I=[r]$.

To bound the probability that $(\bx_1, \dots, \bx_r)$ forms a red $\ell$-clique, we use a union bound over all possible sets $I$ produced by the above procedure. If we let $p_I=\Pb\big[\bx_1, \dots, \bx_r$ form a red clique and the above procedure gives the set $I\big]$, then
{\smaller[0.3]
\begin{align}\label{eqn:union_bound_perfect}
P_{\red, r}&=\Pb[\bx_1, \dots, \bx_r \text{ form a red clique}]= \sum_{I\subseteq [r]} p_I.
\end{align}
}
As suggested above, the main term of the above sum comes from $I=[r]$. In this case, we are evaluating the probability that the sequence $\bx_1, \dots, \bx_r$ forms a red clique and is perfect, which equals $P_{\red, r}^*$ by definition. Let us now bound the contribution of terms coming from other sets $I$.

If $i\notin I$, then either $\|\bx_i\|_2\notin (1-\delta, 1+\delta)$ or $\big\|\pi_{{\rm span}\{\bx_1, \dots, \bx_{i-1}\}}\bx_i\big\|_2\geq \|\pi_{{\rm span}\{\by_1, \dots, \by_j\}} (\bx_i)\|_2
\geq \frac{\alpha\sqrt{\ell}}{\sqrt{d}}$, where we used that enlarging the subspace only increases the length of the projection. The first event happens with probability at most $2e^{-\delta^2d/10}\leq e^{-\Omega(\sqrt{d})}\ll (p/10)^{10C\ell}$ (due to Lemma~\ref{lemma:norm_concentration}). The second of these events also happens with probability at most $\exp(-\alpha^2\ell/8)$, due to Lemma~\ref{lemma:norm_projection} and the fact that $\alpha^2\ell/8\geq 100C\log (10/p)\ell/8>C\ell\geq s$. Since $\alpha=10\sqrt{C\log(10/p)}$, we have $\exp(-\alpha^2\ell/8)\leq (p/10)^{10C\ell}$. Finally, note that both of these statements are true even conditionally on $\bx_1, \dots, \bx_{i-1}$. Thus, we can write the following
{\smaller[0.3]
\begin{align*}
    p_I\leq \Pb\big[(\bx_i)_{i\in I}&\text{ forms a red clique and is perfect}\big]\\
    &\cdot \Pb\big[\|\bx_i\|_2\notin(1-\delta, 1+\delta)\text{ or }\big\|\pi_{{\rm span}\{\bx_1, \dots, \bx_{i-1}\}}(\bx_i)\big\|_2\geq \frac{\alpha\sqrt{\ell}}{\sqrt{d}}\text{ for all }i\notin I\big].
\end{align*}
}

The first probability is $P_{\red, |I|}^*$, while the second one is at most $\big(p/10\big)^{10C\ell\cdot (r-|I|)}$, due to the independence of $\bx_1, \dots, \bx_r$. Hence, we have
$p_I\leq P_{\red, |I|}^*\big(p/10\big)^{10 C\ell\cdot (r-|I|)}$. Let $r-|I|=u$.

We now use the bounds on $P_{\red, r-u}^*$ coming from Proposition~\ref{prop:perfect_probabilities}
{\smaller[0.3]
\begin{align*}
  P_{\red, r-u}^*&\leq p^{\binom{r-u}{2}}\exp\bigg(-\big(1+O(D^{-1})\big)\frac{a^3}{p^3\sqrt{d}}\binom{r-u}{3}\bigg)
\end{align*}
}
Observe now that
{\smaller[0.3]
\[\Big(\frac{p}{10}\Big)^{10 C\ell\cdot u}\leq p^{\binom{r}{2}-\binom{r-u}{2}}\cdot 2^{-C\ell u}\exp\bigg(-\big(1+O(D^{-1})\big) \frac{a^3}{p^3 \sqrt{d}}\Big(\binom{r}{3}-\binom{r-u}{3}\Big)\bigg),\]
}
since $\binom{r}{2}-\binom{r-u}{2}\leq ru\leq C\ell u$ and $\frac{a^3}{p^3\sqrt{d}}\Big(\binom{r}{3}-\binom{r-u}{3}\Big)\leq \frac{a^3}{p^3}\frac{ur^2}{\sqrt{d}}\leq \frac{a^3}{p^3}\frac{u(C\ell)^2}{D\ell}\leq C\ell u$ (recall that $D$ is sufficiently large as a function of $p, C$).
So,
{\smaller[0.3]
\[p_I\leq P_{\red, r-u}^*\cdot \Big(\frac{p}{10}\Big)^{10 C\ell\cdot u}\leq 2^{-2C\ell u} p^{\binom{r}{2}}\exp\bigg(-\big(1+O(D^{-1})\big)\frac{a^3}{p^3\sqrt{d}}\binom{r}{3}\bigg).\]
}

Let us now split the sum in (\ref{eqn:union_bound_perfect}) depending on the size of $I$. Recalling that $u=r-|I|$, we have
{\smaller[0.3]
\begin{align*}
    P_{\red, r}\leq p^{\binom{r}{2}}\exp\bigg(-\big(1+O(D^{-1})\big)\frac{a^3}{p^3\sqrt{d}}\binom{r}{3}\bigg)\sum_{u=0}^{r} \binom{r}{u} 2^{-2C\ell u}.
\end{align*}
}
Since $r\leq C\ell$, we easily see that $\sum_{u=0}^{r} \binom{r}{u} 2^{-2C\ell u}= (1+2^{-C\ell})^{r}\leq \exp\big(r/2^{C\ell}\big)\le 2$, and so
{\smaller[0.3]
\begin{align*}
    P_{\red, r}\leq  2p^{\binom{r}{2}}\exp\Big(-\big(1+O(D^{-1})\big)\frac{a^3}{p^3\sqrt{d}}\binom{r}{3}\Big).
\end{align*}
}
This suffices to complete the proof, by absorbing the constant factor $2$ into the $O_p(\frac{r^3}{D\sqrt{d}})$ error term in the exponent.
\end{proof}

\section{Changing the perspective}\label{sec:induction}

Our goal in this section is to bound the probability that $G\sim G(r, d, p)$ is a clique or an independent set and that the Gaussian vectors $\bx_1, \dots, \bx_r$ corresponding to its vertices form a perfect sequence, i.e. to bound $P_{\red, r}^*, P_{\blue, r}^*$. In other words, we will prove Proposition~\ref{prop:perfect_probabilities}. The first step in doing this will be to change coordinates to a more convenient basis, as follows.

For a sequence of $r$ points $\bx_1, \dots, \bx_r\sim \cN(0, \frac{1}{d}I_d)$, let us reveal the subspaces ${\rm span}\{\bx_1\}, {\rm span}\{\bx_1, \bx_2\}$, $\dots, {\rm span}\{\bx_1, \dots, \bx_r\}$. We may rotate the setup so that ${\rm span}\{\bx_1\}$ is aligned with the first basis vector $\be_1$, ${\rm span}\{\bx_1, \bx_2\}$ is the span of first two basis vectors $\be_1, \be_2$, and so on, until ${\rm span}\{\bx_1, \dots, \bx_r\}={\rm span}\{\be_1, \dots, \be_r\}$. After the rotation, the vector $\bx_i$ will have the first $i-1$ coordinates distributed as independent $\cN(0, \frac{1}{d})$ Gaussians, while the last coordinate will follow the distribution $\sqrt{\frac{1}{d}\chi_{d-i+1}^2}$. Recall that the $\chi_k^2$ distribution is defined as the sum of squares of $k$ one-dimensional standard normal random variables. The mentioned rotation can be obtained by applying the Gram-Schmidt process to the sequence $\bx_1, \dots, \bx_r$, and the set of vectors obtained after the rotation is sometimes known as the Bartlett decomposition.

\begin{definition}
Let $r$ be a positive integer with $2\leq r\leq d$. For $1\leq i\leq r$, let us define a random vector $\by_i\in \bR^d$ as follows: the first $i-1$ coordinates of $\by_i$ are Gaussian $\cN(0, \frac{1}{d})$, the $i$-th coordinate follows the distribution $\sqrt{\frac{1}{d}\chi_{d-i+1}^2}$, and all further coordinates are zero. Moreover, all of the above coordinates are sampled independently.
\end{definition}

\begin{lemma}
Let $d\geq r\geq 1$ be positive integers, let $\by_1, \dots, \by_r$ be sampled as above, and let $\bx_1, \dots, \bx_r\sim \cN(0, \frac{1}{d}I_d)$ be independent. Then, the collection of inner products $\big\{\langle \bx_i, \bx_j\rangle\big\}_{1\leq i, j\leq r}$ has the same joint distribution as $\big\{\langle \by_i, \by_j\rangle\big\}_{1\leq i, j\leq r}$.

In particular, if $G'$ is a random graph on the vertex set $[r]$ where the vertices $i,j$ are adjacent if $\langle \by_i, \by_j\rangle\geq -\frac{c_p}{\sqrt{d}}$, then $G'$ follows the same distribution as the random graph $G(r, d, p)$.
\end{lemma}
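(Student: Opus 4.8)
The plan is to produce an orthogonal change of coordinates --- depending measurably on $\bx_1,\dots,\bx_r$ --- that transports $(\bx_1,\dots,\bx_r)$ to a sequence with the same law as $(\by_1,\dots,\by_r)$; since inner products are invariant under orthogonal maps, this gives the claimed equality of joint distributions for free. Concretely, I work on the full-measure event that $\bx_1,\dots,\bx_r$ are linearly independent, let $q_1,\dots,q_r$ be their Gram--Schmidt orthonormalisation, and define $\hat\bx_i\in\bR^d$ by $(\hat\bx_i)_k=\langle q_k,\bx_i\rangle$ for $1\le k\le r$ and $(\hat\bx_i)_k=0$ for $k>r$. Since $q_1,\dots,q_r$ is an orthonormal basis of a subspace containing every $\bx_i$, we get $\langle\hat\bx_i,\hat\bx_j\rangle=\sum_{k=1}^r\langle q_k,\bx_i\rangle\langle q_k,\bx_j\rangle=\langle\bx_i,\bx_j\rangle$ for all $i,j\le r$. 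So it suffices to prove $(\hat\bx_1,\dots,\hat\bx_r)\overset{d}{=}(\by_1,\dots,\by_r)$, which is just the classical Bartlett decomposition of a Gaussian matrix (and could alternatively be cited directly).

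To identify the law of $(\hat\bx_i)$ I would reveal $\bx_1,\dots,\bx_r$ one at a time and compute the conditional law of $\hat\bx_i$ given $\bx_1,\dots,\bx_{i-1}$. Write $V_{i-1}=\spn\{\bx_1,\dots,\bx_{i-1}\}=\spn\{q_1,\dots,q_{i-1}\}$, which is $(i-1)$-dimensional almost surely. The first $i-1$ entries of $\hat\bx_i$ are $\langle q_k,\bx_i\rangle$ for $k<i$; as $q_1,\dots,q_{i-1}$ is an orthonormal system measurable with respect to the past and $\bx_i\sim\cN(0,\tfrac1d I_d)$ is independent of it, rotational invariance of the Gaussian makes these entries i.i.d. $\cN(0,\tfrac1d)$ and independent of $\bx_1,\dots,\bx_{i-1}$. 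The $i$-th entry is $\langle q_i,\bx_i\rangle=\|\bx_i-\pi_{V_{i-1}}(\bx_i)\|_2=\|\pi_{V_{i-1}^\perp}(\bx_i)\|_2$; conditionally on $V_{i-1}$ the vector $\pi_{V_{i-1}^\perp}(\bx_i)$ is a $\cN(0,\tfrac1d I)$ Gaussian on the $(d-i+1)$-dimensional space $V_{i-1}^\perp$, so its squared length is $\tfrac1d\chi^2_{d-i+1}$, and it is independent of $\pi_{V_{i-1}}(\bx_i)$ and hence of the first $i-1$ entries. Finally, entries with index $k>i$ vanish because $\bx_i\in\spn\{q_1,\dots,q_i\}\perp q_k$. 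Thus, conditionally on $\hat\bx_1,\dots,\hat\bx_{i-1}$, the vector $\hat\bx_i$ has exactly the law defining $\by_i$, and this conditional law does not depend on the past; an induction on $i$ then shows $\hat\bx_1,\dots,\hat\bx_r$ are independent with $\hat\bx_i\overset{d}{=}\by_i$, i.e. $(\hat\bx_1,\dots,\hat\bx_r)\overset{d}{=}(\by_1,\dots,\by_r)$.

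Combining the two facts yields $\{\langle\bx_i,\bx_j\rangle\}_{i,j\le r}=\{\langle\hat\bx_i,\hat\bx_j\rangle\}_{i,j\le r}\overset{d}{=}\{\langle\by_i,\by_j\rangle\}_{i,j\le r}$, which is the first assertion. The ``in particular'' statement is then immediate, since the adjacency rule $\langle\cdot,\cdot\rangle\ge-c_p/\sqrt d$ defining $G(r,d,p)$ and $G'$ is one and the same measurable function of the respective inner-product arrays. The only genuinely delicate point is the measure-theoretic bookkeeping --- that Gram--Schmidt is a measurable map on the full-measure set of linearly independent tuples, and that one may condition on the data-dependent subspace $V_{i-1}$ and still invoke rotational invariance of the Gaussian together with independence of its projections onto $V_{i-1}$ and $V_{i-1}^\perp$ --- but all of this is standard.
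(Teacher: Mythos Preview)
Your proposal is correct and follows essentially the same approach as the paper: both apply Gram--Schmidt to $\bx_1,\dots,\bx_r$ to produce a rotated sequence (the paper calls it $\bz_i=O\bx_i$, you call it $\hat\bx_i$), observe that inner products are preserved under this orthogonal change of coordinates, and then identify the law of the rotated sequence with that of $(\by_1,\dots,\by_r)$ via the Bartlett decomposition. Your write-up is, if anything, slightly more careful than the paper's about the conditional-independence bookkeeping across $i$ and the measure-theoretic caveats.
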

\begin{proof}
Let us denote the standard basis of $\bR^d$ by $\be_1, \dots, \be_d$. Let us sample $\bx_1, \dots, \bx_r\sim \cN(0, \frac{1}{d}I_d)$ independently and consider the following rotation. Let $O$ be a $d\times d$ orthogonal matrix such that ${\rm span}\{O\bx_1\}={\rm span}\{\be_1\}$, ${\rm span}\{O\bx_1, O\bx_2\}={\rm span}\{\be_1, \be_2\}$, $\cdots$, ${\rm span}\{O\bx_1, \dots, O\bx_r\}={\rm span}\{\be_1, \dots, \be_r\}$. Note that such a matrix exists whenever $\bx_1, \dots, \bx_r$ are linearly independent (which happens with probability $1$), and can be explicitly constructed using the Gram-Schmidt algorithm.

Let $\bz_i=O\bx_i$, and we claim that $(\bz_1, \dots, \bz_r)$ follows the same distribution as $(\by_1, \dots, \by_r)$, which is sufficient to conclude the proof (since $O$ does not change the inner products).

Observe first that $\bz_i(j)=0$ when $j>i$, since $\bz_i$ lies in the span of $\be_1, \dots, \be_i$. Further, note that $\pi_{{\rm span}\{\be_1, \dots, \be_{i-1}\}}(\bz_i)$ is the same as the projection of $\bx_i$ to ${\rm span}\{\bx_1, \dots, \bx_{i-1}\}$, which is simply a $(i-1)$-dimensional Gaussian distribution with covariance matrix $\frac{1}{d}I_{i-1}$. Thus, every coordinate of $\pi_{{\rm span}\{\be_1, \dots, \be_{i-1}\}}(\bz_i)$ has a Gaussian distribution with variance $\frac{1}{d}$, as needed. Finally, $\bz_i(i)$ is the residual length of $\bx_i$ which does not lie in the subspace ${\rm span} \{\bx_1, \dots, \bx_{i-1}\}$. However, since the Gaussian distribution is rotationally invariant, this is simply the length of a $(d-i+1)$-dimensional Gaussian, which is distributed as $\sqrt{\frac{1}{d}\chi_{d-i+1}^2}$. This completes the proof.
\end{proof}

Although we have initially defined vectors $\by_1, \dots, \by_r$ as elements of $\bR^d$, they are always zero on coordinates $r+1, \dots, d$, and so we might as well write $\by_1, \dots, \by_r\in \bR^r$. The main advantage of working in this new basis is that it simplifies the calculation of the inner products. More precisely, for $i<j$, we have
{\smaller[0.3]
\[\langle \by_i, \by_j\rangle=\by_j(i)\by_i(i) + \sum_{k=1}^{i-1}\by_i(k)\by_j(k).\]
}
Recall that $\by_i(k)$ denotes the $k$-th coordinate of $\by_i$. Since each coordinate $\by_i(k), \by_j(k)$ is expected to have size roughly $1/\sqrt{d}$ and $\by_i(i)\approx 1$, we see that the leading term of the above expression is $\by_j(i)$. In other words, whether the edge $ij$ is present or not is essentially determined by $\by_j(i)$, with small corrections coming from the other coordinates $\by_i(k), \by_j(k)$. Throughout the proof, we find it useful to arrange the vectors $\by_1, \dots, \by_r$ into a lower-triangular matrix $M$ whose rows are $\by_1, \dots, \by_r$.
{\smaller[0.3]
\[
\begin{array}{|c|c|c|c|}
\cline{1-1}
\by_1(1)   \\ \cline{1-2}
\by_2(1) & \by_2(2)   \\ \cline{1-3}
\by_3(1) & \by_3(2) & \by_3(3) \\ \cline{1-4}
\by_4(1) & \by_4(2) & \by_4(3) & \by_4(4) \\ \cline{1-4}
\end{array}
\]
}

Since $\by_1, \dots, \by_r$ is obtained from $\bx_1, \dots, \bx_r$ by a rotation, the notion of a perfect sequence remains unchanged. In particular, this means that each vector $\by_i$ has length approximately $1$, and that the most of this length comes from the diagonal entry $\by_i(i)$. More precisely, we have the following deterministic statement.

\begin{claim}\label{claim:diagonal_bound}
If $\by_1, \dots, \by_r\in \bR^r$ is a perfect sequence, then $M_{i, i}=\by_i(i)\in (1-2\delta, 1+\delta)$ for all $1\leq i\leq r$.
\end{claim}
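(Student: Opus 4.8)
The plan is to read off the bound on $\by_i(i)$ directly from the two conditions in the definition of a perfect sequence. Recall that $\bx_1,\dots,\bx_r$ (equivalently $\by_1,\dots,\by_r$, since they differ by a rotation) satisfy $\|\by_i\|_2\in(1-\delta,1+\delta)$ and $\|\pi_{\spn\{\by_1,\dots,\by_{i-1}\}}(\by_i)\|_2\leq \frac{\alpha\sqrt{\ell}}{\sqrt{d}}$ for every $i$. The key observation is that, in the Bartlett coordinates, $\spn\{\by_1,\dots,\by_{i-1}\}=\spn\{\be_1,\dots,\be_{i-1}\}$, so the projection onto that subspace is exactly the vector of the first $i-1$ coordinates of $\by_i$, and hence
\[
\|\by_i\|_2^2 = \underbrace{\sum_{k=1}^{i-1}\by_i(k)^2}_{=\,\|\pi_{\spn\{\be_1,\dots,\be_{i-1}\}}(\by_i)\|_2^2} + \by_i(i)^2,
\]
since $\by_i$ vanishes on coordinates $i+1,\dots,r$.

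From here I would simply solve for $\by_i(i)^2$. The upper bound is immediate: $\by_i(i)^2\leq \|\by_i\|_2^2\leq (1+\delta)^2$, so $\by_i(i)\leq 1+\delta$ (the diagonal entries are nonnegative by construction, being lengths of residual Gaussian components). For the lower bound, I would write $\by_i(i)^2=\|\by_i\|_2^2-\|\pi_{\spn\{\be_1,\dots,\be_{i-1}\}}(\by_i)\|_2^2\geq (1-\delta)^2-\frac{\alpha^2\ell}{d}$, and then argue that $\frac{\alpha^2\ell}{d}$ is tiny compared to $\delta$: indeed $d=D^2\ell^2$ and $\delta=\alpha d^{-1/4}$, so $\frac{\alpha^2\ell}{d}=\frac{\alpha^2}{D^2\ell}$, which is much smaller than $\delta^2=\alpha^2 d^{-1/2}=\frac{\alpha^2}{D\ell}$ and, more to the point, much smaller than $\delta$ itself for large $\ell$. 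Thus $\by_i(i)^2\geq (1-\delta)^2 - \delta\geq 1-3\delta+\delta^2\geq (1-2\delta)^2$ for small $\delta$, giving $\by_i(i)\geq 1-2\delta$.

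There is essentially no obstacle here; this is a bookkeeping step, and the only thing to be mildly careful about is the asymmetry in the claimed interval $(1-2\delta,1+\delta)$, which comes precisely from the fact that subtracting the (small but nonzero) projection term $\frac{\alpha^2\ell}{d}$ costs a little more room on the lower side after taking square roots, while on the upper side we lose nothing. One should also note for cleanliness that the diagonal entries $\by_i(i)$ are nonnegative — they are the lengths of the residual components of $\bx_i$ orthogonal to $\spn\{\bx_1,\dots,\bx_{i-1}\}$ — so taking square roots is unproblematic and the bounds stated are genuinely two-sided bounds on $\by_i(i)$ rather than on $|\by_i(i)|$. This completes the proof of the claim.
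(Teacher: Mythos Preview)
Your proof is correct and follows essentially the same approach as the paper: both decompose $\|\by_i\|_2^2$ into the projection onto the first $i-1$ coordinates plus $\by_i(i)^2$, use the two perfect-sequence conditions to bound these, and take square roots. The only cosmetic difference is that the paper uses the sharper estimate $\frac{\alpha^2\ell}{d}\leq \delta^2$ (which you also observe) to get $(1-\delta)^2-\delta^2=1-2\delta$ directly, rather than passing through $(1-\delta)^2-\delta\geq(1-2\delta)^2$ as you do.
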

\begin{proof}
We have $M_{i, i}^2=\by_i(i)^2=\|\by_i\|_2^2-\|\pi_{i-1}(\by_i)\|_2^2\geq (1-\delta)^2-\frac{\alpha^2\ell}{d}$, where we have used that $\by_1, \dots, \by_r$ is perfect in the last inequality. Since $\delta=\alpha d^{-1/4}$, we have $\alpha^2\ell/d\leq \alpha^2/\sqrt{d}=\delta^2$, and so $M_{i, i}^2\geq (1-\delta)^2-\delta^2=1-2\delta$, implying that $M_{i, i}\geq 1-2\delta$. On the other hand, we have $M_{i, i}\leq \|\by_i\|\leq 1+\delta$.
\end{proof}

The entries on the diagonal will not play an important role in our proof of Proposition~\ref{prop:perfect_probabilities}. Thus, we will condition on the specific outcome of $M_{1, 1}, \dots, M_{r, r}$ and assume each diagonal entry is of size $1\pm 2\delta$. Furthermore, for the sake of shortening the formulas, we will not specify the conditioning on the diagonal entries explicitly.

The main idea in the proof of Proposition~\ref{prop:perfect_probabilities} is to bound the probability that $\by_{s+1}, \dots, \by_r$ is a clique or an independent set, for every $0\leq s\leq r-1$, conditionally on the first $s$ columns of $M$ (note that if we condition on these columns, the connections between vectors $\by_1, \dots, \by_s$ and all other vectors are already determined). The bound will then be proved by induction on $s$, starting from $s=r-1$ and going all the way down to $s=0$. However, before we do that, let us introduce some terminology which will be used in the proof.

For $1\leq i<j\leq r$, let $E_{ij}$ be the event that $\langle \by_i, \by_j\rangle\geq -c_p/\sqrt{d}$, and let $\overline{E}_{ij}$ be its complement, i.e. the event that $\langle \by_i, \by_j\rangle< -c_p/\sqrt{d}$. Let $C_s=\bigwedge_{s< i<j\leq r} E_{ij}$ be the event that the vertices $s+1, \dots, r$ induce a clique in $G(r, d, p)$ and let $I_s=\bigwedge_{s< i<j\leq r} \overline{E}_{ij}$ be the event they induce an independent set. Finally, let $B_r$ denote the event that $\by_1, \dots, \by_r$ is a perfect sequence. This event implies that $\|\pi_i(\by_{i+1})\|_2\leq \frac{\alpha\sqrt{\ell}}{\sqrt{d}}$ for all $1\leq i\leq r-1$.

Also, for $1\leq s\leq i\leq r$, we write $\pi_s(\by_i)$ for the projection of $\by_i$ to the first $s$ coordinates (which is a shorthand for $\pi_{{\rm span}\{\by_1, \dots, \by_s\}}(\by_i)$). Further, let $M_i$ denote the set of entries in the $i$-th column of the matrix $M$ strictly below the diagonal.

\begin{proposition}\label{prop:induction_1}
Let $C>1, p\in (0, 1)$ be real numbers and $\ell, d=D^2\ell^2$ be large positive integers, where $D$ is sufficiently larger than $C$. Also, let $s, r$ be nonnegative integers such that $1\leq s+1\leq r\leq C\ell$. Suppose that the first $s$ columns of $M$ are fixed, and denote them by $M[s]=(M_1, \dots, M_{s})$. Then, the probabilities over the random choice of the remaining columns $M_{s+1}, \dots, M_r$ of the events $C_{s}\wedge B_r, I_{s}\wedge B_r$ are at most
{\smaller[0.3]
\begin{align}
    &\Pb\Big[I_s\wedge B_r\Big| M[s]\Big]\leq p^{\binom{r-s}{2}}\exp\bigg(\!-\!\frac{a\sqrt{d}}{p}\!\!\!\!\sum_{s<i< j\leq r}\!\!\!\!\langle \pi_s(\by_i), \pi_s(\by_j)\rangle \!-\!\frac{a^3}{p^3\sqrt{d}}\binom{r\!-\!s}{3}\!+\!O\Big(\frac{(r\!-\!s)^4}{d}\!+\!(r\!-\!s)\Big)\bigg)\label{eq:induction_bound_red}\\
    &\Pb\Big[C_s\wedge B_r\Big| M[s]\Big]\leq (1\!-\!p)^{\binom{r-s}{2}}\exp\bigg(\frac{a\sqrt{d}}{1\!-\!p}\!\!\sum_{s<i< j\leq r}\!\!\langle \pi_s(\by_i), \pi_s(\by_j)\rangle\!+\!\frac{a^3}{(1\!-\!p)^3\sqrt{d}}\binom{r\!-\!s}{3}  \label{eq:induction_bound_blue} \\
    &\hspace{11.0cm}\!+\!O\Big(\frac{(r\!-\!s)^4}{d}\!+\!(r\!-\!s)\Big)\bigg) \notag.
\end{align}
}
\end{proposition}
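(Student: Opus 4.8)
The plan is to prove both bounds simultaneously by downward induction on $s$, from $s=r-1$ down to $s=0$, and the two cases (red/independent set and blue/clique) are entirely parallel, so I will describe the red case. The base case $s=r-1$ is essentially trivial: conditioning on the first $r-1$ columns, the event $I_{r-1}$ is empty (there are no pairs $s<i<j\le r$), so the binomial term is $p^0=1$, the sum over inner products is empty, and the error term $O(r-s)$ absorbs whatever slack arises from $B_r$; one just needs that $\Pb[B_r\mid M[r-1]]\le 1$. For the inductive step, assume the bound holds for $s$ and prove it for $s-1$. The key is to reveal the $s$-th column $M_s=(\by_i(s))_{s<i\le r}$ last: conditioning on $M[s-1]$, we first expose the diagonal entry $\by_s(s)$ (which by Claim~\ref{claim:diagonal_bound} is $1\pm 2\delta$ and which we have agreed to condition on), and then the off-diagonal entries $\by_i(s)$ for $i>s$, which are i.i.d.\ $\cN(0,1/d)$ and \emph{independent} of everything revealed so far — this independence is the whole point of working in the Bartlett/Gram--Schmidt basis.

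The heart of the step is the following manipulation. Writing the inner product as $\langle \by_i,\by_j\rangle = \langle \pi_{s-1}(\by_i),\pi_{s-1}(\by_j)\rangle + \by_i(s)\by_j(s) + \by_i(s')\cdots$, the event $\overline{E}_{ij}$ for $s<i<j\le r$ says $\by_j(s)\by_j(j) + (\text{stuff not involving column } s) < -c_p/\sqrt d$; since $\by_j(j)\approx 1$, this is essentially a one-sided condition on $\by_j(s)$. More precisely, for each $j$ with $s<j\le r$, the conjunction $\bigwedge_{s<i<j}\overline{E}_{ij}$ together with $B_r$ forces $\by_j(s)$ to lie below a threshold $-c_p/(\sqrt d\,\by_j(j)) - (\text{small correction})$; by the Mills-ratio/log-concavity bounds \eqref{eqn:mills_ratio}, \eqref{eqn:log_concavity} the probability of this for a single $\cN(0,1/d)$ variable is $p\cdot\exp(O(\cdot))$, and conditioned on it, $\by_j(s)$ becomes a truncated Gaussian with mean $-\,a\sqrt d^{-1}/p + O(\cdot)$ by Lemma~\ref{lemma:truncated_expectation}. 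Multiplying over $j=s+1,\dots,r$ gives the factor $p^{\,r-s}$ (one new factor of $p$ for each pair $\{s,j\}$, matching $\binom{r-s+1}{2}-\binom{r-s}{2}=r-s$), and what remains is to bound the conditional probability of $I_s\wedge B_r$ given $M[s]$, i.e.\ to apply the inductive hypothesis — but now the inner-product sum $\sum_{s<i<j\le r}\langle\pi_s(\by_i),\pi_s(\by_j)\rangle$ in the inductive bound must be re-expressed in terms of $\pi_{s-1}$. Since $\pi_s(\by_i) = \pi_{s-1}(\by_i) + \by_i(s)\be_s$, we get $\langle\pi_s(\by_i),\pi_s(\by_j)\rangle = \langle\pi_{s-1}(\by_i),\pi_{s-1}(\by_j)\rangle + \by_i(s)\by_j(s)$, so the new cross-term $\sum_{s<i<j\le r}\by_i(s)\by_j(s) = S$ is exactly a quadratic sum of the column-$s$ variables of the type handled by Lemma~\ref{lemma:exponential_moment_special_case}.

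So the structure of the step is: (1) factor the conditional probability as an integral over the column-$s$ variables of $p^{r-s}\exp(\text{linear threshold corrections})$ times the inductive-hypothesis bound evaluated with $\pi_s$; (2) use the identity above to replace the $\pi_s$-sum by the $\pi_{s-1}$-sum plus $-\frac{a\sqrt d}{p}S$, where the coefficient $-a\sqrt d/p$ is precisely the prefactor appearing in \eqref{eq:induction_bound_red}; (3) now take the expectation over the (truncated) column-$s$ variables $\by_i(s)$ of $\exp(-\frac{a\sqrt d}{p}S)$, applying Lemma~\ref{lemma:exponential_moment_special_case} with $\lambda = -a\sqrt d/p$ and $k=r-s$ (the hypothesis $d\ge 4|\lambda|k$ reads $\sqrt d\gtrsim (r-s)\lesssim C\ell$, which holds since $d=D^2\ell^2$ with $D\gg C$), remembering to use that truncated Gaussians have variance proxy $\le 1/d$ by \cite{BMA24}; (4) bookkeep: the term $\lambda\bE[S]$ contributes $-\frac{a\sqrt d}{p}\cdot\binom{r-s}{2}\cdot(\text{truncated mean})^2$-type expressions that, combined with the change $\binom{r-s}{3}\to\binom{r-s+1}{3}$ needed to advance the induction, produce the $-\frac{a^3}{p^3\sqrt d}\binom{r-s+1}{3}$ term up to the allowed errors (this uses the arithmetic identity $\sum_{s<j\le r}\binom{j-s-1}{?}$... more simply $\binom{r-s+1}{3}-\binom{r-s}{3}=\binom{r-s}{2}$, matching one truncated-mean-squared per pair $\{s,j\}$ with $j$ ranging); the term $\frac{\lambda^2 k^2}{d}\sum(\bE X_j)^2$ is $O((r-s)^3/d)\le O((r-s)^4/d)$ since each truncated mean is $O(1/\sqrt d)$ — wait, each $(\bE X_j)^2 = O(1/d)$ so this is $\frac{(r-s)^2}{d}\cdot(r-s)\cdot O(1/d)=O((r-s)^3/d^2)$, comfortably absorbed; and the $\frac{4|\lambda|k}{d}$ term is $O((r-s)/\sqrt d)=O(r-s)$... in fact $O((r-s)/(D\ell))$, well within the stated $O(r-s)$ slack; finally the corrections coming from the thresholds $-c_p/(\sqrt d\,\by_j(j))$ versus $-c_p$ (a relative error of size $2\delta=O(d^{-1/4})$) and from the ``other coordinates'' in $\langle\by_i,\by_j\rangle$ being part of the condition contribute $O((r-s)^4/d)$-type terms via \eqref{eqn:log_concavity} and the perturbed versions of Lemma~\ref{lemma:truncated_expectation}.

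The main obstacle, and where the argument must be executed with care, is step~(3)--(4): one is taking an expectation over variables that are \emph{simultaneously} truncated (by the events $\overline E_{ij}$ and by $B_r$) and appearing in the quadratic form $S$, so one must be sure that the truncation events and the quadratic exponential can be disentangled — the clean way is to bound $\exp(-\frac{a\sqrt d}{p}S)$ on the truncation region by first replacing the conditional law by the unconditional $\cN(0,1/d)^{\otimes(r-s)}$ at the cost of the factor $p^{r-s}$ already extracted, using that the $\by_i(s)$ are independent of $M[s-1]$, and only \emph{then} invoking the subgaussian moment bound (which holds for the truncated law with proxy $\le 1/d$). A secondary delicate point is that $B_r$ couples columns together (the perfect-sequence condition $\|\pi_s(\by_{s+1})\|\le\alpha\sqrt\ell/\sqrt d$ involves column $s$), but since $B_r$ only ever \emph{restricts} the event, dropping it can only increase the probability except where we need its upper bounds on $\|\pi_s(\by_i)\|$ to control the ``other coordinates'' correction — so one keeps exactly the inequalities from $B_r$ that are needed for the $O(\cdot)$ error terms and discards the rest. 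Threading these two needles while keeping all error terms within $O((r-s)^4/d + (r-s))$ is the crux of the proof.
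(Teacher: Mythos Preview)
Your plan is correct and is essentially the paper's proof: downward induction on $s$, reveal column $M_s$, use log-concavity of $\Phi$ to extract the factor $p^{r-s}$ (resp.\ $(1-p)^{r-s}$) together with the linear ``threshold correction'' $\sum_{j>s}\langle\pi_{s-1}(\by_s),\pi_{s-1}(\by_j)\rangle$, then apply Lemma~\ref{lemma:exponential_moment_special_case} to the truncated column variables to handle $\exp(\lambda S)$ with $S=\sum_{s<i<j}\by_i(s)\by_j(s)$, and close the induction via $\binom{r-s+1}{3}-\binom{r-s}{3}=\binom{r-s}{2}$. One notational slip to fix: the events that constrain $\by_j(s)$ are $\overline{E}_{s,j}$ (not $\overline{E}_{ij}$ with $s<i<j$), and the relevant inner product is $\langle\by_s,\by_j\rangle=\by_s(s)\by_j(s)+\langle\pi_{s-1}(\by_s),\pi_{s-1}(\by_j)\rangle$, not ``$\by_j(s)\by_j(j)+\cdots$''.
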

\begin{proof}
We argue by reverse induction on $s$. Note that for $s=r-1$, the statement is vacuous, since there the whole matrix is revealed and the product on the right hand side of Equation (\ref{eq:induction_bound_red},\ref{eq:induction_bound_blue}) is empty. Thus, we may use this as a basis of the induction. Let us first focus on bounding the probability of $C_s\wedge B_r$.

Suppose now that the statement holds for $s$ and let us prove it for $s-1$, where only the columns $M[s-1]=(M_1, \dots, M_{s-1})$ are fixed. We will reduce the inductive step to two auxiliary claims, which we will then prove separately. To do this, we sample the column $M_s$ by setting all of its entries below the diagonal to be an independent Gaussian $\cN(0, \frac{1}{d})$. This is sufficient to determine whether or not the vertex $s$ is connected to the vertices $s+1, \dots, r$. Let us begin by estimating this probability in our first auxiliary claim.

\begin{claim}\label{claim:auxiliary_1}
Given $M_1, \dots, M_{s-1}$, the probability (over the random choice of $M_s$) that $s$ is connected to all of $s+1, \dots, r$ is at most
{\smaller[0.3]
\begin{equation}\label{eq:auxiliary_1_blue}
    \Pb\bigg[\bigwedge_{s<i\leq r}E_{s, i}\bigg|M[s\!-\!1]\bigg]\leq (1\!-\!p)^{r-s}\exp\bigg(\frac{a\sqrt{d}}{1\!-\!p}\sum_{i=s+1}^r \langle \pi_{s-1}(\by_i), \pi_{s-1}(\by_s)\rangle\!+\!O\big((r\!-\!s)\delta\big)\bigg).
\end{equation}
}
Also, the probability that $s$ is not connected to any of $s+1, \dots, r$ is at most
{\smaller[0.3]
\begin{equation}\label{eq:auxiliary_1_red}
    \Pb\bigg[\bigwedge_{s<i\leq r}\overline{E}_{s, i}\bigg|M[s\!-\!1]\bigg]\leq
    p^{r-s}\exp\bigg(\!-\!\frac{a\sqrt{d}}{p}\sum_{i=s+1}^r \langle \pi_{s-1}(\by_i), \pi_{s-1}(\by_s)\rangle\!+\!O\big((r\!-\!s)\delta\big)\bigg).
\end{equation}
}
\end{claim}

Although the proof of Claim~\ref{claim:auxiliary_1} is quite simple, we postpone it for later. For each outcome of $M_s$, the induction hypothesis provides a bound on the probability of $C_s\wedge B_r$ over the random choice of $M_{s+1}, \dots, M_r$. Thus, in order to perform an induction step, we will integrate this bound over all outcomes of $M_s$ satisfying the event $C_s\wedge B_r$. At this point, it is important to observe that $C_{s-1}=C_{s}\wedge \bigwedge_{s<i\leq r}E_{si}$, i.e. that vertices $s, \dots, r$ form a clique precisely when vertices $s+1, \dots, r$ form a clique and $s$ is connected to all vertices among $s+1, \dots, r$. Hence, by the law of total probability, we can write
{\smaller[0.3]
\begin{align}\label{eq:total_probability_blue}
\Pb\Big[C_{s\!-\!1}\wedge B_r&\Big| M[s\!-\!1]\Big] =\bE_{M_s}\bigg[\Pb\Big[C_{s}\wedge B_r\Big| M_{s}\Big]\bigg| \bigwedge_{s<i\leq r}E_{s, i}, M[s\!-\!1]\bigg]\Pb\bigg[\bigwedge_{s<i\leq r}E_{s, i}\bigg|M[s\!-\!1]\bigg].
\end{align}
}

Since we have already estimated the latter term, we focus on the first one. We use the induction hypothesis to upper bound $\Pb\Big[C_{s}\wedge B_r\Big| M_{s}\Big]$ as follows
{\smaller[0.3]
\begin{equation}\label{eq:induction_hypothesis_blue}
    \Pb\Big[C_{s}\wedge B_r\Big| M_{s}\Big]\leq (1\!-\!p)^{\binom{r-s}{2}}\exp\bigg(\frac{a\sqrt{d}}{1\!-\!p}\!\!\sum_{s<i< j\leq r}\!\!\!\!\langle \pi_s(\by_i), \pi_s(\by_j)\rangle + \frac{a^3}{(1\!-\!p)^3\sqrt{d}}\binom{r\!-\!s}{3}\!+\!O\Big(\frac{(r\!-\!s)^4}{d}\Big)\bigg).
\end{equation}
}

Note that most terms on the right-hand side are independent of the entries of $M_s$. In fact, the only terms which depend on it are $\exp\Big(\frac{a\sqrt{d}}{1-p}\sum_{s<i<j\leq r} \by_i(s)\by_j(s)\Big)$. Thus, we focus on bounding their expectation.

\begin{claim}\label{claim:auxiliary_2}
If the entries of $M_s$ are sampled as Gaussians $\cN(0, \frac{1}{d})$, then we have
{\smaller[0.3]
\begin{align}
    \bE_{M_s}\bigg[\exp\bigg(\frac{a\sqrt{d}}{1\!-\!p}\!\!\sum_{s<i<j\leq r}\!\! \by_i(s)\by_j(s)\bigg)\bigg| \bigwedge_{s<i\leq r}\!\! E_{s, i}, M[s\!-\!1]\bigg]\leq  \exp\bigg(\frac{a^3}{(1\!-\!p)^3\sqrt{d}}\binom{r\!-\!s}{2}\!+\!O\Big(\frac{(r\!-\!s)^3}{d}\!+\!\frac{r-s}{\sqrt{d}}\Big)\bigg)\label{eq:auxiliary_2_blue}    \\
    \bE_{M_s}\bigg[\exp\bigg(\!-\!\frac{a\sqrt{d}}{p}\!\!\sum_{s<i<j\leq r}\!\! \by_i(s)\by_j(s)\bigg)\bigg| \bigwedge_{s<i\leq r}\!\! \overline{E}_{s, i}, M[s\!-\!1]\bigg]\leq \exp\bigg(\!-\!\frac{a^3}{p^3\sqrt{d}}\binom{r\!-\!s}{2}\!+\!O\Big(\frac{(r\!-\!s)^3}{d}\!+\!\frac{r-s}{\sqrt{d}}\Big)\bigg)\label{eq:auxiliary_2_red}
\end{align}
}
\end{claim}

To complete the induction step using the two auxiliary claims, we perform some simple but tedious bookkeeping. By plugging in the bound (\ref{eq:auxiliary_2_blue}) into (\ref{eq:induction_hypothesis_blue}), we obtain
{\smaller[0.3]
\begin{align*}
&\bE_{M_s}\Big[\Pb\big[C_{s}\wedge B_r\big| M_{s}\big]\Big| \bigwedge_{s<i\leq r}E_{s, i}, M[s\!-\!1]\Big]\leq \\
&\leq (1\!-\!p)^{\binom{r-s}{2}}\exp\bigg(\frac{a\sqrt{d}}{1\!-\!p}\!\!\sum_{s<i< j\leq r}\!\!\!\!\langle \pi_{s-1}(\by_i), \pi_{s-1}(\by_j)\rangle\!+\!\frac{a^3}{(1\!-\!p)^3\sqrt{d}}\binom{r\!-\!s}{3}\!+\!O\Big(\frac{(r\!-\!s)^4}{d}\!+\!\frac{(r\!-\!s)^2}{\sqrt{d}}\Big)\bigg) \\
&\hspace{5cm}\cdot \exp\bigg(\frac{a^3}{(1\!-\!p)^3\sqrt{d}}\binom{r\!-\!s}{2}\!+\!O\Big(\frac{ (r\!-\!s)^3}{d}\!+\!\frac{r-s}{\sqrt{d}}\Big)\bigg)\\
&\leq (1\!-\!p)^{\binom{r-s}{2}}\exp\bigg(\frac{a\sqrt{d}}{1\!-\!p}\!\!\sum_{s<i< j\leq r}\!\!\!\!\langle \pi_{s-1}(\by_i), \pi_{s-1}(\by_j)\rangle\!+\!\frac{a^3}{(1\!-\!p)^3\sqrt{d}}\binom{r\!-\!s\!+\!1}{3}\!+\!O\Big(\frac{(r\!-\!s\!+\!1)^4}{d}\!+\!\frac{(r\!-\!s\!+\!1)^2}{\sqrt{d}}\Big)\bigg).
\end{align*}
}

By plugging into the equation (\ref{eq:total_probability_blue}) the bound on $\Pb\Big[\bigwedge_{s<i\leq r}E_{s, i}\Big|M[s-1]\Big]$ from (\ref{eq:auxiliary_1_blue}) and the bound of (\ref{eq:induction_hypothesis_blue}), we obtain
{\smaller[0.3]
\begin{align*}
\Pb\Big[C_{s\!-\!1}\wedge B_r\Big| M[s\!-\!1]\Big]\leq    &(1\!-\!p)^{\binom{r-s}{2}}\exp\bigg(\frac{a\sqrt{d}}{1\!-\!p}\!\!\sum_{s<i< j\leq r}\!\!\!\!\langle \pi_{s-1}(\by_i), \pi_{s-1}(\by_j)\rangle\!+\!\frac{a^3}{(1\!-\!p)^3\sqrt{d}}\binom{r\!-\!s\!+\!1}{3}\\
&\hspace{7cm}\!+\!O\Big(\frac{(r\!-\!s\!+\!1)^4}{d}+\frac{(r\!-\!s\!+\!1)^2}{\sqrt{d}}\Big)\bigg)\\
&\hspace{3cm}\cdot(1\!-\!p)^{r-s}\exp\bigg(\frac{a\sqrt{d}}{1\!-\!p}\sum_{i=s+1}^r \langle \pi_{s-1}(\by_i), \pi_{s-1}(\by_s)\rangle\!+\!O\big((r\!-\!s)\delta\big)\bigg)\\
&\hspace{-4cm}\leq (1\!-\!p)^{\binom{r-s+1}{2}}\exp\bigg(\frac{a\sqrt{d}}{1\!-\!p}\!\!\sum_{s\leq i< j\leq r}\!\!\!\!\langle \pi_{s-1}(\by_i), \pi_{s-1}(\by_j)\rangle\!+\!\frac{a^3}{(1\!-\!p)^3\sqrt{d}}\binom{r\!-\!s\!+\!1}{3}\!+\!O\Big(\frac{(r\!-\!s\!+\!1)^4}{d}+\frac{(r\!-\!s\!+\!1)^2}{\sqrt{d}}\Big)\bigg).
\end{align*}
}
A quick inspection shows that this is precisely the bound we need on $\Pb\big[C_{s-1}\wedge B_r\big|M[s-1]\big]$, and thus our induction step is complete. Let us conclude by noting that nothing in this bookkeeping relied on the fact that we are estimating the probability of $C_s\wedge B_r$. In fact, an analogous proof would apply unchanged to estimate the probability of $I_s\wedge B_r$, and therefore we choose not to spell it out explicitly. Let us now finish the discussion of Proposition~\ref{prop:induction_1} by proving the auxiliary claims.

\begin{proof}[Proof of Claim~\ref{claim:auxiliary_1}.]
Since the proofs of both bounds are analogous, we only show the first one. Recall that $E_{s, i}$ holds whenever $\by_i(s)\by_s(s)\geq -c_p/\sqrt{d}-\langle \pi_{s-1}(\by_i), \pi_{s-1}(\by_s)\rangle$. Note that the only random variable in this equation is $\by_i(s)$, since $\by_s(s)$ was fixed and all of the other ones are determined by $M[s-1]$. Since the variables $\by_{s+1}(s), \dots, \by_{r}(s)$ are independent, so are the events $E_{s, s+1}, \dots, E_{s, r}$.

Thus, it is sufficient to compute the probabilities of the individual events $E_{s, i}$. Thus,
{\smaller[0.3]
\[\Pb\big[E_{s, i}|M[s\!-\!1]\big]=\Pb\bigg[\by_i(s)\geq -\frac{1}{\by_s(s)}\Big(c_p/\sqrt{d}\!+\!\langle \pi_{s-\!1}(\by_i), \pi_{s-\!1}(\by_s)\rangle\Big)\bigg| M[s\!-\!1]\bigg].\]
}
Recall that $\by_s(s)\in (1-2\delta, 1+\delta)$, and note that by Cauchy-Schwarz inequality
$|\langle \pi_{s-1}(\by_i), \pi_{s-1}(\by_s)\rangle|\leq \|\pi_{s-1}(\by_i)\|\cdot \|\pi_{s-1}(\by_s)\|\leq \frac{\alpha^2 \ell}{d}=O(\frac{1}{D\sqrt{d}})$. Therefore the cutoff for $E_{s, i}| M[s\!-\!1]$ to hold equals 
$$b_i=-c_p/\sqrt{d}-\langle \pi_{s-1}(\by_i), \pi_{s-1}(\by_s)\rangle+O(\delta/{\sqrt{d}}).$$ 

Since $\by_i(s)\sim \cN(0, \frac{1}{d})$, log-concavity of $\Phi$ (see (\ref{eqn:log_concavity})) implies that
{\smaller[0.3]
\[\Pb[\by_i(s)\geq b_i]=\Pb[\cN(0, 1)\leq -\sqrt{d}b_i]\leq \Pb[\cN(0, 1)\leq c_p]\exp\Big(\frac{\phi(c_p)}{\Phi(c_p)}(-\sqrt{d}b_i\!-\!c_p)\Big).\]
}
In our setup, we have $\Pb[\cN(0, 1)\leq c_p]=\Phi(c_p)=1-p$ and $\phi(c_p)=a$. Thus, we have
{\smaller[0.3]
\begin{align*}
&\Pb[E_{s, i}]=\Pb[\by_i(s)\geq b_i]\leq (1\!-\!p)\exp\Big(\frac{a\sqrt{d}}{1\!-\!p}\langle \pi_{s-1}(\by_i), \pi_{s-1}(\by_s)\rangle\!+\!O(\delta)\Big),
\end{align*}
}
where we have used that $-\sqrt{d}b_i-c_p=\sqrt{d}\langle \pi_{s-1}(\by_i), \pi_{s-1}(\by_s)\rangle+O(\delta)$. Multiplying over all $s< i\leq r$, we obtain
{\smaller[0.3]
\[\prod_{s<i\leq r}\Pb\Big[E_{s, i}\Big|M[s\!-\!1]\Big]\leq (1\!-\!p)^{r-s}\exp\bigg(\frac{a\sqrt{d}}{1\!-\!p}\sum_{i=s+1}^r \langle \pi_{s-1}(\by_i), \pi_{s-1}(\by_s)\rangle\!+\!O\big((r\!-\!s)\delta\big)\bigg),\]
}
which is sufficient to complete the proof due to the independence of the events $E_{s, i}$.
\end{proof}

\begin{proof}[Proof of Claim~\ref{claim:auxiliary_2}.]
Again, we will focus on proving (\ref{eq:auxiliary_2_blue}), since the proof of (\ref{eq:auxiliary_2_red}) is analogous. Recall that $E_{s, i}$ holds whenever $\by_i(s)\by_s(s)\geq -c_p/\sqrt{d}-\langle \pi_{s-1}(\by_i), \pi_{s-1}(\by_s)\rangle$. Since the diagonal entries $\by_s(s)$ are fixed, the events $E_{s, i}$ are independent and depend only on $\by_i(s)$. So, conditioning on their intersection leaves the variables $\by_{s+1}(s), \dots, \by_{r}(s)$ independent. Moreover, conditioning on the event $E_{s, i}$, each $\by_i(s)$ follows the distribution of a lower truncated Gaussian with the cutoff $b_i=(-c_p/\sqrt{d}-\langle \pi_{s-1}(\by_i), \pi_{s-1}(\by_s)\rangle)/\by_s(s)$.

As we have observed already, truncated Gaussian random variables have variance proxy at most $1/d$, and so we can apply Lemma~\ref{lemma:exponential_moment_special_case} with $\lambda=\frac{a\sqrt{d}}{1-p}$, $k=r-s$, random variables $\by_{s+1}(s), \dots, \by_{r}(s)$ and $S=\sum_{s<i<j\leq r}\by_i(s)\by_j(s)$ to obtain
{\smaller[0.3]
\[\bE[e^{\lambda S}]\leq \exp\bigg( \lambda \bE[S] \!+\! \frac{\lambda^2k^2}{d}\sum_{j=s+1}^r (\bE \by_j(s))^2 \!+\!\frac{4\lambda k}{d}\bigg).\]
}
Here, it is important to verify that $d\geq 4\lambda k$, which follows since $d=D^2\ell^2, \lambda=aD\ell/(1-p)$ and $k=C\ell$, reducing the inequality to $D\ell\geq aC\ell/(1-p)$, which holds since $D$ is sufficiently larger than $C$. Hence, in order to complete the proof, we only need to verify that
{\smaller[0.3]
\[\lambda \bE[S] \!+\! \frac{\lambda^2k^2}{d}\sum_{j=s+1}^r (\bE \by_j(s))^2 \!+\!\frac{4\lambda k}{d}\leq \frac{a^3}{(1\!-\!p)^3\sqrt{d}}\binom{r\!-\!s}{2}\!+\!O\Big(\frac{(r\!-\!s)^2\delta}{\sqrt{d}}\!+\!\frac{(r\!-\!s)^3}{d}\!+\!\frac{r-s}{\sqrt{d}}\Big).\]
}
The last term on the left hand side is bounded by $4\lambda k/d\leq O(\sqrt{d}(r-s)/d)$.

Using the independence of $\by_{s+1}(s), \dots, \by_r(s)$, we can upper bound $\bE[S]$ as follows
{\smaller[0.3]
\[\bE[S]=\sum_{s<i<j\leq r}\bE[\by_i(s)]\bE[\by_j(s)]\leq \frac{1}{2}\Big(\sum_{s<i\leq r}\bE[\by_i(s)]\Big)^2\leq \frac{r\!-\!s}{2}\sum_{s<i\leq r}\bE[\by_i(s)]^2,\]
}
where we have used the inequality between the arithmetic and the quadratic mean in the last step. Thus, we have
{\smaller[0.3]
\[\lambda \bE[S] \!+\! \frac{\lambda^2k^2}{d}\sum_{j=s+1}^r (\bE \by_j(s))^2\leq \lambda \frac{r\!-\!s}{2}\Big(1\!+\!\frac{2\lambda k}{d}\Big)\sum_{i=s+1}^r \bE[\by_i(s)]^2.\]
}

We now need to calculate the expectations of $\by_i(s)$, which we do using Lemma~\ref{lemma:truncated_expectation}. Let us recall that $\by_i(s)$ is a lower truncated Gaussian with the cutoff $b_i=-(c_p/\sqrt{d}+\langle \pi_{s-1}(\by_i), \pi_{s-1}(\by_s)\rangle)/\by_s(s)$. If we write $b_i=-c_p/\sqrt{d}+\eps_i$, then $|\eps_i|=|b_i+c_p/\sqrt{d}|=O(\frac{1}{D\sqrt{d}}+\frac{\delta}{\sqrt{d}})=O(\frac{1}{D\sqrt{d}})$. The last inequality holds since $|\langle \pi_{s-1}(\by_i), \pi_{s-1}(\by_j)\rangle|\leq \frac{\alpha^2\ell}{d}\leq O(\frac{1}{D\sqrt{d}})$ and since $\by_s(s)$ is fixed and in $(1-2\delta, 1+\delta)$. Then, Lemma~\ref{lemma:truncated_expectation} gives
{\smaller[0.3]
\[\bE[\by_i(s)|E_{s, i}]=\frac{\phi(c_p)}{\Phi(c_p)\sqrt{d}}\!+\!O(|\eps_i|)=\frac{a}{(1\!-\!p)\sqrt{d}}\!+\!O\Big(\frac{1}{D\sqrt{d}}\Big).\]
}
Recalling that $\lambda=\frac{a\sqrt{d}}{1-p}$ and $\frac{r-s}{\sqrt{d}}\leq \frac{C\ell}{D\ell}\leq O(\frac{1}{D})$, we get
{\smaller[0.3]
\begin{align*}
\lambda \frac{r\!-\!s}{2} \Big(1\!+\!\frac{2\lambda (r\!-\!s)}{d}\Big)\sum_{i=s+1}^r \bE[\by_i(s)]^2&\leq \frac{a\sqrt{d}(r\!-\!s)}{2(1\!-\!p)}\Big(1\!+\!O\Big(\frac{r\!-\!s}{\sqrt{d}}\Big)\Big)\sum_{i=s+1}^r \left[\frac{a^2}{(1\!-\!p)^2d}\!+\!O\left(\frac{1}{Dd}\right)\right]\\
&\leq \frac{a^3}{(1\!-\!p)^3\sqrt{d}}\binom{r\!-\!s}{2}\Big(1\!+\!O\big(\frac{1}{D}\big)\Big),
\end{align*}
}
which is what we needed to show.
\end{proof}
\end{proof}

The only remaining step is to prove Proposition~\ref{prop:perfect_probabilities} from Proposition~\ref{prop:induction_1}.

\begin{proof}[Proof of Proposition~\ref{prop:perfect_probabilities}.]
By plugging in $s=0$ in Proposition~\ref{prop:induction_1}, we find that
{\smaller[0.3]
\begin{align*}
    \Pb[\bx_1, \dots, \bx_r \text{ form a red clique and are perfect}]&\leq p^{\binom{r}{2}}\exp\bigg(\!-\!\frac{a^3}{p^3\sqrt{d}}\binom{r}{3}\!+\!O\Big(\frac{r^4}{d}\!+\!\frac{r}{\sqrt{d}}\Big)\bigg)\\
    &=p^{\binom{r}{2}}\exp\bigg(\!-\!\frac{a^3}{p^3\sqrt{d}}\binom{r}{3}\Big(1\!+\!O\Big(\frac{1}{D}\Big)\Big)\bigg),
\end{align*}
}
since $\frac{r}{\sqrt{d}}\leq O(1/D)$. Similarly,
{\smaller[0.3]
\begin{align*}
    \Pb[\bx_1, \dots, \bx_r \text{ form a blue clique and are perfect}]&\leq (1\!-\!p)^{\binom{r}{2}}\exp\bigg(\frac{a^3}{(1\!-\!p)^3\sqrt{d}}\binom{r}{3}\!+\!O\Big(\frac{r^4}{d}\!+\!\frac{r}{\sqrt{d}}\Big)\bigg)\\
    &=(1\!-\!p)^{\binom{r}{2}}\exp\bigg(\frac{a^3}{(1\!-\!p)^3\sqrt{d}}\binom{r}{3}\Big(1\!+\!O\Big(\frac{1}{D}\Big)\Big)\bigg).
\end{align*}
}
This completes the proof.
\end{proof}

\section{Concluding remarks}\label{sec:concluding}

In this paper, we have presented an improvement upon the lower bounds for Ramsey numbers $R(\ell, C\ell)$ using so-called Gaussian random graphs. The simplified analysis we develop yields in particular $R(\ell, C\ell)\geq ((1+\eps)p_C^{-1/2})^{\ell}$ for some absolute constant $\eps>0$, at least when $C$ is bounded away from $1$. However, we suspect that the same construction may in fact yield even the stronger lower bound $R(\ell, C\ell)\geq p_C^{-(1+\eps)\ell/2}$ for some absolute constant $\eps>0$, when $C$ is bounded away from $1$.

Let us briefly and informally outline the intuition behind this belief, as well as the steps we think are necessary to get there. In Proposition~\ref{prop:clique_probabilities}, we showed that
{\smaller[0.3]
\begin{align*}
&P_{\red, \ell}\lesssim p^{\binom{\ell}{2}}\exp\bigg(-\frac{a^3}{p^3\sqrt{d}}\binom{\ell}{3}\bigg)\lesssim p^{\binom{\ell}{2}}\exp\bigg(-\frac{a^3}{3p^3 D}\binom{\ell}{2}\bigg),
\end{align*}
}\noindent
where we used $d=D^2\ell^2$. This estimate shows that the smaller $D$ is, the better the bound on $P_{\red, \ell}$. Thus, the key question is the following: \textit{how small can $D$ be while still ensuring that our bounds on $P_{\red, \ell}$ and $P_{\blue, C\ell}$ hold?}

In Appendix~\ref{sec:appendix_2}, we showed that $D$ can be taken as small as $\Theta((\log p^{-1})^{3/2})$. Since $a=\phi(\Phi^{-1}(p))=(1+o(1))p\sqrt{2\log p^{-1}}$ (where $o(1)\to 0$ as $p\to 0$), we roughly obtain
{\smaller[0.3]
\begin{align*}
&P_{\red, \ell}\lesssim \Big(p\exp\big(-\frac{a^3}{3p^3D}\big)\Big)^{\binom{\ell}{2}}\approx \bigg(p\exp\Big(-\frac{(2\log p^{-1})^{3/2}}{3D}\Big)\bigg)^{\binom{\ell}{2}}\approx \big(p(1-\eps)\big)^{\binom{\ell}{2}}, \text{ for some fixed }\eps>0.
\end{align*}
}\noindent
Since a similar bound can be established for $P_{\blue, C\ell}$, this ultimately yields $R(\ell, C\ell)\geq ((1+\eps)p_C^{-1/2})^{\ell}$. However, if one could show that $D$ can be taken as low as $\Theta(\sqrt{\log p^{-1}})$, then we would have
{\smaller[0.3]
\begin{align*}
&P_{\red, \ell}\lesssim \bigg(p\exp\Big(-\frac{(2\log p^{-1})^{3/2}}{3D}\Big)\bigg)^{\binom{\ell}{2}}\approx \bigg(p\exp\Big(-\eps\log p^{-1}\Big)\bigg)^{\binom{\ell}{2}}\approx \big(p^{1+\eps}\big)^{\binom{\ell}{2}}, \text{ for some fixed }\eps>0.
\end{align*}
}\noindent
If analogous bounds held for $P_{\blue, C\ell}$ as well, then this would immediately give $R(\ell, C\ell)\geq p_C^{-(1+\eps)\ell/2}$. Moreover, note that Proposition~\ref{prop:perfect_probabilities_appendix_red} \textit{already establishes} a bound of this type for $P_{\red, \ell}$ when $D=\Theta(\sqrt{\log p^{-1}})$. Thus, to obtain the bound $R(\ell, C\ell)\geq p_C^{-(1+\eps)\ell/2}$, it would suffice to improve the analysis of blue cliques so as to allow $D=\Theta(\sqrt{\log p^{-1}})$. At present, Proposition~\ref{prop:perfect_probabilities_appendix_blue} requires $D=\Theta((\log p^{-1})^{3/2})$, which is too large for this improvement, as discussed above.

Finally, if one can establish $R(\ell, C\ell)\geq p_C^{-(1+\eps)\ell/2}$, then further improvements to lower bounds for off-diagonal Ramsey numbers $R(s,t)$, with $s$ fixed and $t\to\infty$, may be within reach.

\vspace{0.3cm}
\noindent
{\bf Acknowledgments.} We would like to thank Kiril Bangachev, Bo'az Klartag and Petar Nizi\'c-Nikolac for insightful discussions about this project.

\appendix

\section{Proofs of Preliminaries}\label{sec:appendix_1}

\begin{proof}[Proof of Lemma~\ref{lemma:norm_concentration}.]
If $\|\bx\|_2\notin (1-\delta, 1+\delta)$, then we also have $\|\bx\|_2^2\notin (1-\delta, 1+\delta)$. As we have already observed, if we define $Y = d\|\bx\|_2^2$ we have $Y\sim\chi^2_d$.
Thus, we can write
{\smaller[0.3]
\[\Pb\big[\|\bx\|_2\notin (1-\delta, 1+\delta)\big]\leq \Pb\big[\|\bx\|_2^2\notin (1-\delta, 1+\delta)\big]=\Pb\Big[Y\notin \big((1-\delta)d, (1+\delta)d\big)\Big].\]
}

By Laurent-Massart inequality we have $\Pb[|Y-d|\geq 2\sqrt{dt}+2t]\leq 2e^{-t}$,
and plugging in $t=\delta^2d/10$, we obtain
{\smaller[0.3]
\[\Pb\big[\|\bx\|_2\notin (1-\delta, 1+\delta)\big]= \Pb\Big[|Y-d|\geq \delta d\Big]\leq \Pb\Big[|Y-d|\geq 2\sqrt{dt}+2t\Big]\leq 2e^{-t}=2\exp(-\delta^2 d/10),\]
}
where we have used that $2\sqrt{dt}+2t=2\frac{\delta d}{\sqrt{10}}+2\frac{\delta^2 d}{10}\leq \delta d$ since $\delta<1$.
\end{proof}

\begin{proof}[Proof of Lemma~\ref{lemma:norm_projection}.]
Since the normal distribution $\cN(0, \frac{1}{d}I_d)$ is invariant under the action of the orthogonal group, i.e. under rotations, we may assume that $W$ is simply the subspace spanned by the first $s$ coordinate vectors $\be_1, \dots, \be_s$.

It is then easy to see that $\pi_W(\bx)$ follows the distribution $\cN(0, \frac{1}{d}I_s)$ inside $W$. Hence, $\sqrt{d}\pi_W(\bx)\sim \cN(0, I_{s})$ and we can apply Theorem~\ref{thm:laurent_massart} to obtain a tail bound on $\|\sqrt{d}\pi_W(\bx)\|_2^2\sim \chi_s^2$. In particular, we have
{\smaller[0.3]
\[\Pb\Big[\|\pi_W(\bx)\|_2\geq \frac{\alpha\sqrt{\ell}}{\sqrt{d}}\,\,\Big]= \Pb\bigg[\,\Big\|\sqrt{d}\pi_W(\bx)\Big\|_2\geq \alpha\sqrt{\ell}\,\bigg]\leq \Pb\bigg[\chi_s^2\geq \alpha^2 \ell\bigg]\leq \Pb\bigg[\chi_s^2>s+\frac{3}{4}\alpha^2\ell\bigg].\]
}
Picking $t=\frac{\alpha^2 \ell}{8}$, we get $2\sqrt{st}+2t = 2\sqrt{\frac{\alpha^2 \ell s}{8}}+2\frac{\alpha^2 \ell}{8}\leq \frac{3\alpha^2 \ell}{4}$, where the inequality follows since $\alpha^2 \ell/8>s$. Hence, by the Laurent-Massart inequality, we get
{\smaller[0.3]
\[\Pb\Big[\|\pi_W(\bx)\|_2\geq \frac{\alpha\sqrt{\ell}}{\sqrt{d}}\,\,\Big]\leq \Pb\bigg[\chi_s^2>s+2\sqrt{st}+2t\bigg]\leq \exp(-\alpha^2\ell/8).\qedhere\]
}
\end{proof}

\begin{proof}[Proof of Lemma~\ref{lemma:truncated_expectation}.]
We will start by proving the exact formulas, when the cutoff is exactly $b$. Let $Z=\sqrt{d}X$ be the standard Gaussian. Observe that the function $\phi(z)$ satisfies $\phi'(z) = -ze^{-z^2/2}/\sqrt{2\pi}=-z\phi(z)$. Hence, for any $t\in \bR$, we can write $\int_{-\infty}^{t} z \phi(z) \, dz =  -\phi(t)$. We can now compute the expectations of the truncated Gaussians:
{\smaller[0.3]
\begin{align*}
    \bE[X | X \leq b/\sqrt{d}] = \frac{1}{\sqrt{d}}\bE\bigg[Z \,\Big|\, Z \leq b\bigg]
    = \frac{1}{\sqrt{d}} \cdot \frac{\int_{-\infty}^{b} z \phi(z) \, dz}{\Pb[Z \leq b]}
    = \frac{1}{\sqrt{d}} \cdot \frac{-\phi(b)}{\Phi(b)}.
\end{align*}
}
To compute the other expectation, we can symmetrically write $\int_{t}^{\infty} z \phi(z) \, dz = \phi(t)$. Then,
{\smaller[0.3]
\begin{align*}
    \bE[X | X \geq b/\sqrt{d}] =\frac{1}{\sqrt{d}}\bE\bigg[Z \,\Big|\, Z \geq b\bigg] =  \frac{1}{\sqrt{d}} \cdot \frac{\int_{b}^{\infty} z \phi(z) \, dz}{\Pb[Z \geq b]} = \frac{1}{\sqrt{d}} \cdot \frac{\phi(b)}{1 - \Phi(b)}.
\end{align*}
}

To prove the second part of the statement, we define the function $f(t) = \bE[X | X \leq t/\sqrt{d}]$. We have already shown that $f(t)=-\frac{\phi(t)}{\Phi(t)\sqrt{d}}$. Moreover, note that $f'(t)=\frac{t\phi(t)\Phi(t)+\phi(t)^2}{\Phi(t)^2\sqrt{d}}$ is a function bounded by $O(1/\sqrt{d})$ on the real line. This is because $\sqrt{d}f'(t)=\frac{\phi(t)}{\Phi(t)}\big(t+\frac{\phi(t)}{\Phi(t)}\big)$ is a smooth function on the real line with $\lim_{t\to \infty}\sqrt{d}f'(t)=0$ and
{\smaller[0.3]
\[2\geq \Big(|t|+\frac{1}{|t|}\Big)\frac{1}{|t|}\geq \frac{\phi(t)}{\Phi(t)}\Big(t+\frac{\phi(t)}{\Phi(t)}\Big)\geq 0,\]
}
for all $t\leq -1$ due to (\ref{eqn:mills_ratio}). Finally, by the Mean value theorem, we conclude that $f(b+\eps) = f(b) + f'(c) \cdot \eps$ for some $c\in(b, b+\eps)$. From this, we have
{\smaller[0.3]
\[\bE\bigg[X | X \leq \frac{b+\eps}{\sqrt{d}}\bigg] = \bE\bigg[X | X \leq \frac{b}{\sqrt{d}}\bigg] + f'(c)\eps = -\frac{\phi(b)}{\Phi(b)\sqrt{d}} + O\bigg(\frac{\eps}{\sqrt{d}}\bigg), \]
}
which is what we needed. The computation of $\bE[X | X \geq (b+\eps)/\sqrt{d}]$ is analogous.
\end{proof}

\begin{proof}[Proof of Lemma~\ref{lemma:exponential_square_moments}.]
Before we start with the proof, observe that $X$ satisfies Gaussian-like tail bounds. Namely, for any $u\in (0, \infty)$, we can set $\theta=u/\sigma^2$ and obtain the following
{\smaller[0.3]
\[\Pb[X\geq u]=\Pb[e^{\theta X}\geq e^{\theta u}]\leq \frac{\bE[e^{\theta X}]}{e^{\theta u}}\leq e^{\theta^2 \sigma^2/2-\theta u}= e^{-u^2/2\sigma^2}.\]
}
Note that the first inequality here is due to Markov's inequality and the second one to the subgaussian property of $X$. Thus, we also have $\Pb[|X|\geq u]\leq 2e^{-u^2/2\sigma^2}$

Now, we can compute the needed expectation as follows. Since $e^{\lambda X^2}\geq 1$, we have
{\smaller[0.3]
\[\bE[e^{\lambda X^2}]=1+\int_1^\infty \Pb[e^{\lambda X^2}\geq t]dt=1+\int_1^\infty \Pb\bigg[|X|\geq \sqrt{\frac{\log t}{\lambda}}\,\,\bigg]dt\leq 1+2\int_{1}^\infty e^{-\log t/2\lambda \sigma^2}dt,\]
}
where the last inequality comes from the tail bound on $X$. Finally, the integral in question is easily computable when $2\lambda\sigma^2<1$ since then we have
{\smaller[0.3]
\[\bE[e^{\lambda X^2}]\leq 1+2\int_{1}^\infty t^{-1/2\lambda \sigma^2}dt=1+2\bigg(\frac{t^{1-1/2\lambda\sigma^2}}{1-1/2\lambda\sigma^2}\bigg|_{1}^\infty\bigg)=1-2\frac{1}{1-1/2\lambda\sigma^2}=1+\frac{4\lambda \sigma^2}{1-2\lambda\sigma^2}.\qedhere\]
}
\end{proof}

\begin{proof}[Proof of Lemma~\ref{lemma:exponential_moment_special_case}.]
If $A$ is the all-ones matrix with zeros on the diagonal, then $S=\frac{1}{2}X^\top A X$, where $X=(X_1, \dots, X_k)^\top$ is a subgaussian vector in $\bR^k$. Then, we can write
{\smaller[0.3]
\[X^\top AX= \bE[X]^\top A \bE[X]+2 (X -\bE[X])^\top A \bE[X] + (X-\bE X)^\top A (X-\bE X).\]
}
Introducing $Y=X-\bE X$ and using that $A$ is zero on the diagonal, this reduces to $X^\top AX=\bE[X^\top A X]+2 Y^\top A \bE[X] + Y^\top A Y$. If we let $\Sigma_0=\frac{1}{2}\bE[X^\top A X]=\bE[S], \Sigma_1 = Y^\top A \bE[X]$ and $\Sigma_2=\frac{1}{2}Y^\top A Y$, then we can write $S=\Sigma_0+\Sigma_1+\Sigma_2$, and thus  $\bE\big[e^{\lambda S}\big]=e^{\lambda\Sigma_0}\bE[e^{\lambda\Sigma_1}e^{\lambda\Sigma_2}]$. The Cauchy-Schwarz inequality then gives
{\smaller[0.3]
\begin{equation}\label{eqn:sigma_decomposition}
\bE\big[e^{\lambda S}\big] \leq e^{\lambda\Sigma_0}\sqrt{\bE[e^{2\lambda\Sigma_1}]}\sqrt{\bE[e^{2\lambda\Sigma_2}]}.
\end{equation}
}
Since the entries of $Y$ are independent, we have
{\smaller[0.3]
\begin{align}
\bE[e^{2\lambda\Sigma_1}]&\leq \prod_{i=1}^k \bE\big[e^{2\lambda (A\bE[X])_i\cdot Y_i}\big]\leq \prod_{i=1}^k \exp\bigg(\frac{2\lambda^2 (A\bE[X])_i^2}{d}\bigg)\notag\\
&\leq \exp\bigg(\sum_{i=1}^k \frac{2\lambda^2}{d}k\sum_{j=1}^k (\bE X_j)^2\bigg)=\exp\bigg(\frac{2\lambda^2k^2}{d}\sum_{j=1}^k (\bE X_j)^2\bigg),\label{eqn:sigma_1_bound}
\end{align}
}
where the second inequality follows since $Y_i$ is subgaussian with mean zero and variance proxy at most $1/d$ and the third inequality by Cauchy-Schwarz.

Thus, in order to complete the proof, we only need to show that $\bE[e^{2\lambda \Sigma_2}]\leq \exp(8|\lambda| k/d)$. To do this, we consider two cases - when $\lambda\geq 0$ and when $\lambda<0$. Let us start with $\lambda\geq 0$. Since $2\Sigma_2=\sum_{i\neq j}Y_iY_j=\Big(\sum_{i=1}^k Y_i\Big)^2-\sum_{i=1}^k Y_i^2\leq \Big(\sum_{i=1}^k Y_i\Big)^2$, we have
{\smaller[0.3]
\[\bE\big[e^{2\lambda\Sigma_2}\big]\leq \bE\bigg[\exp\bigg(\lambda\bigg(\sum_{i=1}^k Y_i\bigg)^2\bigg)\bigg].\]
}
Note that the random variable $Z=\sum_{i=1}^k Y_i$ is a subgaussian random variable with variance proxy at most $k/d$, as a sum of $k$ subgaussians of variance proxy $1/d$. Therefore, using Lemma~\ref{lemma:exponential_square_moments} we can bound the expectation of $\exp(\lambda Z^2)$ as
{\smaller[0.3]
\begin{equation*}
    \bE\big[e^{2\lambda\Sigma_2}\big]\leq \bE\big[e^{\lambda Z^2}\big]\leq 1+\frac{4\lambda k/d}{1-2\lambda k/d}\leq 1+\frac{8\lambda k}{d}\leq \exp\bigg(\frac{8\lambda k}{d}\bigg).
\end{equation*}
}
When $\lambda<0$, we use $2\Sigma_2=\Big(\sum_{i=1}^k Y_i\Big)^2-\sum_{i=1}^k Y_i^2\geq -\sum_{i=1}^k Y_i^2$, leading to
{\smaller[0.3]
\[\bE\big[e^{2\lambda\Sigma_2}\big]\leq \bE\bigg[\exp\bigg(-\lambda\sum_{i=1}^k Y_i^2\bigg)\bigg]=\prod_{i=1}^k \bE[e^{-\lambda Y_i^2}],\]
}
where the last inequality is due to the independence of the variables $Y_i$. Since $Y_i$ has variance proxy at most $1/d$, by Lemma~\ref{lemma:exponential_square_moments}, we find that
{\smaller[0.3]
\[\bE[e^{2\lambda\Sigma_2}]\leq \prod_{i=1}^k \bE[e^{-\lambda Y_i^2}]\leq \prod_{i=1}^k \bigg(1+\frac{4|\lambda|/d}{1-2|\lambda|/d}\bigg)\leq \prod_{i=1}^k \exp\bigg(\frac{8|\lambda|}{d}\bigg)=\exp\bigg(\frac{8|\lambda| k}{d}\bigg).\]
}
By plugging in the bounds (\ref{eqn:sigma_1_bound}) and $\bE[e^{2\lambda \Sigma_2}]\leq \exp(8|\lambda| k/d)$ into (\ref{eqn:sigma_decomposition}), we find that
{\smaller[0.3]
\[\bE\big[e^{\lambda S}\big]\leq e^{\lambda \bE[S]}\sqrt{\exp\bigg(\frac{2\lambda^2k^2}{d}\sum_{j=1}^k (\bE X_j)^2\bigg)}\sqrt{\exp\bigg(\frac{8|\lambda| k}{d}\bigg)}\leq \exp\bigg( \lambda \bE[S] + \frac{\lambda^2k^2}{d}\sum_{j=1}^k (\bE X_j)^2 +\frac{4|\lambda| k}{d}\bigg),\]
}
which is what we needed to prove.
\end{proof}

\section{Tightening the bounds}\label{sec:appendix_2}

The purpose of this appendix is to present an optimized version of the method from Section~\ref{sec:induction}, extending it to the regime where $D$ is not much larger than $C$. As indicated in Section~\ref{sec:concluding}, this allows for improved quantitative bounds on $R(\ell, C\ell)$, which we present in the following theorem.

\begin{theorem}\label{thm:main_appendix}
For every sufficiently large constant $C$, there is an integer $\ell_0(C)$ such that for all $\ell>\ell_0(C)$ we have 
{\smaller[0.3]
\[R(\ell, C\ell)\geq (e^{1/24}p_C^{-1/2})^\ell,\]
}
where $p_C\in (0, 1/2)$ is the unique solution to the equation $C=\frac{\log p_C}{\log (1-p_C)}$.
\end{theorem}
In the proof of this theorem, we first take $C$ sufficiently large, define $p_C$, choose $p=p_C+1/C$ and $D=4aC/(1-p)$ in the final bookkeeping step, and then take $\ell$ sufficiently large as a function of $C$.

Unlike Section~\ref{sec:induction} where we had a lot of room to spare, in the proof of Theorem~\ref{thm:main_appendix} we must be more careful with various estimates. As a consequence, we prefer to present the arguments for bounding the probability of blue and red cliques separately, since they are optimized in slightly different ways. We hope that this will contribute to the clarity, even though certain parts of the argument from Section~\ref{sec:induction} will be repeated twice in what follows.

For example, in Section~\ref{sec:perfect}, we said that a sequence $\bx_1, \dots, \bx_r$ is perfect if for all $1\leq i \leq r$ we have:
{\smaller[0.3]
\[\|\bx_i\|_2\in (1-\delta, 1+\delta),\text{ and } \|\pi_{\spn\{\bx_1, \dots, \bx_{i-1}\}}(\bx_i)\|_2 \leq \frac{\alpha\sqrt{\ell}}{\sqrt{d}},\]
}
where $\alpha=10\sqrt{C\log (10/p)}$ and $\delta=\alpha d^{-1/4}$. The main reason for choosing this value of $\alpha$ was to ensure that, given $\bx_1, \dots, \bx_{i-1}$, the probability that a random vector $\bx_i$ violates the second condition is at most $(p/10)^{10C\ell}$ (see the proof of Proposition~\ref{prop:clique_probabilities}). However, observe that this is only necessary when dealing with blue cliques, which have up to $C\ell$ vertices: if the probability that a random vector $\bx_i$ violates the second condition is at most $(p/10)^{10\ell}$, the derivation of the Proposition~\ref{prop:clique_probabilities} from the bounds on $P_{\red, r}^*$ would go through unchanged.  

This means that one could use a different value of $\alpha$ for red and blue cliques, and this is precisely what we will do in this appendix by setting $\alpha_{\blue} = 10\sqrt{C\log (10/p)}$ as before, but setting $\alpha_{\red} = 10\sqrt{\log (10/p)}$. Since $C\approx p_C^{-1}\log p_C^{-1}$, this saves a factor of approximately $p_C^{-1/2}$ in $\alpha_{\red}$, which will be very useful to us.

To reiterate, in this appendix, we will say a sequence $\bx_1, \dots, \bx_r$ with $1\leq r\leq \ell$ is \textit{red-perfect} if for all $1\leq i \leq r$ we have 
{\smaller[0.3]
\[\|\bx_i\|_2\in (1-\delta, 1+\delta),\text{ and } \|\pi_{\spn\{\bx_1, \dots, \bx_{i-1}\}}(\bx_i)\|_2 \leq \frac{\alpha_{\red}\sqrt{\ell}}{\sqrt{d}},\]
}
and we will say that a sequence $\bx_1, \dots, \bx_r$ with $1\leq r\leq C\ell$ is \textit{blue-perfect} if for all $1\leq i \leq r$ we have
{\smaller[0.3]
\[\|\bx_i\|_2\in (1-\delta, 1+\delta),\text{ and } \|\pi_{\spn\{\bx_1, \dots, \bx_{i-1}\}}(\bx_i)\|_2 \leq \frac{\alpha_{\blue}\sqrt{\ell}}{\sqrt{d}},\]
}
where $\delta=\alpha_{\blue} d^{-1/4}$ in both cases (as before, the particular value of $\delta$ is not of crucial importance). Naturally, we denote by $P_{\red, r}^*$ the probability that the sequence $x_1, \dots, x_r$ is red-perfect and forms a red clique, and similarly for $P_{\blue, r}^*$.

Throughout this appendix, we take over the notation and definitions from the main body of the paper. Additionally, we change the convention, which was used in the main body, that the implied constant in $O(\cdot)$ can depend on $C$ and $p$. In this appendix, the implied constant $O(\cdot)$ will always be absolute.

\subsection{Blue cliques} We begin by showing an upper bound on $P_{\blue, r}^*$, which corresponds to the bound shown in Proposition~\ref{prop:clique_probabilities}.

\begin{proposition}\label{prop:perfect_probabilities_appendix_blue}
Let $C>1$, let $p$ be a sufficiently small real number, and let $\ell, d=D^2\ell^2$ be large positive integers, with $D\geq \frac{4aC}{1-p}$. Then, for every integer $10\leq r\leq C\ell$, we have
{\smaller[0.3]
\begin{align}
P_{\blue, r}^*\leq (1-p)^{\binom{r}{2}}\exp\bigg(\frac{4a^3}{(1-p)^3\sqrt d}\binom{r}{3} + d^{-1/5}\binom{r}{2}+ r\bigg),\label{eq:perfect_probabilities_appendix_blue}
\end{align}
}
where $a=\phi(c_p)$.
\end{proposition}

Note that the terms containing $\binom{r}{2}$ and $r$ are lower-order terms when $r\approx \sqrt{d}$. We present them only to ensure that the statement still holds for all $r\geq 3$, even though we will only apply it with $r=C\ell$. 
The function 
{\smaller[0.3]
\[
f_r(x)=\frac{ar}{(1-p)\sqrt{d}}\cdot \frac{\phi(x)^2}{\Phi(x)^2}+\log\Phi(x)
\]
}
will play an important role in the proof, for technical reasons which we will explain later. This function has several convenient properties - it is smooth, negative, increasing and concave whenever $\kappa=\frac{(1-p)\sqrt d}{ar}\geq 4$ (which follows from the assumption $D\geq 4aC/(1-p)$). Let us remark that the exact constant $\frac{ar}{(1-p)\sqrt d}$ in definition of the function $f_r$ is not of crucial importance - the constant could have easily been replaced with say $\frac{1}{4}$ and the proof would go through essentially unchanged. However, we choose this particular normalization to highlight the similarity between Proposition~\ref{prop:perfect_probabilities_appendix_blue} and Proposition~\ref{prop:perfect_probabilities}, at the expense of having slightly longer formulas.

\begin{lemma}\label{lemma:function_analysis}
For every $\kappa\geq 4$, the function
{\smaller[0.3]
\[
f(x)=\frac{\phi(x)^2}{\kappa\Phi(x)^2}+\log\Phi(x)
\]
}
is negative, increasing, and concave for $x\in \bR$.
\end{lemma}
\begin{proof}
To simplify the notation, we will define $R(x) = \phi(x)/\Phi(x)$. It is easy to compute the derivative of $R$, which is $R'(x) = \frac{\phi'(x)\Phi(x)-\phi(x)\Phi'(x)}{\Phi(x)^2}= -x R(x) - R(x)^2$, since $\Phi'(x)=\phi(x)$ and $\phi'(x)=-x\phi(x)$. Also, recall that a standard inequality of Sampford implies that $R'(x)\in (-1, 0)$ for all $x\in \bR$ (see equation (3) in \cite{Sampford}). 

First consider $g(x)=\frac{1}{4}R(x)^2+\log \Phi(x)$. Differentiating, we obtain $g'(x) = \frac{1}{2}R(x)R'(x) + \frac{\phi(x)}{\Phi(x)} = R(x)\left[1 + \frac{1}{2}R'(x)\right]$. Since $R'(x)\in (-1, 0)$, we have $1 + \frac{1}{2}R'(x) > 1/2$, and since $R(x) > 0$, we have $g'(x) > 0$. 
As $g$ is strictly increasing and $\lim_{x \to \infty} g(x) = 0$, it follows that $g(x) < 0$ for all finite $x$.

To establish concavity, we compute the second derivative:
{\smaller[0.3]
\[
g''(x) = \frac{1}{2} \left[ (R'(x))^2 + R(x)R''(x) \right] + R'(x).
\]
}
Differentiating the defining equation for $R'(x)$ yields $R''(x) = -R(x) - x R'(x) - 2R(x)R'(x)$. Substituting this into the expression for $g''(x)$ and simplifying by utilizing the relation $-x R(x) = R'(x) + R(x)^2$, we arrive at the factorization:
{\smaller[0.3]
\[
2g''(x) = (1 + R'(x))(2R'(x) - R(x)^2).
\]
}
Since $R'(x) > -1$, the term $(1+R'(x))$ is strictly positive. Conversely, as $R'(x) < 0$ and $R(x)^2 > 0$, the term $(2R'(x) - R(x)^2)$ is strictly negative. Therefore, $g''(x) < 0$ for all $x \in \mathbb{R}$, proving concavity.

For $\kappa\geq 4$, we can write
{\smaller[0.3]
\[
f(x)=\frac{4}{\kappa}g(x)+\left(1-\frac{4}{\kappa}\right)\log\Phi(x).
\]
}
The function $\log\Phi(x)$ is negative, increasing, and concave, and the coefficients in the above linear combination are nonnegative. Hence $f$ is also negative, increasing, and concave.
\end{proof}

We now focus on proving Proposition~\ref{prop:perfect_probabilities_appendix_blue}, by an induction argument similar to the proof of Proposition~\ref{prop:induction_1}. We now give our induction statement in the form of Proposition~\ref{prop:induction_appendix_blue}. Recall that $C_s$ denotes the event that the vertices $s+1, \dots, r$ form a clique, and that $B^{\blue}_r$ is the event that $\by_1, \dots, \by_r$ form a perfect sequence. Note that setting $s=0$ in the statement of Proposition~\ref{prop:induction_appendix_blue} gives precisely the statement of Proposition~\ref{prop:perfect_probabilities_appendix_blue}, since $r\cdot \binom{r}{2}\leq 4\binom{r}{3}$.

\begin{proposition}\label{prop:induction_appendix_blue}
Under the assumptions of Proposition~\ref{prop:perfect_probabilities_appendix_blue}, let $s, r$ be nonnegative integers such that $1\leq s+1\leq r\leq C\ell$. Suppose that the first $s$ columns of $M$ are fixed, and denote them by $M[s]$. Then
{\smaller[0.3]
\begin{align}
    \mathbb{P}\Big[C_s\wedge B^{\blue}_r\Big| M[s]\Big]\leq (1\!-\!p)^{\binom{r-s}{2}}\exp\bigg(&\mu\sqrt{d}\!\!\!\!\sum_{s<i< j\leq r}\!\!\!\!\langle \pi_s(\by_i), \pi_s(\by_j)\rangle \!+\! \left(\frac{a^3r}{(1\!-\!p)^3\sqrt{d}}\!+\!d^{-\frac{1}{5}}\right)\binom{r\!-\!s}{2} \!+\! (r\!-\!s)\!\bigg), \label{eq:induction_bound_blue_2}
\end{align}
}
where $a=\phi(c_p)$ and $\mu=f_r'(c_p)$.
\end{proposition}
\begin{proof}
We argue by reverse induction on $s$. The base case $s=r$ is trivial. Thus, let us show how to do a step of the induction, from $s$ to $s-1$. We are given the columns $M_1, \dots, M_{s-1}$, and we sample the column $M_s$ by setting all of its entries below the diagonal to be independent Gaussians $\cN(0, \frac{1}{d})$. The vertex $s$ is connected to the vertex $i\in (s, r]$ if $\by_i(s)\geq -b_i/\sqrt{d}$, where 
{\smaller[0.3]
\[b_i=\frac{c_p+\sqrt{d}\langle \pi_{s-1}(\by_i), \pi_{s-1}(\by_s)\rangle}{\by_s(s)}.\]
}
Note that $b_i$ is fully determined by the columns $M_1, \dots, M_{s-1}$ and the diagonal entries. \begin{claim}\label{claim:b_i inequality}
If $B^{\blue}_r$ holds, then
{\smaller[0.3]
\[\big|b_i-c_p-\sqrt{d}\langle \pi_{s-1}(\by_i), \pi_{s-1}(\by_s)\rangle\big|\leq d^{-\frac{1}{5}}.\]
}
\end{claim}
\begin{proof}
By Claim~\ref{claim:diagonal_bound}, we know that $\by_s(s)\in (1-2\delta, 1+\delta)$, and thus $\frac{1}{\by_s(s)}\in (1-2\delta, 1+3\delta)$. So, we can write
{\smaller[0.3]
\begin{align*}
\big|b_i-c_p-\sqrt{d}\langle \pi_{s-1}(\by_i), \pi_{s-1}(\by_s)\rangle\big|&=\big|\big(\frac{1}{\by_s(s)}-1\big)\big(c_p+\sqrt{d}\langle \pi_{s-1}(\by_i), \pi_{s-1}(\by_s)\rangle\big)\big|\\
&\leq 3\delta\big|c_p+\sqrt{d}\langle \pi_{s-1}(\by_i), \pi_{s-1}(\by_s)\rangle\big|.
\end{align*}
}
Also, note that $c_p$ and $\sqrt{d}\langle \pi_{s-1}(\by_i), \pi_{s-1}(\by_s)\rangle$ can be bounded by functions independent of the growing parameter $\ell$, i.e. they can be bounded by $O_{p, C}(1)$. Indeed, the definition of $c_p$ does not depend on $\ell$, and $\langle \pi_{s-1}(\by_i), \pi_{s-1}(\by_s)\rangle\leq \|\pi_{s-1}(\by_i)\|_2\cdot  \|\pi_{s-1}(\by_s)\|_2\leq \frac{\alpha_{\blue}^2\ell}{d}\leq O_{p, C}(\frac{1}{\sqrt{d}})$, due to the definition of the event $B^{\blue}_r$. Finally, since $\delta=\alpha_{\blue} d^{-1/4}=O_{p, C}(d^{-1/4})$, we get that for sufficiently large $\ell$ and $d$
{\smaller[0.3]
\[
\big|b_i-c_p-\sqrt{d}\langle \pi_{s-1}(\by_i), \pi_{s-1}(\by_s)\rangle\big|\leq 3\delta\big|c_p+\sqrt{d}\langle \pi_{s-1}(\by_i), \pi_{s-1}(\by_s)\rangle\big|\leq d^{-\frac{1}{5}}.\qedhere
\]
}
\end{proof}

Since the event $E_{s,i}$ only depends on the random variable $\by_i(s)$, connections $si$ appear independently for all $i\in (s, r]$. Using the cumulative distribution function $\Phi$, the fact that $\by_i(s)\sim \cN(0, \frac{1}{d})$, and that $E_{si}$ holds when $\by_i(s)\geq -b_i/\sqrt{d}$, we can then write
{\smaller[0.3]
\begin{equation}
  \Pb\bigg[\bigwedge_{s<i\leq r}E_{s, i}\bigg|M[s\!-\!1]\bigg]=\prod_{i=s+1}^r \Pb\big[E_{s, i}\big|M[s\!-\!1]\big]=\prod_{i=s+1}^r\Phi(b_i).  
\end{equation}
}

For each outcome of $M_s$, the induction hypothesis provides a bound on the probability of $C_s\wedge B^{\blue}_r$ over the random choice of $M_{s+1}, \dots, M_r$. Thus, in order to perform an induction step, we will integrate this bound over all outcomes of $M_s$ satisfying the event $C_s\wedge B^{\blue}_r$. At this point, it is important to observe that $C_{s-1}=C_{s}\wedge \bigwedge_{s<i\leq r}E_{si}$, i.e. that vertices $s, \dots, r$ form a clique precisely when vertices $s+1, \dots, r$ form a clique and $s$ is connected to all vertices among $s+1, \dots, r$. Hence, by the law of total probability, we can write
{\smaller[0.3]
\begin{align}\label{eq:total_probability_blue_appendix}
\Pb\Big[C_{s\!-\!1}\wedge B^{\blue}_r&\Big| M[s\!-\!1]\Big] =\bE_{M_s}\bigg[\Pb\Big[C_{s}\wedge B^{\blue}_r\Big| M_{s}\Big]\bigg| \bigwedge_{s<i\leq r}E_{s, i}, M[s\!-\!1]\bigg]\Pb\bigg[\bigwedge_{s<i\leq r}E_{s, i}\bigg|M[s\!-\!1]\bigg].
\end{align}
}

Since we have already estimated the latter term, we focus on the first one. We use the induction hypothesis to upper bound $\Pb\Big[C_{s}\wedge B^{\blue}_r\Big| M_{s}\Big]$ as follows
{\smaller[0.3]
\begin{equation}\label{eq:induction_hypothesis_blue_appendix}
    \Pb\Big[C_{s}\wedge B^{\blue}_r\Big| M_{s}\Big]\leq (1\!-\!p)^{\binom{r-s}{2}}\exp\bigg(\mu\sqrt{d}\!\!\sum_{s<i< j\leq r}\!\!\!\!\langle \pi_s(\by_i), \pi_s(\by_j)\rangle + \Big(\frac{a^3r}{(1-p)^3\sqrt d}+d^{-\frac{1}{5}}\Big)\binom{r\!-\!s}{2}\!+r-s\bigg).
\end{equation}
}

Note that most terms on the right-hand side are independent of the entries of $M_s$. In fact, the only terms which depend on it are $\exp\Big(\mu\sqrt{d}\sum_{s<i<j\leq r} \by_i(s)\by_j(s)\Big)$. Thus, we focus on bounding their expectation. The following claim corresponds to Claim~\ref{claim:auxiliary_2} in the proof of Proposition~\ref{prop:induction_1}. However, we cannot bound the left hand side by $\frac{a^3}{(1-p)^3\sqrt{d}}\binom{r-s}{2}+1$. The reason for this is that the thresholds $b_i$ may be relatively far from $c_p$, meaning that $\frac{\phi(b_i)^2}{\Phi(b_i)^2}$ could be somewhat far from $\frac{a^2}{(1-p)^2}$. This does not affect the further flow of the proof, due to the concavity of the function $f_r$, as we will shortly see in Claim~\ref{claim:auxiliary_cdf_appendix}.

\begin{claim}\label{claim:auxiliary_gaussian_appendix_blue}
If the entries of $M_s$ are independently sampled from $\cN(0, \frac{1}{d})$, and $S=\sum_{s<i<j\leq r}\by_i(s)\by_j(s)$, then we have
{\smaller[0.3]
\begin{align}
    \bE_{M_s}\bigg[\exp\bigg(\mu\sqrt{d}S\bigg)\bigg| \bigwedge_{s<i\leq r}\!\! E_{s, i}, M[s\!-\!1]\bigg]\leq  \exp\bigg(\frac{ar}{(1-p)\sqrt{d}}\sum_{i=s+1}^{r}\frac{\phi(b_i)^2}{\Phi(b_i)^2}+1\bigg).
\end{align}
}
\end{claim}
\begin{proof}
Recall that $E_{s, i}$ holds whenever $\by_i(s)\geq -b_i/\sqrt{d}$. Since the diagonal entries are fixed, conditioning on the intersection of $E_{s,i}$ leaves the variables $\by_{s+1}(s), \dots, \by_{r}(s)$ independent. Each $\by_i(s)$ follows the distribution of a lower truncated Gaussian with cutoff $-b_i/\sqrt{d}$.
Truncated Gaussian random variables have variance proxy at most $1/d$. We apply Lemma~\ref{lemma:exponential_moment_special_case} with $\lambda=\mu\sqrt{d}$, $k=r-s$, and $S=\sum_{s<i<j\leq r}\by_i(s)\by_j(s)$ to obtain
{\smaller[0.3]
\[\bE[e^{\lambda S}]\leq \exp\bigg( \lambda \bE[S] \!+\! \frac{\lambda^2k^2}{d}\sum_{j=s+1}^r (\bE \by_j(s))^2 \!+\!\frac{4\lambda k}{d}\bigg).\]
}
We must verify that $d\geq 4\lambda k$. This reduces to $d \geq 4 \mu\sqrt{d}(r-s)$. Since $r-s \leq C\ell$ and $\sqrt{d}=D\ell$, the condition becomes $D \geq 4\mu C$. Observe that it is sufficient to verify that $\mu\leq a/(1-p)$, since we already assumed that $D \geq 4aC/(1-p)\geq 4\mu C$. This is a consequence of a simple calculation as 
{\smaller[0.3]
\begin{align*}
\mu&=2\frac{ar}{(1-p)\sqrt{d}}\frac{\phi(c_p)}{\Phi(c_p)}\frac{\phi'(c_p)\Phi(c_p)-\phi(c_p)\Phi'(c_p)}{\Phi(c_p)^2}+\frac{\Phi'(c_p)}{\Phi(c_p)}\\
&=\frac{\phi(c_p)}{\Phi(c_p)}\Big(1-2\frac{ra}{(1-p)\sqrt{d}}\frac{c_p\phi(c_p)\Phi(c_p)+\phi(c_p)^2}{\Phi(c_p)^2}\Big)\leq \frac{a}{1-p}.
\end{align*}
}
The last term in the exponent is bounded by $1$, as $4\lambda k / d \leq 1$.

Using the independence of $\by_{s+1}(s), \dots, \by_r(s)$, we upper bound $\bE[S]$:
{\smaller[0.3]
\[\bE[S]=\sum_{s<i<j\leq r}\bE[\by_i(s)]\bE[\by_j(s)]\leq \frac{1}{2}\Big(\sum_{s<i\leq r}\bE[\by_i(s)]\Big)^2\leq \frac{k}{2}\sum_{s<i\leq r}\bE[\by_i(s)]^2,\]
}
where we used the inequality between the arithmetic and quadratic mean. Thus, the main terms in the exponent are bounded by:
{\smaller[0.3]
\[\lambda \bE[S] \!+\! \frac{\lambda^2k^2}{d}\sum_{j=s+1}^r (\bE \by_j(s))^2\leq \lambda \frac{k}{2}\Big(1\!+\!\frac{2\lambda k}{d}\Big)\sum_{i=s+1}^r \bE[\by_i(s)]^2.\]
}
By Lemma~\ref{lemma:truncated_expectation}, we have $\bE[\by_i(s)]^2 = \frac{\phi(b_i)^2}{d\Phi(b_i)^2}$. Substituting $\lambda = \mu\sqrt{d}$:
{\smaller[0.3]
\[ \lambda \frac{k}{2}\Big(1\!+\!\frac{2\lambda k}{d}\Big) \frac{1}{d} = \frac{\mu k}{2\sqrt{d}} \left( 1 + \frac{2\mu k}{\sqrt{d}} \right). \]
}
Since $k \le r\le C\ell$ and $D \ge 4\mu C$, we have $\frac{2\mu k}{\sqrt{d}} \le \frac{2\mu r}{D\ell}\leq \frac{1}{2}$. Thus:
{\smaller[0.3]
\[ \frac{\mu k}{2\sqrt{d}} \left( 1 + \frac{2\mu k}{\sqrt{d}} \right)\leq \frac{3}{2}\frac{\mu k}{2\sqrt{d}}\leq \frac{ar}{(1-p)\sqrt{d}}. \]
}
This yields the desired coefficient of $\frac{ar}{(1-p)\sqrt d}$, completing the proof.
\end{proof}

\begin{claim}\label{claim:auxiliary_cdf_appendix}
We have
{\smaller[0.3]
\begin{align*}
\exp\bigg(\frac{ar}{(1-p)\sqrt d}\sum_{i=s\!+\!1}^{r}\frac{\phi(b_i)^2}{\Phi(b_i)^2}&\bigg)\prod_{i=s\!+\!1}^r \Phi(b_i)\leq \\
&\leq (1\!-\!p)^{r\!-\!s} \exp\bigg(\Big(\frac{a^3r}{(1-p)^3\sqrt d}+\!d^{-\frac{1}{5}}\Big)(r\!-\!s)\!+\!\mu\sqrt{d}\sum_{i=s\!+\!1}^r \langle \pi_{s-1}(\by_i), \pi_{s-1}(\by_s)\rangle \bigg).
\end{align*}
}
\end{claim}
\begin{proof}
Recall the function $f_r(x)=\frac{ar}{(1-p)\sqrt d}\frac{\phi(x)^2}{\Phi(x)^2}+\log \Phi(x)$.  Observe that 
{\smaller[0.3]
\[\exp\bigg(\frac{ar}{(1-p)\sqrt d}\sum_{i=s+1}^{r}\frac{\phi(b_i)^2}{\Phi(b_i)^2}\bigg)\prod_{i=s+1}^r \Phi(b_i)=\prod_{i=s+1}^r e^{f_r(b_i)}.\]
}
By Lemma~\ref{lemma:function_analysis}, applied with $\kappa=\frac{(1-p)\sqrt d}{ar}\geq 4$, the function $f_r$ is concave, so $f_r(x)\leq f_r(c_p)+f_r'(c_p)(x-c_p)$. We have $f_r(c_p) = \frac{ar}{(1-p)\sqrt d}\frac{\phi(c_p)^2}{\Phi(c_p)^2} + \log\Phi(c_p) = \frac{a^3r}{(1-p)^3\sqrt d} + \log(1-p)$. Also, $f_r'(c_p)=\mu$.
{\smaller[0.3]
\begin{align*}
\sum_{i=s+1}^r f_r(b_i) &\leq \sum_{i=s+1}^r \left( f_r(c_p) + \mu(b_i - c_p) \right) = (r-s)\left(\log(1-p) + \frac{a^3r}{(1-p)^3\sqrt d}\right) + \mu \sum_{i=s+1}^r (b_i - c_p).
\end{align*}
}
Using the bound $|b_i - c_p - \sqrt{d}\langle \pi_{s-1}(\by_i), \pi_{s-1}(\by_s)\rangle| \leq d^{-1/5}$, we substitute for $b_i - c_p$:
{\smaller[0.3]
\[ \mu \sum_{i=s+1}^r (b_i - c_p) \leq \mu \sum_{i=s+1}^r \left( \sqrt{d}\langle \pi_{s-1}(\by_i), \pi_{s-1}(\by_s)\rangle + d^{-1/5} \right). \]
}
Combining these terms and using that $\mu\leq 1$ gives the result.
\end{proof}

We now combine the claims to finish the proof of Proposition~\ref{prop:induction_appendix_blue}. Plugging the result of Claim~\ref{claim:auxiliary_gaussian_appendix_blue} into the expectation in (\ref{eq:total_probability_blue}), and then applying Claim~\ref{claim:auxiliary_cdf_appendix}, we obtain:
{\smaller[0.3]
\begin{align*}
\Pb\Big[C_{s\!-\!1}\wedge B^{\blue}_r\Big| M[s\!-\!1]\Big] &\leq (1\!-\!p)^{r-s} \exp\bigg(  \mu\sqrt{d} \sum_{i=s\!+\!1}^r \langle \pi_{s-1}(\by_i), \pi_{s-1}(\by_s) \rangle \!+\!\Big(\frac{a^3r}{(1-p)^3\sqrt d}\!+\!d^{-\frac{1}{5}}\Big)(r\!-\!s) \!+\!1\bigg)\\
\cdot (1\!-\!p)^{\binom{r-s}{2}} &\exp\bigg( \mu\sqrt{d} \sum_{s<i<j\leq r} \langle \pi_{s-1}(\by_i), \pi_{s-1}(\by_j) \rangle  \!+\! \Big(\frac{a^3r}{(1-p)^3\sqrt d}\!+\!d^{-1/5}\Big)\binom{r\!-\!s}{2} \!+\! (r\!-\!s) \bigg).
\end{align*}
}
The inner product terms merge to form the sum over $s-1 < i < j \le r$. The leading terms combine as:
{\smaller[0.3]
\[ \left(\frac{a^3r}{(1-p)^3\sqrt d} + d^{-1/5}\right) \binom{r-s}{2} + \left(\frac{a^3r}{(1-p)^3\sqrt d} + d^{-1/5}\right) (r-s) = \left(\frac{a^3r}{(1-p)^3\sqrt d} + d^{-1/5}\right) \binom{r-s+1}{2}. \]
}
Finally, the linear terms are simply $(r-s) + 1$, which is the required linear error term for the step $s-1$. This completes the induction step.
\end{proof}

\subsection{Red cliques.} Let us now give an upper bound on $P_{\red, r}^*$, which corresponds to Proposition~\ref{prop:perfect_probabilities_appendix_blue}.

\begin{proposition}\label{prop:perfect_probabilities_appendix_red}
Let $C>1, p\in (0, 1)$ be real numbers and $\ell, d=D^2\ell^2$ be sufficiently large positive integers, as a function of $C$ and $p$. Additionally, assume that $D/\sqrt{\log p^{-1}}$ is a sufficiently large absolute constant. Then, for every integer $3\leq r\leq \ell$, we have
{\smaller[0.3]
\begin{align}
    P_{\red, r}^*\leq p^{\binom{r}{2}}\exp\bigg(-\Big(1+O\Big(\frac{\sqrt{\log p^{-1}}}{D}\Big)\Big)\frac{a^3}{p^3\sqrt{d}}\binom{r}{3}+d^{-1/5}\binom{r}{2}+r\bigg),\label{eq:perfect_probabilities_red_appendix}
\end{align}
}
where $a=\phi(c_p)$.
\end{proposition}

Note that the linear and quadratics terms from in the statement are immaterial - they are only present for bookkeeping purposes and can be absorbed into the error term $O\big(\frac{\sqrt{\log p^{-1}}}{D}\binom{r}{3}\big)$ when $r\approx \sqrt{d}$. As before, we derive this proposition by induction, with the general induction statement given in the following lemma. Recall, $I_s$ denotes the event that the vertices $s+1, \dots, r$ form a red clique (or equivalently, an independent set in the context of Definition~\ref{def:RGG}). 

\begin{proposition}\label{prop:induction_appendix_red}
Under the assumptions of Proposition~\ref{prop:perfect_probabilities_appendix_red}, let $s, r$ be nonnegative integers such that $1\leq s+1\leq r\leq \ell$. Suppose that the first $s$ columns of $M$ are fixed, and denote them by $M[s]$. Then
{\smaller[0.3]
\begin{align}
    \mathbb{P}\Big[I_s\wedge B^{\red}_r\Big| M[s]\Big]\!\leq p^{\binom{r-s}{2}}\exp\bigg(-\frac{a\sqrt{d}}{p}\sum_{s<i< j\leq r}\langle \pi_s(\by_i), \pi_s(\by_j)\rangle -\Big(1+O\big(& \frac{\sqrt{\log p^{-1}}}{D}\big)\Big)\frac{a^3}{p^3\sqrt{d}}\binom{r-s}{3} \notag\\
    &+\frac{a}{p}d^{-1/5}\binom{r-s}{2} +(r-s)\bigg). \label{eq:induction_bound_red_2_appendix}
\end{align}
}
\end{proposition}
\begin{proof}
We argue by reverse induction on $s$. The base case $s=r$ is trivial. Thus, let us show how to do a step of the induction, from $s$ to $s-1$. We are given the columns $M_1, \dots, M_{s-1}$, and we sample the column $M_s$ by setting all of its entries below the diagonal to be independent Gaussians $\cN(0, \frac{1}{d})$. The vertex $s$ forms a red edge to the vertex $i\in (s, r]$ if $\by_i(s)\leq -b_i/\sqrt{d}$, where 
{\smaller[0.3]
\[b_i=\frac{c_p+\sqrt{d}\langle \pi_{s-1}(\by_i), \pi_{s-1}(\by_s)\rangle}{\by_s(s)}.\]
}
Note that $b_i$ is fully determined by the columns $M_1, \dots, M_{s-1}$ and the diagonal entries. Also, by essentially the same argument as in Claim~\ref{claim:b_i inequality}, we have that if $B^{\red}_r$ holds then
{\smaller[0.3]
\[\big|b_i-c_p-\sqrt{d}\langle \pi_{s-1}(\by_i), \pi_{s-1}(\by_s)\rangle\big|\leq d^{-\frac{1}{5}}.\]
}

Since the event $\overline{E}_{s,i}$ only depends on the random variable $\by_i(s)$, the colors of the edges $si$ are independent for all $i\in (s, r]$. Using the cumulative distribution function $\Phi$, the fact that $\by_i(s)\sim \cN(0, \frac{1}{d})$, and that $E_{si}$ holds when $\by_i(s)\geq -b_i/\sqrt{d}$, we can then write
{\smaller[0.3]
\begin{align}
  \Pb\bigg[\bigwedge_{s<i\leq r}\overline{E}_{s, i}\bigg|M[s\!-\!1]\bigg]&=\prod_{i=s+1}^r \Pb\big[\,\overline{E}_{s, i}\big|M[s\!-\!1]\big]
  =\prod_{i=s+1}^r\Phi(-b_i)
  \leq p^{r-s}\prod_{i=s+1}^r e^{-\frac{a}{p}(b_i-c_p)}\\
  &\leq p^{r-s}\exp\Big(-\frac{a}{p}\sum_{i=s+1}^r\sqrt d\langle\pi_{s-1}(\by_i),\pi_{s-1}(\by_s)\rangle+\frac{a}{p}(r-s)d^{-1/5}\Big).\notag  
\end{align}
}
In the last inequality, we have used that $\Phi(x)$ is log-concave (see (\ref{eqn:log_concavity})).

For each outcome of $M_s$, the induction hypothesis provides a bound on the probability of $I_s\wedge B^{\red}_r$ over the random choice of $M_{s+1}, \dots, M_r$. Thus, in order to perform an induction step, we will integrate this bound over all outcomes of $M_s$ satisfying the event $I_s\wedge B^{\red}_r$. At this point, it is important to observe that $I_{s-1}=I_{s}\wedge \bigwedge_{s<i\leq r}\overline{E}_{si}$, i.e. that vertices $s, \dots, r$ form a red clique precisely when vertices $s+1, \dots, r$ form a red clique and $s$ has a red edge to all vertices among $s+1, \dots, r$. Hence, by the law of total probability, we can write
{\smaller[0.3]
\begin{align}\label{eq:total_probability_red}
\Pb\Big[I_{s -\!1}\wedge B^{\red}_r&\Big| M[s\!-\!1]\Big] =\bE_{M_s}\bigg[\Pb\Big[I_{s}\wedge B^{\red}_r\Big| M_{s}\Big]\bigg| \bigwedge_{s<i\leq r}\overline{E}_{s, i}, M[s\!-\!1]\bigg]\Pb\bigg[\bigwedge_{s<i\leq r}\overline{E}_{s, i}\bigg|M[s\!-\!1]\bigg].
\end{align}
}

Since we have already estimated the latter term, we focus on the first one. We use the induction hypothesis to upper bound $\Pb\Big[I_{s}\wedge B^{\red}_r\Big| M_{s}\Big]$ as follows
{\smaller[0.3]
\begin{align}\label{eq:induction_hypothesis_red}
    \Pb\Big[I_{s}\wedge B^{\red}_r\Big| M_{s}\Big]\leq p^{\binom{r-s}{2}}\exp\bigg(- \frac{a\sqrt{d}}{p}\!\!\sum_{s<i< j\leq r}\!\!\!\!\langle \pi_s(\by_i), \pi_s(\by_j)\rangle - \Big(1+&O\big( \frac{\sqrt{\log p^{-1}}}{D}\big)\Big)\frac{a^3}{p^3\sqrt{d}}\binom{r\!-\!s}{3}\! \\
    &+\frac{a}{p}d^{-1/5}\binom{r-s}{2}+(r-s)\bigg).\notag
\end{align}
}
Note that most terms on the right-hand side are independent of the entries of $M_s$. In fact, the only terms which depend on it are $\exp\Big(- \frac{a\sqrt{d}}{p}\sum_{s<i<j\leq r} \by_i(s)\by_j(s)\Big)$. Thus, we focus on bounding their expectation.

\begin{claim}\label{claim:auxiliary_gaussian_appendix_red}
If the entries of $M_s$ are independently sampled from $\cN(0, \frac{1}{d})$, and $S=\sum_{s<i<j\leq r}\by_i(s)\by_j(s)$ then we have
{\smaller[0.3]
\begin{align}
    \bE_{M_s}\bigg[\exp\bigg(- \frac{a\sqrt{d}}{p}S\bigg)\bigg| \bigwedge_{s<i\leq r}\!\! \overline{E}_{s, i}, M[s\!-\!1]\bigg]\leq  \exp\bigg(-\Big(1+O\big( \frac{\sqrt{\log p^{-1}}}{D}\big)\Big)\frac{a^3}{p^3\sqrt{d}}\binom{r-s}{2}+1\bigg).
\end{align}
}
\end{claim}
\begin{proof}
Recall that $\overline{E}_{s, i}$ holds whenever $\by_i(s)\leq -b_i/\sqrt{d}$. Since the diagonal entries are fixed, conditioning on the intersection of $\overline{E}_{s, i}$ leaves the variables $\by_{s+1}(s), \dots, \by_{r}(s)$ independent. Each $\by_i(s)$ follows the distribution of an upper truncated Gaussian with cutoff $-b_i/\sqrt{d}$.
Truncated Gaussian random variables have variance proxy at most $1/d$. We apply Lemma~\ref{lemma:exponential_moment_special_case} with $\lambda=-\frac{a\sqrt{d}}{p}$, $k=r-s$, and $S=\sum_{s<i<j\leq r}\by_i(s)\by_j(s)$ to obtain
{\smaller[0.3]
\[\bE[e^{\lambda S}]\leq \exp\bigg( \lambda \bE[S] \!+\! \frac{\lambda^2k^2}{d}\sum_{j=s+1}^r \bE [\by_j(s)]^2 \!+\!\frac{4|\lambda| k}{d}\bigg).\]
}
We must verify that $d\geq 4|\lambda| k$. This reduces to $d \geq 4  \frac{a\sqrt{d}}{p}(r-s)$. Since $r-s \leq \ell$ and $\sqrt{d}=D\ell$, the condition becomes $D \geq 4 \frac{a}{p}$. This is true, since we have assumed that $D/\sqrt{\log p^{-1}}$ is at least a large absolute constant and $\frac{a}{p}=(1+o(1))\sqrt{2\log p^{-1}}$ as $p\to 0$. So, the last term in the exponent is bounded by $1$, as $4|\lambda| k / d \leq 1$.

Next, to evaluate $\bE [\by_j(s)]$, we apply Lemma~\ref{lemma:truncated_expectation} to write
{\smaller[0.3]
\[\bE [\by_j(s)]=\frac{-\phi(c_p)}{\sqrt{d}\Phi(-c_p)}+O(\frac{|c_p-b_i|}{\sqrt{d}})=\frac{-a}{p\sqrt{d}}+O\Big(|\langle \pi_{s-1}(\by_j), \pi_{s-1}(\by_s)\rangle|+d^{-1/5-1/2}\Big).\]
}
Since $|\langle \pi_{s-1}(\by_j), \pi_{s-1}(\by_s)\rangle|\leq \|\pi_{s-1}(\by_j)\|\cdot \|\pi_{s-1}(\by_s)\|\leq \frac{\alpha_{\red}^2\ell}{d}\leq O(\frac{\log p^{-1}}{D\sqrt{d}})$ by the Cauchy-Schwarz inequality, we have $\bE [\by_j(s)]=-\frac{a}{p\sqrt{d}}+O\big(\frac{\log p^{-1}}{D\sqrt{d}}\big)$, so $\bE [\by_j(s)]^2\leq \frac{2a^2}{p^2d}$. 

Thus, the middle term in the exponent is bounded by
{\smaller[0.3]
\begin{align*}
\frac{\lambda^2k^2}{d}\!\!\!\sum_{j=s+1}^r \bE [\by_j(s)]^2\!\leq \frac{a^2k^2}{p^2}\cdot \ell\frac{2a^2}{p^2d}\leq \frac{a^4}{p^4D\sqrt{d}}k^2\!=\!O\Big( \frac{\sqrt{\log p^{-1}}}{D}\Big)\frac{a^3}{p^3\sqrt{d}}\binom{k}{2}\!=\!O\Big( \frac{\sqrt{\log p^{-1}}}{D}\Big)\frac{a^3}{p^3\sqrt{d}}\binom{r\!-\!s}{2},
\end{align*}
}
where we used that $a/p=\Theta((\log p^{-1})^{1/2})$ and $r-s\leq \ell$. Finally, by Lemma~\ref{lemma:truncated_expectation}
{\smaller[0.3]
\begin{align*}
\lambda \bE[S]&=-\frac{a\sqrt{d}}{p}\sum_{s<i<j\leq r}\frac{\phi(b_i)}{\sqrt{d}\Phi(-b_i)}\frac{\phi(b_j)}{\sqrt{d}\Phi(-b_j)}\\
&=-\frac{a}{p\sqrt{d}}\sum_{s<i<j\leq r}\Big(\frac{a}{p}+O\big(\frac{\log p^{-1}}{D}\big)\Big)\Big(\frac{a}{p}+O\big(\frac{\log p^{-1}}{D}\big)\Big)=-\frac{a}{p\sqrt{d}}\sum_{s<i<j\leq r}\bigg(\frac{a^2}{p^2}+O\Big(\frac{a(\log p^{-1})}{pD}\Big)\bigg)\\
    &=-\frac{a^3}{p^3\sqrt{d}}\binom{r-s}{2}+O\Big(\frac{a^2(\log p^{-1})}{p^2D\sqrt{d}}\binom{r-s}{2}\Big)=-\Big(1+O\big( \frac{\sqrt{\log p^{-1}}}{D}\big)\Big)\frac{a^3}{p^3\sqrt{d}}\binom{r-s}{2}
\end{align*}
}
In the last line, we have used that $a/p=\Theta((\log p^{-1})^{1/2})$.
\end{proof}

We now combine the claims to finish the proof of Proposition~\ref{prop:induction_appendix_red}. Plugging the result of Claim~\ref{claim:auxiliary_gaussian_appendix_red} into the expectation in (\ref{eq:total_probability_red}), we obtain:
{\smaller[0.3]
\begin{align*}
\Pb\Big[I_{s-1}\wedge B^{\red}_r\Big| M[s-1]\Big] \leq p^{\binom{r-s}{2}} \exp\bigg( &- \frac{a\sqrt{d}}{p} \sum_{s<i<j\leq r} \langle \pi_{s-1}(\by_i), \pi_{s-1}(\by_j) \rangle  \\
- \Big(1+&O\big( \frac{\sqrt{\log p^{-1}}}{D}\big)\Big) \frac{a^3}{p^3\sqrt{d}}\binom{r-s}{3} +\frac{a}{p}d^{-1/5}\binom{r-s}{2}+(r-s) \bigg) \\
 \cdot p^{r-s} \exp\bigg(\!\! -\!\! \Big(1\!+\!O\big(\! \frac{\sqrt{\log p^{-1}}}{D}\big)\!\Big)\frac{a^3}{p^3\sqrt{d}}&\binom{r\!-\!s}{2} \! -\!\frac{a\sqrt{d}}{p} \!\!\!\sum_{i=s+1}^r\!\! \langle \pi_{s-1}(\by_i), \pi_{s-1}(\by_s) \rangle \!+\!\frac{a}{p}d^{-1/5}(r\!-\!s)\!+\!1\!\bigg).
\end{align*}
}
The inner product terms merge to form the sum over $s-1<  i < j \le r$. The main terms combine as:
{\smaller[0.3]
\[ \Big(1+O\big( \frac{\sqrt{\log p^{-1}}}{D}\big)\Big)\frac{a^3}{p^3\sqrt{d}}\binom{r\!-\!s}{2}+ \Big(1+O\big( \frac{\sqrt{\log p^{-1}}}{D}\big)\Big) \frac{a^3}{p^3\sqrt{d}}\binom{r\!-\!s}{3}\!=\!\Big(1+O\big( \frac{\sqrt{\log p^{-1}}}{D}\big)\Big) \frac{a^3}{p^3\sqrt{d}}\binom{r\!-\!s\!+\!1}{3}\]
}
Finally, the linear and quadratic terms also combine to give the required error terms for the step $s-1$. This completes the induction step.
\end{proof}

\subsection{Bookkeeping to obtain the final bounds.}

In this section, the goal is to derive Theorem~\ref{thm:main_appendix} from Propositions~\ref{prop:perfect_probabilities_appendix_blue} and~\ref{prop:perfect_probabilities_appendix_red}. This will be done in a very similar fashion as we derived Theorem~\ref{thm:main} in the introduction, just with slightly different constants.

Before we start, we will select an appropriate value of $D$. To ensure that the assumptions of the Proposition~\ref{prop:perfect_probabilities_appendix_blue} are satisfies, we set $D= 4aC/(1-p)$. It is worth noting that, once we set $p=p_C+\frac{1}{C}$, this gives $D=(4\sqrt{2}+o(1))(\log p^{-1})^{3/2}$ as $C\to\infty$, since $C=(1+o(1))p_C^{-1} \log p_C^{-1}$ and $a=(1+o(1))p\sqrt{\log p^{-1}}$.

\begin{lemma}\label{lemma:bookkeeping_appendix}
Let $C>1$ be a sufficiently large constant and let $p_C\in (0, 1/2)$ be the unique solution to $C=\frac{\log p_C}{\log (1-p_C)}$. Then, if $p=p_C+\frac{1}{C}$ and $D=4\frac{aC}{1-p}$, then for every sufficiently large $\ell$ the clique probabilities in $G(n, d, p)$  with $d=D^2\ell^2$ are bounded by
{\smaller[0.3]
\[P_{{\rm red}, \ell}\leq \Big(e^{-1/10}p_C\Big)^{\binom{\ell}{2}} \text{ and } P_{{\rm blue}, C\ell}\leq \Big(e^{-1/3C}(1-p_C)\Big)^{\binom{C\ell}{2}}.\]
}
\end{lemma}
\begin{proof}[Proof of Lemma~\ref{lemma:bookkeeping_appendix}.]
In this appendix, we have focused on bounding the probabilities $P_{\red, r}^*, P_{\blue, r}^*$, but the same bounds apply for their non-perfect counterparts. Indeed, the argument from Section~\ref{sec:perfect} only uses that the probability of violating the relevant perfectness condition is negligible compared to the target clique probability; the choices of $\alpha_{\red}$ and $\alpha_{\blue}$ were made precisely to ensure this for red cliques of order at most $\ell$ and blue cliques of order at most $C\ell$. By using that $\frac{\ell}{3}\binom{\ell}{2}\geq \binom{\ell}{3}$ and that $\sqrt{d}=D\ell$, we can transform the corresponding bounds to
{\smaller[0.3]
\begin{align*}
P_{\red, \ell}&\leq p^{\binom{\ell}{2}}\exp\Big(-\Big(1+O\Big(\frac{\sqrt{\log p^{-1}}}{D}\Big)\Big)\frac{a^3}{p^3\sqrt{d}} \binom{\ell}{3}\Big)\leq p^{\binom{\ell}{2}}\exp\Big(-(1+o(1))\frac{a^3}{3p^3D}\binom{\ell}{2}\Big),\\
P_{\blue, C\ell}&\leq (1-p)^{\binom{C\ell}{2}}\exp\Big((1+o(1))\frac{4a^3}{(1-p)^3\sqrt{d}}\binom{C\ell}{3}\Big)\leq (1-p)^{\binom{C\ell}{2}}\exp\Big(\frac{(4+o(1))a^3C}{3(1-p)^3D}\binom{C\ell}{2}\Big),
\end{align*}
}
where $o(1)\to 0$ as $C\to \infty$ and then $\ell\to\infty$.

Let us now plug in $p=p_C+\frac{1}{C}$, $D=4aC/(1-p)$ to obtain
{\smaller[0.3]
\begin{align*}
P_{\red, \ell}&\leq p^{\binom{\ell}{2}}\exp\Big(-(1+o(1))\frac{a^3}{3p^3D}\binom{\ell}{2}\Big)\leq \Big((p_C+1/C) \exp\Big(-(1+o(1))\frac{(2\log p^{-1})^{3/2}}{3\cdot 4\sqrt{2}(\log p^{-1})^{3/2}}\Big)\Big)^{\binom{\ell}{2}}\\
&=\Big((p_C+1/C) e^{-1/6+o(1)}\Big)^{\binom{\ell}{2}}\leq \Big(p_Ce^{-1/10}\Big)^{\binom{\ell}{2}},
\end{align*}
}
where we have used that $1/C\ll p_C$ when $C\to \infty$.

On the other hand, we have 
{\smaller[0.3]
\begin{align*}
P_{\blue, C\ell}&\leq (1-p_C-1/C)^{\binom{C\ell}{2}}\exp\Big(\frac{(4+o(1))a^3C}{3(1-p)^3D}\binom{C\ell}{2}\Big).
\end{align*}
}
We now use that $1-p_C-1/C=(1-p_C)\big(1-\frac{1}{C(1-p_C)}\big)\leq (1-p_C)\exp\big(-\frac{1}{C(1-p_C)}\big)$.
Hence,
{\smaller[0.3]
\begin{align*}
P_{\blue, C\ell}&\leq \Big((1-p_C)\exp\Big(-\frac{1}{C(1-p_C)}+\frac{(4+o(1))a^3C}{3(1-p)^3D}\Big)\Big)^{\binom{C\ell}{2}}.
\end{align*}
}
Finally, observe that $\frac{(4+o(1))a^3C}{3(1-p)^3D}\leq \frac{a^2}{3(1-p)^2}=O(p^2 \log p^{-1})=o(1/C)$, and therefore, for sufficiently large $C$ we have 
{\smaller[0.3]
\begin{align*}
P_{\blue, C\ell}&\leq \Big((1-p_C)e^{-\frac{1}{3C}}\Big)^{\binom{C\ell}{2}}.\qedhere
\end{align*}
}
\end{proof}

\begin{proof}[Proof of Theorem~\ref{thm:main_appendix}.]
By Lemma~\ref{lemma:bookkeeping_appendix}, for $p=p_C+\frac{1}{C}$ and $D=4\frac{aC}{1-p}$, the clique probabilities in $G(n, d, p)$ (where $d=D^2\ell^2$) satisfy
{\smaller[0.3]
\[P_{{\rm red}, \ell}\leq \Big(e^{-1/10}p_C\Big)^{\binom{\ell}{2}} \text{ and } P_{{\rm blue}, C\ell}\leq \Big(e^{-1/3C}(1-p_C)\Big)^{\binom{C\ell}{2}}.\]
}
Also, set $n=(e^{1/24}p_C^{-1/2})^\ell$, and let us show that the red/blue coloring of $K_n$ defined using the Gaussian random geometric graph has no red $\ell$-clique and no blue $C\ell$-clique with high probability.

By the union bound the probability that this coloring has a red $\ell$-clique is at most
{\smaller[0.3]
\[\binom{n}{\ell}\cdot P_{\red, \ell}\leq \frac{n^{\ell}}{\ell!}(e^{-1/10}p_C)^{\frac{\ell(\ell-1)}{2}}\leq \frac{1}{\ell!}\Big(ne^{-(\ell-1)/20}p_C^{\frac{\ell-1}{2}}\Big)^\ell\leq \frac{1}{\ell!}\Big(p_C^{-\ell/2}e^{\ell/24}e^{-(\ell-1)/20}p_C^{\frac{\ell-1}{2}}\Big)^\ell.\]
}
The expression inside the parentheses is $p_C^{-1/2}\exp(\ell/24-\ell/20+1/20)$, which is at most $1$ once $\ell$ is sufficiently large as a function of $C$. Hence, the probability that there exists a red $\ell$-clique is at most $\frac{1}{\ell!}$.

Similarly, the probability that the coloring contains a blue $C\ell$-clique is at most
{\smaller[0.3]
\[\binom{n}{C\ell}\cdot P_{\blue, C\ell}\leq \frac{n^{C\ell}}{(C\ell)!}\big(e^{-1/3C}(1-p_C)\big)^{\frac{C\ell(C\ell-1)}{2}}\leq \frac{1}{(C\ell)!}\Big(ne^{-(C\ell-1)/6C}(1-p_C)^{\frac{C\ell-1}{2}}\Big)^{C\ell}.\]
}
Again, using $(1-p_C)^C=p_C$, the expression inside the parentheses is at most $\exp(\ell/24-\ell/6+O(1))$, which is at most $1$ once $\ell$ is sufficiently large as a function of $C$. Hence, the probability that there exists a blue $C\ell$-clique is at most $\frac{1}{(C\ell)!}$. This shows that there is an outcome without red $\ell$-cliques or blue $C\ell$-cliques, completing the proof.
\end{proof}

\end{document}